\newcommand{\Kappa}{\mathcal K}
\renewcommand{\leq}{\leqslant}
\renewcommand{\geq}{\geqslant}
\newtheorem{theorem}{Theorem}
\newtheorem{corollary}[theorem]{Corollary}
\newtheorem{proposition}[theorem]{Proposition}
\newtheorem{lemma}[theorem]{Lemma}
\newtheorem{definition}[theorem]{Definition}
\newcommand{\card}{{\mathbf{\tiny \#}}}
\begin{document}

\title[Homogenization of the Stokes problem]{On the homogenization of the Stokes problem in a perforated domain}
\author{ M. Hillairet}
\address{Institut Montpelli\'erain Alexander Grothendieck, Universit\'e de Montpellier, Place Eug\`ene Bataillon, 34095 Montpellier Cedex 5 France}
\email{matthieu.hillairet@umontpellier.fr}
\date{\today}

\maketitle

\begin{abstract}
We consider the Stokes equations on a bounded perforated domain completed with non-zero
constant boundary conditions on the holes. We investigate configurations for which the holes are identical spheres and their number $N$ goes to infinity while 
their radius $a^N$ tends to zero.  Under the assumption that $a^N$ scales like $a/N$ and that there is no concentration in the distribution of holes, we prove that the solution is well approximated asymptotically by solving
a Stokes-Brinkman problem.
\end{abstract}

\medskip

\section{Introduction}
Let $\Omega$ be a smooth bounded domain in $\mathbb R^3.$ Given $N \in \mathbb N,$ let
$a^N > 0,$ $(h_1^N,\ldots,h_{N}^N)$   in $\Omega,$ such that the $B_i^{N} = B(h_{i}^N,a^N)$  satisfy 
\begin{equation} \label{eq_ass0} \tag{A0}
B_i^N  \Subset \Omega\,,   \qquad \overline{B_i^N} \cap \overline{B_j^N} = \emptyset\,, \qquad \text{ for  $i \neq j$  in $\{1, \ldots, N\}\,,$}
\end{equation}
and consider a $N$-uplet $(v_i^{N})_{i=1,\ldots,N} \in (\mathbb R^3)^{N}.$ It is classical that there exists a unique solution to  
\begin{equation} \label{eq_stokesN}
\left\{
\begin{array}{rcl}
- \Delta u + \nabla p &=& 0\,, \, \\
{\rm div} \, u &= & 0 \,,
\end{array}
\right.
\quad \text{ on $\mathcal F^{N} := \Omega \setminus \bigcup_{i=1}^N \overline{B_i^N}$}\,,
\end{equation}
completed with boundary conditions 
\begin{equation} \label{cab_stokesN}
\left\{
\begin{array}{rcll}
u &=& v_i^N  \,, &  \text{on $\partial  B_i^{N}$} \,, \quad \forall \, i =1,\ldots,N\,, \\
u &=& 0 \,, & \text{on $\partial \Omega$}\,.
\end{array}
\right.
\end{equation}
We are interested here in the behavior of this solution when $N$ goes to infinity and the asymptotics of the data 
$(h_i^N,v_i^N)_{i=1,\ldots,N}$ are given. 

\medskip

The closely related problem of periodic homogenization of the Stokes equations in a bounded domain perforated by tiny holes
is considered in  \cite{Allaire}. It is proven therein that there exists a critical value of the ratio between the size of the holes and their minimal distance for which 
the homogenized problem is a Stokes-Brinkman problem. 
If the holes are "denser" the homogenized problem is of Darcy type while if the holes are "more dilute" one obtains again a Stokes problem. 
This former result is an adaptation to the Stokes equations of a previous analysis on the Laplace equation in \cite{CioranescuMurat}. 
We  refer the reader to \cite{BonLacMas15,FeireislYong,LacMas16} for a review of equivalent results for other fluid models. 

In \cite{Allaire}, the Stokes equations are completed with vanishing boundary conditions while a volumic source term is added in the bulk. 
The very problem that we consider herein \eqref{eq_stokesN}-\eqref{cab_stokesN}, with non-zero constant boundary  conditions,  is introduced in \cite{DGR} 
 for the modeling of a thin spray in a highly viscous fluid. In this case, the holes represent droplets of another phase called "dispersed phase". 
 This phase can be made of another fluid or  small rigid spheres. The Stokes equations should then be completed with evolution equations for this dispersed phase yielding a time-evolution 
problem with moving holes.  With this application in mind, computing the asymptotics of the stationary Stokes problem \eqref{eq_stokesN}-\eqref{cab_stokesN}
is a tool for understanding the instantaneous response of the dispersed phase to the drag forces exerted by 
the  flow on the droplets/spheres. We refer the reader to \cite{DGR,JP} for more details on the modeling. 
In \cite{DGR}, the authors adapt the result of \cite{Allaire} on the derivation of the Stokes-Brinkman system. 
We emphasize that there is a significant new difficulty in introducing non-vanishing boundary conditions. Indeed, the boundary conditions
on the holes may be highly oscillating (when jumping from one hole to another). Hence, if one was trying to compute the homogenized system for \eqref{eq_stokesN}-\eqref{cab_stokesN} by lifting the boundary conditions, it would introduce
a highly oscillating source term in the Stokes equations that is out of the scope of the analysis in \cite{Allaire}.

The result in \cite{DGR} is obtained under the assumption that $a^N=1/N$ and that the distance between two centers $h_i^N$ and $h_j^N$ is larger than $2/N^{1/3}.$ 
The first assumption is natural since, as explained in this reference, it implies that the collective repulsion force applied by the holes on the fluid is of order one.
On the other hand, the second assumption is quite restrictive. Indeed, first, if one were choosing the centers $(h_i^N)_{i=1,\ldots,N}$ randomly as in \cite{Rubinstein}, the set containing such configurations would be
asymptotically negligible. The second limitation appears in the classical case where the holes are rigid particles moving according to Newton laws.
Indeed, in this time-dependant case, even if the particles are distibuted initially so that their centers are sufficiently distant, it is likely that this condition on the minimum distance is bound to be broken instantaneously,  except if the initial velocities of the particles are correlated with the initial positions of their centers.  Our main motivation in this paper is to provide another approach that may help to overcome these difficulties. 

\medskip

In order to consider the limit $N\to \infty$, we make now  precise the different assumptions on the data of our Stokes problem
\eqref{eq_stokesN}-\eqref{cab_stokesN}. This includes: 
\begin{itemize}
\item the positions of the centers  $(h_i^{N})_{i=1,\ldots,N},$\\[-8pt]
\item the velocities prescribed on the holes $(v_i^N)_{i=1,\ldots,N}.$
\end{itemize}
First, similarly to \cite{DGR}, we consider data so that:
\begin{equation}
\dfrac{1}{N} \sum_{i=1}^N |v^N_i|^2  \text{ is uniformly bounded}\,. \label{eq_ass1} \tag{A1}\\
\end{equation}
We name such configurations "finite-energy." Indeed, the "energy" associated with solving the Stokes problem \eqref{eq_stokesN}  is what is also called the "dissipation" in the time-evolution case: 
$$
\int_{\mathcal F^N} |\nabla u|^2.
$$
We shall show that the assumption \eqref{eq_ass1} (with \eqref{eq_ass_dmin} below) entails that this energy is bounded independently of $N.$

\medskip

Second, we introduce the empirical measures:
$$
S_N = \dfrac{1}{N} \sum_{i=1}^{N} \delta_{h^N_i,v^N_i} \in \mathbb P(\mathbb R^3 \times \mathbb R^3),
$$
where $\delta_{h,v}$ denotes the Dirac mass centered in $(h,v) \in \mathbb R^3 \times \mathbb R^3,$ and we assume:
\begin{align}
\int_{\mathbb R^3} S_N({\rm d}v ) \rightharpoonup \rho(x){\rm d}x  & \text{ weakly in the sense of measures on $\mathbb R^3\,,$} \label{eq_ass4} \tag{A2} \\
\int_{\mathbb R^3} v S_N({\rm d}v) \,  \rightharpoonup j(x){\rm d}x  & \text{ weakly in the sense of (vectorial-)measures on $\mathbb R^3\,.$} \label{eq_ass5}  \tag{A3}
\end{align}
We recall that, by assumption \eqref{eq_ass0}, the measure $S_N$ is supported in $\Omega \times \mathbb R^3$ so that, in the weak limit, $\rho \geq 0$
and $\rho$ and $j$ have support included in $\Omega.$ 

\medskip

As in \cite{Allaire,DGR}, we also make precise the size of the holes and the dilution regime that we consider. To quantify this, 
we introduce:
\begin{align*}
& d_{min}^N = \min_{i=1,...,N} \left\{ \text{dist}(h^N_i, \partial \Omega) ,  \min_{j\neq i} |h_i^N- h_j^N| \right\}\,.
\end{align*}
First, we assume that the radii $a^N$ scale like $a/N$ and that the holes do not see each other at their own scale:
\begin{equation}\tag{A4}
 \lim_{N\to \infty} Na^N = a >0 ,  \qquad \quad \lim_{N \to \infty} {N d_{min}^N}  = +\infty\,.   \label{eq_ass_dmin} 
\end{equation}
Second, we assume that there exists a sequence $(\lambda^N)_{N\in \mathbb N} \in (0,\infty)^{\mathbb N}$ for which:
\begin{align}
& \sup_{N \in \mathbb N} \; \dfrac{1}{N |\lambda^N|^{3}}  \; \sup_{x \in \Omega} \card \left\{ i \in \{1,\ldots,N \} \text{ s.t. } h_i^N \in \overline{B(x,\lambda^N)}\right\} < \infty \label{eq_ass_concentration} \tag{A5}
\end{align}
and that this sequence satisfies the compatibility condition:
\begin{equation} \tag{A6} \label{eq_ass_lambdaN} 
\sup_{N \in \mathbb N} \lambda^N |d_{min}^N|^{-1/3} < \infty ,  \qquad
 \qquad  \lim_{N \to \infty} N^{1/6}\lambda^N = 0.
\end{equation}
We comment on these assumptions and their optimality later on.

\medskip

For $N$ sufficiently large,  \eqref{eq_ass_dmin} implies that the $(B_i^N)_{i=1,\ldots,N}$ are disjoint
and do not intersect $\partial \Omega.$ Hence, for $N$ large enough, assumption \eqref{eq_ass0} only fixes that the holes are inside $\Omega$. 
Again, there exists then a unique pair $(u^N,p^N) \in H^1(\mathcal F^N) \times L^2(\mathcal F^N)$ solution to \eqref{eq_stokesN}-\eqref{cab_stokesN}  (see next section for more details). 
 The pressure is unique up to an additive constant that we may fix
by requiring that $p^N$ has mean $0$. It can be seen as the Lagrange multiplier of the divergence-free condition in \eqref{eq_stokesN}.
Hence, we focus on the convergence of the sequence $(u^N)_{N \in \mathbb N}$ and will not go into details on what happens to the pressure (in contrast
with \cite{Allaire}). The $u^N$ are defined on different domains. In order to compute a limit for this sequence
of vector-fields, we unify their domain of definition by extending $u^N$ with the values $v_i^N$ on $B_i^N$
for any $i=1,\ldots,N.$ We still denote $u^N$ the extension for simplicity. This is now a sequence in $H^1_0(\Omega).$
Our main result reads:
\begin{theorem} \label{thm_main}
Let $(v_i^N,h_i^N)_{i=1,\ldots,N}$ be a sequence of data satisfying  \eqref{eq_ass0} for arbitrary $N \in \mathbb N$ and  \eqref{eq_ass1}--\eqref{eq_ass5} with 
$j \in L^2({\Omega}),$  $\rho \in L^{\infty}({\Omega})\,.$
Assume furthermore that \eqref{eq_ass_dmin}--\eqref{eq_ass_lambdaN} hold true.
Then, the associated sequence of extended velocity-fields $(u^N)_{N\in \mathbb N}$ converges
in $H^1_{0}(\Omega)-w$ to the unique velocity-field $\bar{u} \in H^1(\Omega)$ such that there exists a pressure  $\bar{p} \in L^2(\Omega)$ 
for which $(\bar u,\bar p)$ solves:
\begin{equation} \label{eq_SB}
\left\{
\begin{array}{rcl}
- \Delta \bar{u} + \nabla \bar{p} &=& 6 \pi a(j - \rho \bar{u}) \,, \\
{\rm div} \, \bar{u} &= & 0 \,,
\end{array}
\right.
\quad \text{ on $\Omega,$}
\end{equation}
completed with boundary conditions 
\begin{equation} \label{cab_SB}
\bar{u}= 0 \,,  \quad  \text{on $\partial \Omega$}\,.
\end{equation}
\end{theorem}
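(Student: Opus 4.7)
My plan is to obtain uniform $H^1$-estimates on the extended sequence $(u^N)$, extract a weak limit, identify it by a test-function argument built around the Stokes-drag asymptotics near each hole, and conclude by uniqueness of the Brinkman system. For the uniform bound I would first construct a divergence-free lift $L^N\in H^1_0(\Omega)$ with $L^N\equiv v_i^N$ on $B_i^N$. A natural candidate is the sum over $i$ of the explicit exterior Stokes flow around a single sphere of radius $1/N$ with boundary value $v_i^N$, cut off at scale $\lambda^N$ by a divergence-preserving truncation. Each single-hole piece has Dirichlet energy of order $|v_i^N|^2/N$, so the finite-overlap bound \eqref{eq_ass3} together with \eqref{eq_ass1} gives $\|L^N\|_{H^1_0}\le C$. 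Since $u^N-L^N$ lies in the divergence-free subspace of $H^1_0(\mathcal F^N)$ and solves Stokes with right-hand side $\Delta L^N$, the standard energy identity transfers the bound to $u^N$. Up to extraction, $u^N\rightharpoonup \bar u$ in $H^1_0(\Omega)$ weakly and strongly in $L^2(\Omega)$.

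To identify $\bar u$, fix $\varphi\in C_c^\infty(\Omega)$ with $\div\varphi=0$. Because $u^N$ is constant on each $B_i^N$, one has $\int_\Omega\nabla u^N:\nabla\varphi\,dx=\int_{\mathcal F^N}\nabla u^N:\nabla\varphi\,dx$, and integrating by parts using $-\Delta u^N+\nabla p^N=0$ and $\div\varphi=0$ yields
\[
\int_\Omega \nabla u^N:\nabla\varphi\,dx = \sum_{i=1}^N\int_{\partial B_i^N}\varphi\cdot\sigma(u^N,p^N)n\,d\sigma = \sum_{i=1}^N \varphi(h_i^N)\cdot F_i^N + R_N(\varphi),
\]
where $F_i^N:=\int_{\partial B_i^N}\sigma(u^N,p^N)n\,d\sigma$ is the hydrodynamic force on hole $i$, and $R_N(\varphi)$ collects the $O(1/N)$ deviations of $\varphi$ from $\varphi(h_i^N)$ on each sphere. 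A local energy bound on $u^N$ in an annulus around each $B_i^N$ together with \eqref{eq_ass2} gives $R_N(\varphi)\to 0$.

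The core of the argument is then the drag expansion
\[
\sum_{i=1}^N \varphi(h_i^N)\cdot F_i^N \;\longrightarrow\; -6\pi\int_\Omega \varphi\cdot (j - \rho\bar u)\,dx.
\]
I would implement this via a method of reflections: around each hole, subtract the explicit exterior Stokes flow $U_i^N$ generated by imposing $v_i^N-\bar u(h_i^N)$ on the sphere of radius $1/N$ with decay at infinity, cut off at scale $\lambda^N$. For this single-hole model the force is exactly $-6\pi(v_i^N-\bar u(h_i^N))/N$, while the residual solves Stokes in $\mathcal F^N$ with small data whose smallness is controlled via the Stokeslet decay of $U_i^N$, the pairwise separation $d_{\min}^N\gg(\lambda^N)^3$ from \eqref{eq_ass2}, and the global overlap $M^N\lesssim N|\lambda^N|^3$ from \eqref{eq_ass3}. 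Once this reduction is in place, the empirical-measure convergences \eqref{eq_ass4}--\eqref{eq_ass5} combined with the strong $L^2$ convergence of $u^N$ yield the Brinkman limit above.

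Combining these steps shows that $\bar u$ satisfies the weak form of \eqref{eq_SB}--\eqref{cab_SB}, and a pressure $\bar p\in L^2(\Omega)$ is recovered by de Rham's theorem. Uniqueness follows from Lax-Milgram applied to $(u,v)\mapsto\int_\Omega\nabla u:\nabla v+6\pi\int_\Omega\rho\,u\cdot v$ on the divergence-free subspace of $H^1_0(\Omega)$, which is coercive since $\rho\ge 0$ in $L^\infty$, while $j\in L^2(\Omega)$ defines a continuous linear form; this promotes convergence from a subsequence to the whole sequence. The main obstacle is the drag expansion: with non-vanishing boundary data each hole emits a Stokeslet that decays only as $1/|x-h_i^N|$, so the naive sum of pairwise contributions is not absolutely convergent. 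The purpose of the mesoscopic scale $\lambda^N$ in \eqref{eq_ass2}--\eqref{eq_ass3} is to allow averaging of holes into the macroscopic fields $\rho,j$ on a scale where $\bar u$ is essentially constant and Stokeslet tails remain summable; matching this averaged flow with $\bar u(h_i^N)$ as the correct ambient in the isolated-drag formula is the delicate quantitative point of the proof.
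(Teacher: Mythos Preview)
Your compactness and uniqueness steps are fine, but the identification of the limit follows a route that differs from the paper's and, under the weak dilution hypotheses \eqref{eq_ass2}--\eqref{eq_ass3}, contains a genuine gap.

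The paper never attempts to compute the hydrodynamic forces $F_i^N=\int_{\partial B_i^N}\sigma(u^N,p^N)n\,d\sigma$. Instead it places the stokeslet structure on the \emph{test function}: on each mesoscopic cube $T_\kappa^N$ of width $\lambda^N$ it replaces $w$ by the combination $w_\kappa^s=\sum_{i\in\mathcal I_\kappa^N}U^N[w(h_i^N)](\cdot-h_i^N)$ of stokeslets carrying the known, smooth values $w(h_i^N)$. Because each $U^N[w(h_i^N)]$ solves Stokes exactly outside $B_i^N$, integration by parts produces the exact Stokes-law term $\tfrac{6\pi}{N}w(h_i^N)\cdot v_i^N$ on $\partial B_i^N$ together with a boundary term on $\partial T_\kappa^N$ in which $u^N$ appears only through a mesoscopic average $\bar u_\kappa^N$. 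The error in replacing $w$ by $w_\kappa^s$ is controlled by the smoothness of $w$ (Proposition~\ref{prop_truncationprocess} and Proposition~\ref{prop_decomposition}), and the passage to the limit uses only the global $H^1$ bound on $u^N$, strong $L^2$ convergence, and the $L^\infty$ bound on the discrete densities coming from \eqref{eq_ass3}. No local information on $(u^N,p^N)$ near individual holes is ever required.

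Your scheme, by contrast, needs $F_i^N\approx\tfrac{6\pi}{N}(v_i^N-\text{ambient}_i)$ hole by hole. Two problems arise. First, taking the ambient to be $\bar u(h_i^N)$ is circular: $\bar u$ is the unknown limit and is not pointwise defined; you must use a mesoscopic average of $u^N$ instead. Second, and more seriously, once you do so you must show that the residual $r^N=u^N-\sum_i U_i^N$ contributes negligibly to the forces. But $r^N$ is not small in $H^1$: both $u^N$ and the stokeslet sum are $O(1)$, and on $\partial B_i^N$ the residual carries the tails $\sum_{j\ne i}U_j^N$, which are themselves $O(1)$ since stokeslets decay only like $1/|x|$. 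Controlling this is precisely the method-of-reflections estimate, which typically requires $d_{min}^N\gtrsim N^{-1/3}$ as in \cite{DGR}; under \eqref{eq_ass2} one only has $d_{min}^N\gg|\lambda^N|^3$, which can be far smaller. The step you yourself flag as ``the delicate quantitative point'' is thus exactly where your argument would break down under these assumptions, and the paper's device of moving the stokeslets to the test-function side is what circumvents it.
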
 

Concerning the assumptions of our theorem, we mention that,  with \eqref{eq_ass1} and \eqref{eq_ass_concentration}-\eqref{eq_ass_lambdaN}, we may extract a subsequence such that  the  first momentums of $S_N$ in $v$ converge to some $(\rho,j) \in L^{\infty}(\Omega) \times L^2(\Omega)$.  Hence, assumptions \eqref{eq_ass4} and \eqref{eq_ass5} only fix that the whole sequence converges to the same density $\rho$ and momentum
distribution $j$. We also note that we do not include a source term $f \in L^2(\Omega)$ (independant of $N$) in \eqref{eq_stokesN} even if our result extends in a straightfoward way 
to this case (due to the linearity of the Stokes equations).
Conversely,  if the empirical measures $S_N$ converge in the sense of \eqref{eq_ass4} to a bounded density $\rho \in L^{\infty}(\Omega),$ standard measure-theory arguments
show that there exists a sequence $(\lambda^N)_{N\in \mathbb N}$ so that
\eqref{eq_ass_concentration} holds true. We emphasize that, if this property is satisfied for some sequence $(\lambda^N)_{N\in \mathbb N}$, the same property is also satisfied by any  sequence $(\tilde{\lambda}^N)_{N\in \mathbb N}$ such that $\tilde{\lambda}^N \geq \lambda^N$ uniformly.
Then \eqref{eq_ass_lambdaN} might be interpreted as a compatibility condition between the minimal distances $(d_{min}^N)_{N\in \mathbb N}$ and the largest possible sequence $(\lambda^N)_{N\in \mathbb N}$.  

Another approach on the homogenization of the Stokes problem in a perforated domain relies on the notion of screening length (see \cite{HofVelpp} for instance).
We do not state our assumptions in these terms herein. However, a comparable set of assumptions to  \eqref{eq_ass_dmin}-\eqref{eq_ass_concentration}-\eqref{eq_ass_lambdaN} is introduced in \cite{NietVel06} to study the Laplace equations in perforated domains. In this reference, the relations between this set of assumptions and the screening property is discussed.

\medskip

The framework we identify with \eqref{eq_ass_dmin}-\eqref{eq_ass_lambdaN} represents  a non-trivial extension of previous computations in \cite{Allaire} and \cite{DGR}. First, if the distribution of holes is periodic as in \cite{Allaire}, we remark that, $d^N_{min} \lesssim 1/N^{1/3} \lesssim \lambda^N$  
and
$$
\sup_{x \in \Omega} \card \left\{ i \in \{1,\ldots,N \} \text{ s.t. } h_i^N \in \overline{B(x,\lambda^N)}\right\} = \dfrac{|\lambda^N|^3}{|d^N_{min}|^3}
$$
Consequently, the assumption \eqref{eq_ass_concentration} is satisfied if and only if $d_{min}^N \sim 1/N^{1/3}.$  If the radius of the holes is $a/N,$ we recover the critical value for the cell dimensions that is found in \cite{Allaire}.
Furthermore, in the periodic case, the density and flux $\rho$ and $j$ are constant so that the Stokes-Brinkman system we derive with {\bf Theorem \ref{thm_main}} is the same as the one of \cite{Allaire}. We mention that the two other non-critical regimes of \cite{Allaire}  are incompatible with our set of assumptions. The non-periodic configurations of \cite{DGR} are also included in our set of assumptions in the case $d_{min}^N$ behaves like $1/N^{1/3}.$  We recover again the same Stokes-Brinkman system  as in this former reference. 
But, the two assumptions \eqref{eq_ass_concentration}--\eqref{eq_ass_lambdaN} include also a lot more configurations in order that one may tackle the case of random configurations (see \cite{CarHill}). 

Another novelty of the paper stems from the method of proof. We apply herein arguments that are not highly sensitive to the explicit value of solutions to the Stokes problem. Our proof relies on the weak-formulation of \eqref{eq_stokesN} and the two main ingredients are the decrease of stokeslets (see \eqref{eq_Stokesexpansion}) and conservation arguments (see next subsection). We expect that our method can be extended to the full nonlinear Navier-Stokes equations -- as in \cite{DGR}. Also, we obtain an equivalent result for holes with arbitrary shapes and boundary conditions including rotation-velocities  on the holes (not only translation, see \cite{MSH} for more details).
We think that the content of this paper and of \cite{MSH} shall help tackling  the time-evolution problem with particles moving according to Newton laws. 
A homogenized system for such a time-dependent problem is computed in \cite{Hofpp} under the assumption that the particles have no inertia. We emphasize here that, in case of inertialess particles, the Newton laws degenerate into a system of nonlinear equations correlating the positions of particles and their velocities. It is then possible to propagate in time the regime
of \cite{DGR,HofVelpp} in which the minimal distance between particles is larger than $1/N^{1/3}.$ This is the regime under consideration in \cite{Hofpp} 
extending \cite{OJ} where the authors had proven that the regime where the minimal distance is much larger than $1/N^{1/3}$ is preserved locally in time. The case of particles with inertia is still broadly open.

\subsection{Outline of the proof}

Our proof is based on a classical compactness method.
First, we prove that the sequence $(u^N)_{N \in \mathbb N}$ is bounded in $H^{1}_0(\Omega).$ 
This part is obtained by applying a variational characterization of solutions to Stokes problems and relies only upon  \eqref{eq_ass1} and \eqref{eq_ass_dmin}.  We may then extract a subsequence (that we do not relabel) converging to some $\bar{u}$ in $H^1_0(\Omega)$
(and strongly in any $L^q(\Omega)$ for $q \in [1,6[$). In order to identify a system satisfied by $\bar{u}$ all that remains is devoted to the proof that:
$$
I_w := \int_{\Omega} \nabla \bar{u} : \nabla w \,, 
$$
satisfies:
$$
I_w = 6\pi a \int_{\Omega} (j(x) - \rho(x) {\bar u}(x)) \cdot w(x) {\rm d}x\,,
$$ 
for arbitrary divergence-free $w \in C^{\infty}_c(\Omega).$
So, we fix a divergence-free $w \in C^{\infty}_c(\Omega)$ and we note that, by construction, we have 
$$
I_w = \lim_{N \to \infty} I_w^N \qquad \text{ with } \quad I_w^N = \int_{\Omega} \nabla u^N : \nabla w \,, \quad \forall \, N \in \mathbb N.
$$
We compute then $I_w^N$ by applying that $u^N$ is a solution to the Stokes problem \eqref{eq_stokesN}-\eqref{cab_stokesN}.
As the support of all the integrals $I_w^N$ is $\Omega$ and the support of $w$ is not adapted to the Stokes problem \eqref{eq_stokesN}-\eqref{cab_stokesN},
this requires special care. 

\medskip

Following the line of \cite{DGR}, we compute the integral $I_w^N$ by dividing it
into the sum of contributions due to cells around the particles. However, the minimal distance 
between particles that we allow is too small in order that each cell contains only one particle (as in \cite{DGR}). 
So, we use as cells a covering  $(T^N_{\kappa})_{\kappa \in \Kappa^N}$ of $\text{Supp}(w)$ with cubes of width $\lambda^N$ and we split 
$$
I_w^N = \sum_{\kappa \in \Kappa^N} \int_{T_{\kappa}^N} \nabla u^N : \nabla w.  
$$
This leads us to sum the contribution of the holes by packs (corresponding to  holes belonging to the same cell of the partition).  Precisely, given $N$ and $\kappa,$ we apply that there are not too many holes in $T^N_{\kappa}$ because of assumption \eqref{eq_ass_concentration}.
Under the restriction \eqref{eq_ass_lambdaN}, we are able to replace $w$  by
\begin{equation} \label{eq_wreplace}
\sum_{i \in \mathcal I^N_{\kappa}} U^{a^N}[w(h_i^N)](x-h_i^N)\,,
\end{equation}
in the integral on  $T_{\kappa}^N.$  We denote here 
\begin{itemize}
\item $\mathcal I^N_{\kappa}$  the subset of indices $i \in \{1,\ldots,N\}$ for which $h_i^{N} \in T^{N}_{\kappa},$ \\[-8pt]
\item $(U^{a^N}[v](y),P^{a^N}[v](y))$ the  solution to the Stokes problem outside $B(0,a^N)$  with boundary condition $U[v](y) = v$ on $\partial B(0,a^N)$
and vanishing condition at infinity. 
\end{itemize}
We obtain that
$$
\int_{T_{\kappa}^N} \nabla u^N : \nabla w \sim \sum_{i \in \mathcal I^N_{\kappa}} \int_{T_{\kappa}^N} \nabla u^N : \nabla [U^{a^N}[w(h_i^N)]](x-h_i^N)\,.
$$
Then, we observe that the pair
$$
\left(  U^{a^N}[w(h_i^N)](x-h_i^N),  P^{a^N}[w(h_i^N)](x-h_i^N) \right)
$$
is a solution to the Stokes problem outside $B_i^N.$ Hence, we apply that $u^N$ is divergence-free, introduce the pressure and integrate by parts to obtain that:
\begin{multline*}
\int_{T_{\kappa}^N} \nabla u^N : \nabla w  \sim  \sum_{i \in \mathcal I^N_{\kappa}} \int_{\partial T_{\kappa}^N}   (\partial_n U^{a^N}[w(h_i^N)]  - P^{a^N}[w(h_i^N)]n) \cdot u^N{\rm d}\sigma   \\ 
 \; - \int_{\partial B_i^N} (\partial_n U^{a^N}[w(h_i^N)] - P^{a^N}[w(h_i^N)]n) \cdot v_i^N {\rm d}\sigma \,.
\end{multline*}
We skip for conciseness that $(U^{a^N},P^{a^N})$ depends on $(x-h_i^N)$ in these last identities.
It is classical by the Stokes law that:
$$
\int_{\partial B_i^N} (\partial_n U^{a^N}[w(h_i^N)] - P^{a^N}[w(h_i^N)]n )  {\rm d}\sigma = - 6 \pi a^N w(h_i^N)  \,, 
$$
and, by interpreting the Stokes system as the conservation of normal stress,  that:
$$
\int_{\partial T_{\kappa}^N} (\partial_n U^{a^N}[w(h_i^N)] - P^{a^N}[w(h_i^N)]n) {\rm d}\sigma = - {6 \pi}{a^N} w(h_i^N) \,.
$$
To take advantage of this last identity, we use that the size of $T_{\kappa}^N$ decreases to $0$ and we replace $u^N$ by some mean value $\bar{u}^N_{\kappa}$ in the integral on $\partial T_{\kappa}^N.$
Say for simplicity that:  
\begin{equation} \label{eq_ukappa1}
\bar{u}^N_{\kappa} = \dfrac{1}{|T_{\kappa}^N|} \int_{T_{{\kappa}}^N} u^N(x){\rm d}x\,,
\end{equation}
and assume that replacing $u^N$ by $\bar{u}_{\kappa}^N$ induces a small  error in the boundary integral. We obtain then that:
 $$
 \int_{T_{\kappa}^N} \nabla u^N : \nabla w  \sim  \sum_{i\in \mathcal I_{\kappa}^N} {6\pi}a^{N} w(h_i^N) \cdot v_i^N -  \sum_{i\in \mathcal I_{\kappa}^N} {6\pi}a^{N} w(h_i^N) \cdot \bar{u}_{\kappa}^N .
 $$
Summing over $\kappa$ yields:
$$
I^N_{w}  \sim   6\pi Na^N \left[ \dfrac{1}{N}\sum_{i=1}^N w(h_i^N) \cdot v_i^N - \dfrac{1}{N} \sum_{\kappa \in \Kappa^N} \left[ \sum_{i\in \mathcal I_{\kappa}^N}  w(h_i^N) \right] \cdot \bar{u}_{\kappa}^N\right].
$$
By assumptions \eqref{eq_ass_dmin} and \eqref{eq_ass5} we have respectively that $Na^N$ converges to $a$
and that the first term on the right-hand side converges to :
$$
\int_{\Omega} j(x) \cdot w(x){\rm d}x.
$$
To compute the limit of the remaining term, we introduce:
$$
\sigma^N  =\dfrac{1}{ N |\lambda^N|^3} \sum_{\kappa \in \Kappa^N} \left[\sum_{i \in \mathcal I_{\kappa}^N} w(h_i^N)\right] \mathbf{1}_{T_{\kappa}^N}\,,
$$
so that:
$$
\dfrac{1}{N} \sum_{\kappa \in \Kappa^N} \left[ \sum_{i\in \mathcal I_{\kappa}^N}  w(h_i^N)\right] \cdot \bar{u}_{\kappa}^N = \int_{\Omega} 
\sigma^N \cdot u^N(x){\rm d}x. 
$$
For $w \in C^{\infty}_c(\Omega),$ we have that $\sigma^N$ is bounded in $L^1(\Omega)$ and, under assumption \eqref{eq_ass4}, it
converges to $\sigma = \rho w$ in  $\mathcal D'(\Omega).$ However, this is not sufficient to compute the limit of this last term. Indeed we have
strong convergence of the sequence $u^N$ in $L^q(\Omega)$ for $q< 6$ only. Consequently, we need the supplementary assumption  
\eqref{eq_ass_concentration} which entails that $\sigma^N$ is bounded in $L^{\infty}(\Omega).$ Now, 
$\sigma^N$ converges in $L^{q}(\Omega) -w$ for arbitrary  $q \in (1,\infty)$ (up to the extraction of a subsequence) and 
combining this fact with the strong convergence of $u^N$ we obtain that:
$$
 \lim_{N\to \infty}\dfrac{1}{N} \sum_{\kappa \in \Kappa^N} \sum_{i\in \mathcal I_{\kappa}^N}  w(h_i^N) \cdot \bar{u}_{\kappa}^N =   \int_{\Omega} \rho(x) w(x) \cdot \bar{u}(x){\rm d}x. 
$$ 
This would end the proof if we could actually define $\bar{u}_{\kappa}^N$ as in  \eqref{eq_ukappa1} and prove that it induces a small error
by replacing $u^N$ with the average $\bar{u}_{\kappa}^N$ in the integral on $\partial T_{\kappa}^N.$ Unfortunately, for this, we need that the combination of stokeslets to which $u^N$ 
is multiplied is a solution to the Stokes equations on the set where the average is taken (in particular we cannot choose $T_{\kappa}^N$ here contrary to what we have written in \eqref{eq_ukappa1}). 
So, we introduce a parameter $\delta$ (which will  be large), we "delete" the holes in a $\lambda^N/\delta-$neighborhood of $\partial T_{\kappa}^N$ and we  construct $\bar{u}^{N}_{\kappa}$  as the average of $u^N$ on the $\lambda^N/(2\delta)$-neighborhood of 
$\partial T_{\kappa}^N$ (inside $T_{\kappa}^N$). By a suitable choice of the covering $(T_{\kappa}^N)_{\kappa \in \Kappa^N}$ we prove that the cost of this deletion process is  $O(1/\sqrt{\delta}).$ This relies on the two fundamental properties of our choice for the sets on which we average $u^N$: they are all obtained from a model annulus by translation and dilation, the non-deleted holes are "far" from this set (with respect to the decay of solutions to Stokes problems in exterior domains). Hence, we obtain that:
$$
\left|I_w - 6\pi a \int_{\Omega} (j(x) - \rho(x) \bar{u}(x)) \cdot w(x) {\rm d}x \right| \lesssim \dfrac{1}{\sqrt{\delta}}
$$ 
for arbitrary large $\delta.$ 

\medskip

To conclude, we mention that the limitations \eqref{eq_ass_lambdaN} on the sequence $(\lambda^N)_{N\in\mathbb N}$ have two different origins. First, solutions to the Stokes prolem in the perforated cubes $T_{\kappa}^N \setminus \bigcup_{i\in \mathcal I_{\kappa}^N} \overline{B_i^N}$ have to be close to a combination of stokeslet like \eqref{eq_wreplace}. Second, the deletion process that we depicted above must not be too expensive. In order to compute a sufficiently sharp bound on this error, we must replace again a modified test-function $\bar{w}$ by a suitable combination of stokeslet. It turns out that the combination \eqref{eq_wreplace}
is not optimal. We must adapt here ideas coming from the reflection method (see \cite{HofVelpp} and the references therein).

\medskip

\subsection{Notations}\label{sec_notations} In the whole paper, for arbitrary $x \in \mathbb R^3$ and $r>0,$ we denote $B_{\infty}(x,r)$ the open ball with center $x$ and radius $r$
for the $\ell^{\infty}$ norm. The classical euclidean balls are denoted $B(x,r).$ For $x \in \mathbb R^3$  and $0 <\lambda_1< \lambda_2 $ we also denote:
$$
A(x,\lambda_1,\lambda_2) := B_{\infty}(x,\lambda_2) \setminus \overline{B_{\infty}(x,\lambda_1)}\,.
$$ 
The operator distance (between sets) is always computed with the $\ell^{\infty}$ norm.
We constantly use scaled truncation functions.
A first family of  truncation functions is constructed in a classical way. We introduce $\chi \in C^{\infty}_c(\mathbb R^3)$ such that $\chi = 1$ on $[-1,1]^3$ and 
$\chi = 0$ outside $[-2,2]^3.$ For arbitrary $\sigma>0,$ we denote $\chi_{\sigma} = \chi(\cdot/\sigma)$ its rescaled versions. This truncation function satisfies :
\begin{itemize}
\item $\chi_{\sigma} = 1$ on $B_{\infty}(0,\sigma)$ and $\chi_{\sigma} = 0$ outside $B_{\infty}(0,2\sigma),$ \\[-8pt]
\item $\nabla \chi_\sigma$ has support in $A(0,\sigma,2\sigma)$ and size $O(1/\sigma).$
\end{itemize}
The second family is denoted $\zeta_{\delta} \in C^{\infty}(\mathbb R^3)$ with a parameter $\delta > 0$ and satisfies:
$$
\zeta_{\delta}(x) = 0 \text{ in $B_{\infty}\left(0, \frac{1- 1/{\delta}}{2}\right)$ } \quad \text{ and } \quad \zeta_{\delta}(x) = 1 \text{ outside $B_{\infty}\left(0,\frac 12\right)$}.
$$

When we truncate vector-fields with $\chi_\sigma$ or $\zeta_{\delta}$, we create {\em a priori} non divergence-free vector-fields. To lift the divergence of the truncated vector-fields, we use
extensively the Bogovskii operator $\mathfrak B_{x,\lambda_1,\lambda_2}$ on the "cubic" annulus $A(x,\lambda_1,\lambda_2)$ (again $x \in \mathbb R^3$ and $0 < \lambda_1 < \lambda_2$). 
We recall that $w= \mathfrak{B}_{x,\lambda_1,\lambda_2}[f]$ is defined for arbitrary $f \in L^2(A(x,\lambda_1,\lambda_2)),$ whose mean vanishes, and yields a 
$H^1_0(A(x,\lambda_1,\lambda_2))$ vector-field such that ${\rm div} \, w = f.$ As the returned vector-field vanishes on $\partial  A(x,\lambda_1,\lambda_2) $
we extend it  by $0$ to obtain a $H^1(\mathbb R^3)$ function. We refer the reader to \cite[Section III.3]{Galdi} for more details on the divergence problem and the Bogovskii operator.

\medskip 
 
For legibility we also make precise a few conventions. We have the following generic notations:
\begin{itemize}
\item $u$ is a velocity-field solution to a Stokes problem, with associated pressure $p$,
\item $w$ is a data/test-function,
\item $I$  is an integral while $\mathcal I$ is a set of indices,
\item $T$ is a cube, depending on the width we shall use different exponents,
\item $n$  denotes the outward normal to the open set under consideration\,.
\end{itemize}
We shall also use extensively the symbol $\lesssim$ to denote that we have an inequality with a non-significant constant.
We mean that we denote $a \lesssim b$ when there exists a constant $C$ -- which is not relevant in the calculation --
such that $a \leq C b.$

\medskip

\subsection{Outline of the paper}
As our proof is based on fine properties of the Stokes problem, we recall in next section basics and advanced material on the resolution of this problem in bounded domains, in exterior domains and in a model cell domain. The core of the paper is sections \ref{sec_prfpart1} and \ref{sec_pfpart2} where a more rigorous statement of our main result is given and the proof is developed.
In a concluding section, we provide some remarks and examples on the optimality/limits of our dilution assumptions.
Finally,  we collect in two appendices technical properties on the Bogovskii operators, Poincar\'e-Wirtinger inequalities and covering arguments in measure theory.

\section{Analysis of the Stokes problem} \label{sec_Stokes}
In this section, we provide technical results on the resolution of the Stokes  problem:
\begin{equation} \label{eq_stokes}
\left\{
\begin{array}{rcl}
- \Delta u + \nabla p &=& 0\,, \, \\
{\rm div}\, u &= & 0 \,,
\end{array}
\right.
\quad \text{ on $\mathcal F$}\,,
\end{equation}
completed with boundary conditions 
\begin{equation} \label{cab_stokes}
u = u_* \,, \quad \text{ on $\partial \mathcal F\,,$}
\end{equation}
for a lipschitz domain $\mathcal F$ and boundary condition $u_* \in H^{\frac 12}(\partial \mathcal F).$
We consider the different cases : $\mathcal F$ is a bounded set, an exterior domain,
or a perforated cube. In the second case, we complement the system with a vanishing condition at infinity.

\subsection{Reminders on the Stokes problem in a bounded or an exterior domain}
We first assume that $\mathcal F$ is  a bounded domain with a lipschitz boundary $\partial \mathcal F$.
In this setting, a standard way to solve the Stokes problem \eqref{eq_stokes}-\eqref{cab_stokes} is to work with a generalized formulation (see \cite[Section 4]{Galdi}). For this, we introduce:
 $$
D(\mathcal F) := \left\{u \in H^1(\mathcal F) \text{ s.t. }  {\rm div}\, u =0  \right\}\,,\quad 
D_0(\mathcal F) := \left\{u  \in H^1_0(\mathcal F) \text{ s.t. } {\rm div}\, u =0   \right\}\,.
$$
By \cite[Theorem III.4.1]{Galdi}, we have that $D_0(\mathcal F)$ is the closure for the $H^1_0(\Omega)-$norm  of 
$$
\mathcal D_0(\mathcal F) =  \left\{ w \in C^{\infty}_c(\mathcal F) \text{ s.t. }{\rm div} \,  w = 0\right\}\,.  \\
$$

\medskip

We have then the following definition 
\begin{definition}
Given  $u_* \in H^{\frac 12}(\partial \mathcal F),$ a vector-field $u \in D(\mathcal F)$ is called generalized solution to \eqref{eq_stokes}-\eqref{cab_stokes} if 
\begin{itemize}
\item $u = u_*$ on $\partial \mathcal F$ in the sense of traces,\\[-8pt]
\item for arbitrary $w \in D_0(\mathcal F),$ there holds:
\begin{equation} \label{eq_wfstokes}
\int_{\mathcal F} \nabla u : \nabla w = 0\,.
\end{equation}
\end{itemize}
\end{definition} 
This generalized formulation  is obtained assuming that we have a classical solution, multiplying \eqref{eq_stokes} with arbitrary $w \in \mathcal D_0(\mathcal F)$
and performing integration by parts. De Rham theory ensures that conversely, if one constructs a generalized solution then it is possible to find a pressure $p$
such that \eqref{eq_stokes} holds in the sense of distributions.
Standard arguments yield:
\begin{theorem} \label{thm_varcar}
Assume that the boundary of the fluid domain $\partial \mathcal F$ splits into $(N+1) \in \mathbb N$ lipschitz connected components $\Gamma_0,\Gamma_1,\ldots,\Gamma_N.$ 
Given $u_* \in H^{\frac 12}(\partial \mathcal F)$ satisfying 
\begin{equation} \label{eq_noflux}
\int_{\Gamma_i}  u_* \cdot n {\rm d}\sigma = 0\,, \quad \forall \, i \in \{0,\ldots, N\},
\end{equation}
then 
\begin{itemize}
\item there exists a unique generalized solution $u$ to \eqref{eq_stokes}-\eqref{cab_stokes};
\item this generalized solution realizes
\begin{equation} \label{eq_varcar}
\inf \left\{ \int_{\mathcal F} |\nabla v|^2 , v \in D(\mathcal F) \text{ s.t. } v_{|_{\partial \mathcal F}} = u_* \right\}\,.
\end{equation}
\end{itemize}
\end{theorem}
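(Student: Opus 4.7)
The plan is to reduce the existence and uniqueness of a generalized solution to a Riesz representation argument on the Hilbert space $D_0(\mathcal F)$, and to recover the variational characterization as the Euler--Lagrange condition of a strictly convex coercive functional. The main obstacle, and the only place where the flux compatibility \eqref{eq_noflux} plays a role, is the construction of a divergence-free lifting of the boundary datum $u_*$.

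\textbf{Step 1: divergence-free lifting.} I first pick any $V_* \in H^1(\mathcal F)$ with trace $u_*$ on $\partial \mathcal F$, which exists by surjectivity of the trace operator onto $H^{\frac 12}(\partial \mathcal F)$. Its divergence need not vanish, but the divergence theorem together with \eqref{eq_noflux} yields
\[
\int_{\mathcal F} {\rm div}\, V_* \, {\rm d}x = \sum_{i=0}^{N} \int_{\Gamma_i} u_* \cdot n \, {\rm d}\sigma = 0.
\]
Since $\mathcal F$ is a bounded Lipschitz domain, a Bogovskii-type operator applied to the mean-zero function ${\rm div}\, V_* \in L^2(\mathcal F)$ produces $\phi \in H^1_0(\mathcal F)$ with ${\rm div}\, \phi = {\rm div}\, V_*$; then $U_* := V_* - \phi$ belongs to $D(\mathcal F)$ with trace $u_*$. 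If $\mathcal F$ fails to be star-shaped one may first patch together local constructions using a partition of unity subordinated to a finite Lipschitz atlas, but the outcome is the same.

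\textbf{Step 2: existence, uniqueness, and variational characterization.} Writing any candidate solution as $u = U_* + v$ with $v \in D_0(\mathcal F)$, the generalized formulation \eqref{eq_wfstokes} becomes: find $v \in D_0(\mathcal F)$ such that
\[
\int_{\mathcal F} \nabla v : \nabla w \, {\rm d}x = - \int_{\mathcal F} \nabla U_* : \nabla w \, {\rm d}x, \quad \forall\, w \in D_0(\mathcal F).
\]
The bilinear form on the left is continuous and, by the Poincar\'e inequality on $H^1_0(\mathcal F)$, coercive on $D_0(\mathcal F)$; the right-hand side is a continuous linear form. Riesz representation yields a unique $v$, hence a unique generalized solution $u$. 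For \eqref{eq_varcar}, note that the set $\mathcal A := \{u \in D(\mathcal F) : u|_{\partial \mathcal F} = u_*\}$ is a closed affine subspace of $H^1(\mathcal F)$ modeled on $D_0(\mathcal F)$, non-empty by Step~1, and that $J(u) := \int_{\mathcal F} |\nabla u|^2$ is strictly convex, continuous and coercive on $\mathcal A$. Expanding $J(u + w) = J(u) + 2 \int_{\mathcal F} \nabla u : \nabla w + J(w)$ for $w \in D_0(\mathcal F)$ shows that $u \in \mathcal A$ minimizes $J$ over $\mathcal A$ if and only if \eqref{eq_wfstokes} holds, so the unique minimizer coincides with the generalized solution.

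I expect Step~1 to be the only real obstacle: the existence of a divergence-free $H^1$-extension of arbitrary $H^{\frac 12}$ boundary data on a possibly multiply connected Lipschitz domain is precisely where \eqref{eq_noflux} is forced, since without this condition no such lifting can exist (by the same divergence theorem computation, applied in reverse). Once the lifting is in hand, the remaining arguments are standard Hilbert-space considerations.
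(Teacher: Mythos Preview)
Your proof is correct and follows essentially the same approach as the paper: construct a divergence-free lifting of $u_*$, identify the admissible set as an affine space modeled on $D_0(\mathcal F)$, and then either invoke Riesz/Lax--Milgram or expand $\int|\nabla(u+w)|^2$ to obtain existence, uniqueness, and the variational characterization simultaneously. The paper defers the lifting to a cited lemma and Galdi's book, whereas you spell it out via trace extension plus Bogovskii; incidentally, your construction uses only the \emph{total} flux condition $\sum_i\int_{\Gamma_i}u_*\cdot n=0$, which is strictly weaker than the per-component hypothesis \eqref{eq_noflux} assumed in the statement.
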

\begin{proof}
Existence and uniqueness of the generalized solution is a consequence of \cite[Theorem IV.1.1]{Galdi}. 
A key argument in the proof of this reference is the property of traces that we state in the following lemma:
\begin{lemma} \label{lem_trace}
For arbitrary  $u_* \in H^{\frac 12}(\partial \mathcal F)$ satisfying \eqref{eq_noflux} there holds:
\begin{itemize}
\item  there exists $u_{bdy} \in D(\mathcal F)$ having trace $u_*$ on $\partial \mathcal F$\,,\\[-8pt]
\item  for arbitrary $u_{bdy} \in D(\mathcal F)$ having trace $u_*$ on $\partial \mathcal F$ there holds
$$
\left\{ u \in D(\mathcal F) \text{ s.t. } u_{|_{\partial \mathcal F}} = u_* \right\} = u_{bdy} + D_0(\mathcal F)\,.
$$
\end{itemize}
\end{lemma}
Then, given $u\in D(\mathcal F)$ the generalized solution to \eqref{eq_stokes}-\eqref{cab_stokes}  and $w \in D_0(\mathcal F),$
the fundamental property \eqref{eq_wfstokes} of $u$ entails that:
\begin{eqnarray*}
\int_{\mathcal F} |\nabla (u+w)|^2 &=& \int_{\mathcal F} |\nabla u|^2 + 2 \int_{\mathcal F} \nabla u : \nabla w + \int_{\mathcal F} |\nabla w|^2 \,,\\
										&=&\int_{\mathcal F} |\nabla u|^2 + \int_{\mathcal F} |\nabla w|^2 \,.
\end{eqnarray*}
Consequently, the norm on the left-hand side is minimal if and only if $w =0.$ Combining this remark with the above lemma yields that the generalized solution to \eqref{eq_stokes}-\eqref{cab_stokes} is the unique vector-field realizing \eqref{eq_varcar}.
\end{proof}

As mentioned previously, once it is proven that there exists a unique generalized solution $u$ to \eqref{eq_stokes}-\eqref{cab_stokes}, it is possible
to recover a pressure $p$ so that \eqref{eq_stokes}-\eqref{cab_stokes} holds  in the sense of distributions. If the data are smooth ({\em i.e.}  $\mathcal F$ has smooth boundaries and $u_*$ is smooth) one proves also that $(u,p) \in C^{\infty}(\overline{\mathcal F}).$ 

\medskip

We turn to the exterior problem as developed in \cite[Section 5]{Galdi}.  We assume now that $\mathcal F = \mathbb R^3 \setminus \overline{B^a}$ where $B^a= B(0,a)$ 
and we consider the Stokes problem \eqref{eq_stokes} with boundary condition 
\begin{equation} \label{cab_stokesexterior}
u= u_* \text{ on $\partial B^{a}$}\,, \quad \lim_{|x| \to \infty} u(x) = 0\,,
\end{equation}
for some $u_* \in H^{\frac 12}(\partial B^a).$ 
For the exterior problem, we keep the definition of generalized solution up to change a little the function spaces.
We denote in this case:
\begin{itemize}
\item $\mathcal D(\mathcal F) = \left\{ w_{|_{\mathcal F}}\,, \; w \in C^{\infty}_c(\mathbb R^3) \text{ s.t. } {\rm div}\, w = 0 \right\}\,,$\\[-8pt]
\item $D(\mathcal F)$ is the closure of $\mathcal D(\mathcal F)$ for the norm:
$$
\|w\|_{D(\mathcal F)} = \left( \int_{\mathcal F} |\nabla w|^2 \right)^{\frac 12}.
$$ 
\end{itemize}
We keep the definition of $\mathcal D_0(\mathcal F)$ as in the bounded-domain case and we construct $D_0(\mathcal F)$
as the closure of $\mathcal D_0(\mathcal F)$ with respect to this latter homogeneous $H^1$-norm. We note that, in the exterior
domain case, we still have that $D(\mathcal F) \subset W^{1,2}_{loc}(\mathcal F)$ (see \cite[Lemma II.6.1]{Galdi}) so that we have a trace operator on $\partial B^a$
and an equivalent to {Lemma \ref{lem_trace}}. 

\medskip

As in the case of bounded domains, the Stokes problem \eqref{eq_stokes}-\eqref{cab_stokesexterior} with boundary conditions
$u_*$ prescribing no flux through $\partial B^a$ has a  unique generalized solution (see \cite[Theorem V.2.1]{Galdi}, actually this existence/uniqueness result does not require the no-flux assumption) that satisfies a minimization problem. Thus, this solution satisfies:
\begin{itemize}
\item $\nabla u \in L^2(\mathbb R^3 \setminus \overline{B^a})\,,$ \\[-8pt]
\item for any $w \in D_0(\mathbb R^3 \setminus \overline{B^a})$ there holds:
$$
\int_{\mathbb R^3 \setminus \overline{B^a}} \nabla u : \nabla  w = 0,
$$
\item $u$ realizes:
\begin{equation} \label{eq_carvar_exterior}
\inf \left\{ \int_{\mathbb R^3 \setminus \overline{B^a}} |\nabla v|^2, v \in D(\mathbb R^3 \setminus \overline{B^a}) \quad v_{|_{\partial B^a}} = u^*\right\}.
\end{equation}
\end{itemize}
Explicit formulas  are provided when the boundary condition $u_*= v$ with $v \in \mathbb R^3$ constant (see \cite[Section 6.2]{DGR} for instance):
\begin{eqnarray}
u(x) = U^a[v](x) &:=& \dfrac{a}{4} \left( \dfrac{3}{|x|} +  \dfrac{a^2}{|x|^3} \right) v + \dfrac{3a}{4}  \left( \dfrac{1}{|x|} - \dfrac{a^2}{|x|^3}  \right)  \dfrac{v \cdot x}{|x|^2} x \,,  \label{eq_stokeslet}\\
p(x) = P^a[v](x) &:=& \dfrac{3a}{2} \dfrac{v \cdot x}{|x|^3} \,. \label{eq_presslet}
\end{eqnarray}
We call this classical solution stokeslet in what follows. With these explicit formulas, we remark that:
\begin{equation} \label{eq_Stokesexpansion}
|U^a[v] (x)| \lesssim \dfrac{a|v|}{|x|} \,,  \quad |\nabla U^a[v](x)| + |P^a[v] (x)| \lesssim \dfrac{a|v|}{|x|^2}\,, \quad \forall \, x \in \mathbb R^3 \setminus \overline{B^a}\,.
\end{equation}
We recall also the "Stokes' law" for the force exerted by the flow on $\partial B^a$:
\begin{equation} \label{eq_stokesformula}
\int_{\partial B^a} (\partial_n U^a[v] - P^a[v] n) {\rm d}\sigma = -6\pi a v.
\end{equation}
For convenience, we extend the stokeslet $U^a[v]$  by $U^a[v] = v$ on $B^a$.

\medskip

In the more general case of a smooth boundary condition $u^*$ prescribing no flux on $\partial B^a,$ the variational characterization 
of the generalized solution to the Stokes problem \eqref{eq_carvar_exterior} entails the following lemma:

\begin{proposition} \label{prop_crudebound}
There exists a universal constant $K$ such that, given  a divergence-free 
vector-field $w^* \in C^{\infty}(B_{\infty}(0,2a))$, denoting $u^* = w^*_{|_{\partial B^a}}$ and  $u \in D(\mathbb R^3 \setminus \overline{B^a})$ the unique generalized solution to \eqref{eq_stokes}-\eqref{cab_stokesexterior}, we have:
$$
\|u\|_{D(\mathbb R^3 \setminus \overline{B^a})} \leq K\sqrt{a} \left(\|w^*\|_{L^{\infty}(B_{\infty}(0,2a))} + a \|\nabla w^*\|_{L^{\infty}(B_{\infty}(0,2a))} \right) .
$$
\end{proposition}
\begin{proof}
Following the variational characterization of $u,$ the main point of the proof is the construction of a suitable lifting of $u^*.$
We set:
$$
\bar{u} = \chi_a w^* - \mathfrak{B}_{0,a,2a} [{\rm div}(\chi_a w^*)].
$$
Since $w^*$ is smooth and divergence free, this construction yields a divergence-free vector field $\bar{u} \in H^1_0(B(0,2a)),$
such that $\bar{u} = w^*$ on $\partial B^a.$ We have then:
\begin{eqnarray*}
\|u\|_{D(\mathbb R^3 \setminus \overline{B^a})} &\leq & \|\nabla \bar{u}\|_{L^2(\mathbb R^3 \setminus \overline{B^a})} \\
				&\leq & \|\nabla \chi_a w^*\|_{L^2(\mathbb R^3 \setminus \overline{B^a})} + \|\nabla \mathfrak{B}_{0,a,2a} [{\rm div}(\chi_a w^*)]\|_{L^2(\mathbb R^3 \setminus \overline{B^a})}.
\end{eqnarray*}
Since ${\rm div} w^* = 0,$ applying {Lemma \ref{lem_div}} on the Bogovskii operator yields a constant $K_0$ independant of $a$ such that:
$$
\|\nabla \mathfrak{B}_{0,a,2a} [{\rm div}(\chi_a w^*)]\|_{L^2(\mathbb R^3 \setminus \overline{B^a})} \leq K_0 \|w^* \cdot \nabla \chi_a \|_{L^2(\mathbb R^3 \setminus \overline{B^a})}.
$$
We conclude by computing explicitly:
\begin{eqnarray*}
\|w^* \cdot \nabla \chi_a \|_{L^2(\mathbb R^3 \setminus \overline{B^a})}   + \|\chi_a \nabla w^*\|_{L^2(\mathbb R^3 \setminus \overline{B^a})}.
\end{eqnarray*}

\end{proof}

\subsection{Stokes problem in a perforated cube} \label{sec_truncationprocess}
In this last subsection, we fix $M \in \mathbb N \setminus \{0\}$ together with $(a,\lambda) \in (0,\infty)^2$ and a divergence-free $w \in C^{\infty}_c(\mathbb R^3)$.
We consider the resolution of the Stokes problem in a cube of width $\lambda$ perforated with $M$ spherical holes of radius $a$ on which the velocity-field
$w$ is imposed. So, we fix $x_0 \in \mathbb R^3,$ we  denote $T = B_{\infty}(x_0,\lambda/2)$ an open cube of width $\lambda,$  and $B_i = B(h_i,a) \subset T$ for  $i=1,\ldots,M.$

\medskip

To state the main result of this subsection, we introduce two parameters: $d_m \in (0,\infty)$ is small while $\delta \in (0,\infty)$ is large. We assume that: 
\begin{equation}
d_m \leq \min_{i =1,\ldots,M} \min_{j \neq i}  |h_i - h_j |\,,   \qquad  \dfrac{\lambda}{\delta} \leq \min_{i=1,\ldots,M} \text{dist}(h_{i}, \partial T)  \,, 
\end{equation}
with
\begin{equation} \label{eq_hypdm}
\min \left( d_{m} , \frac{\lambda}{\delta} \right)  > 4a \,.
\end{equation}
We consider then the Stokes problem: 
\begin{equation} \label{eq_stokesledeux}
\left\{
\begin{array}{rcl}
- \Delta u + \nabla p &=& 0\,, \, \\
{\rm div}\,  u &= & 0 \,,
\end{array}
\right.
\quad \text{ on $\mathcal F = T \setminus \bigcup_{i=1}^M \overline{B_i}$}\,,
\end{equation}
completed with boundary conditions 
\begin{equation} \label{cab_stokesledeux}
\left\{
\begin{array}{rcll}
u(x) &=& w(x)\,, &\text{ on $\partial B_i\,,$ } \forall \, i =1,\ldots, M\,,\\[4pt]
u(x) &=& w(x)\,, & \text{ on $\partial T\,.$}
\end{array}
\right.
\end{equation}
Assumption \eqref{eq_hypdm} entails that the $B_i$ do not intersect and do not meet the boundary $\partial T.$ So, the set
$T \setminus \bigcup_{i=1}^M \overline{B_i}$ has a lipschitz boundary that one can decompose into $M+1$ connected components
corresponding to $\partial T$ and $\partial B_i$ for $i=1,\ldots,M.$ 
Direct computations show that:
\begin{align*}
& \int_{\partial B_i} w \cdot n {\rm d}\sigma = \int_{B_i} {\rm div} \, w =0\,, \qquad \text{ for $i=1,\ldots,M,$}
\\
& \int_{\partial T} w \cdot n {\rm d}\sigma = \int_{T} {\rm div} \, w =0.
\end{align*}
Hence, the problem \eqref{eq_stokesledeux}-\eqref{cab_stokesledeux} is solved by applying Theorem \ref{thm_varcar}
and it admits a unique generalized solution $u \in H^1(\mathcal F).$  

\medskip

A first crude bound on $u$ can be computed by adapting the proof of {Proposition \ref{prop_crudebound}}.
This yields:
\begin{proposition} \label{prop_crudeboundsvl}
Under the assumption \eqref{eq_hypdm}, there exists a constant $K_0$ independant of $(M,d_m,w,a,\lambda,\delta)$ and a constant $C_{\delta}$ depending only on $\delta$ such that:
\begin{eqnarray*}
\|\nabla u\|_{L^2(\mathcal F)} &\leq& K_0 \sqrt{Ma} \left( \max_{i=1,...,M}  \|w\|_{L^{\infty}(B_{\infty}(h_i,2a)} + a\max_{i=1,...,M}  \|\nabla w\|_{L^{\infty}(B_{\infty}(h_i,2a)}\right) \\
&& \quad + C_{\delta}  \|\nabla w\|_{L^2(\mathbb R^3 \setminus B_{\infty}(x_0,[1-1/(4\delta)]\lambda/2))} .
\end{eqnarray*}
\end{proposition}
\begin{proof}
Similarly to the Proposition \ref{prop_crudebound}, under the assumption \eqref{eq_hypdm}, we may 
construct a lifting of the boundary condition \eqref{cab_stokesledeux} by patching together liftings around the $B_i:$
\begin{eqnarray*}
\bar{u} &=& \sum_{i=1}^M \chi_a(\cdot-h_i) w - \mathfrak{B}_{h_i,a,2a} [x \mapsto {\rm div}(\chi_a(x-h_i) w(x))] \\
&& \quad + \zeta_{4\delta} ((\cdot-x_0)/\lambda) w - \mathfrak B_{x_{0},[1 - 1/(4\delta)]\lambda/2,\lambda/2} \left[ x \mapsto {\rm div} ( \zeta_{4\delta}((x-x_0)/\lambda)w(x)) \right] \,.
\end{eqnarray*}
We recall here that $x_0$ is the center of $T$ while $\chi_a,\zeta_{4\delta}$ are the truncation functions that we introduce in Section \ref{sec_notations}. Combining the variational characterization of $u$ with computations that are similar to the proof of Proposition \ref{prop_crudebound} entail
the result. %
We only detail the control of the term on the second line:
\[
\bar{u}_{ext} =  \zeta_{4\delta} ((\cdot-x_0)/\lambda) w - \mathfrak B_{x_{0},[1 - 1/(4\delta)]\lambda/2,\lambda/2} \left[ x \mapsto {\rm div} ( \zeta_{4\delta}((x-x_0)/\lambda)w(x)) \right]
\] 
Applying the properties of the Bogovskii operator, we have:
\begin{align*}
\|\nabla \bar{u}_{ext}\|_{L^2(\mathcal F)}
& \leq C_{\delta} \Bigl[ \|(x-x_0)\nabla \zeta_{4\delta} ((\cdot-x_0)/\lambda)\|_{L^{\infty}(\mathcal F)} \|w(x)/|(x-x_0)|\|_{L^2(\mathbb R^3 \setminus B_{\infty}(x_0,[1-1/(4\delta)]\lambda/2))} \\
& \quad + \|\nabla w\|_{L^2(\mathbb R^3 \setminus B_{\infty}(x_0,[1-1/(4\delta)]\lambda/2))} \Bigr].
\end{align*}
We apply then the Hardy inequality in exterior domains (see the proof of \cite[Theorem II.6.1-(i))]{Galdi}) to bound:
\[
\|w(x)/(x-x_0)\|_{L^2(\mathbb R^3 \setminus B_{\infty}(x_0,[1-1/(4\delta)]\lambda/2))}  \leq C_{\delta} \|\nabla w\|_{L^2(\mathbb R^3 \setminus B_{\infty}(x_0,[1-1/(4\delta)]\lambda/2))}. 
\]
We note that the constant appearing here is independant of $\lambda$ by a scaling argument.
\end{proof}

In what follows, we focus on the case where $w$ vanishes on $\partial T.$ We look for  more detailed informations on $u.$ In particular, we want to compare the solution $u$  with combinations of stokeslets:
$$
\sum_{i=1}^M U^a[w_i](x-h_i).
$$
Here $(w_1,\ldots,w_M) \in [\mathbb R^3]^M$ are to be chosen and $U^a$ is  defined in \eqref{eq_stokeslet}.
In this respect, our first main result reads:
\begin{proposition} \label{prop_truncationprocess} 
Let assume further that $w \in C^{\infty}_c(T)$ and denote:
$$
u_s(x) = \sum_{i=1}^M U^a[w(h_i)](x-h_i) \qquad \forall \, x \in \mathbb R^3.
$$
There exists a constant $K_0$ independent of $(M,d_m,w,a,\lambda,\delta)$
and a constant  $C_{\delta}$ depending only on $\delta$  for which:
$$
\|\nabla (u-u_s)\|_{L^2(\mathcal F)}  
\leq K_0 \|w\|_{W^{1,\infty}(\mathbb R^3)}  \left( \sqrt{Ma^{3}}\left[  1 +  \dfrac{M^{2/3}}{d_{m}} +  \dfrac{a M^{1/3}}{d_{m}^2}    \right] +C_{\delta}\dfrac{Ma}{\sqrt{\lambda}}\right)\,.  
$$
\end{proposition}
\begin{proof}
We split the error term into two pieces. First, we reduce the boundary conditions of the Stokes problem \eqref{eq_stokesledeux}-\eqref{cab_stokesledeux} to constant
boundary conditions. Then, we compare the solution to the Stokes problem with constant boundary conditions to the combination of stokeslets 
$u_s.$ In the whole proof, the symbol $\lesssim$ is used when the implicit constant in our inequality does not depend on $M,d_m,w$ and $a,\lambda,\delta.$

\medskip

So, we introduce  ${u}_c$ the unique generalized solution to the Stokes problem on $\mathcal F$ with boundary conditions:
\begin{equation} \label{cab_stokesletrois}
\left\{
\begin{array}{rcll}
u_c &=& w(h_i)\,, & \text{ on $\partial B_i\,,$ } \forall \, i =1,\ldots, M\,,\\[4pt]
u_c &=& 0\,, & \text{ on $\partial T\,.$}
\end{array}
\right.
\end{equation}
Again, existence and uniqueness of this velocity-field holds by applying Theorem \ref{thm_varcar}.
We split then:
\begin{eqnarray*}
\|\nabla (u-u_s)\|_{L^2(\mathcal F)} &\leq& \|\nabla (u - u_c)\|_{L^2(\mathcal F)} + \|\nabla (u_c-u_s)\|_{L^2(\mathcal F)}.
\end{eqnarray*}
To control the first term on the right-hand side, we note that $(u-u_c)$ is the unique generalized solution to the Stokes problem on $\mathcal F$
with boundary conditions:
$$
\left\{
\begin{array}{rcll}
(u - u_c) (x) &=& w(x) - w(h_i)\,, & \text{ on $\partial B_i\,,$ } \forall \, i =1,\ldots, M\,,\\[4pt]
(u - u_c)(x) &=& 0\,, & \text{ on $\partial T\,.$}
\end{array}
\right.
$$
Hence, Proposition \ref{prop_crudeboundsvl} applies to $(u-u_c).$ This entails that:
$$
\|\nabla (u-u_c)\|_{L^2(\mathcal F)} \lesssim \sqrt{Ma} \left[\max_{i=1,\ldots,M} \|w-w(h_i)\|_{L^{\infty}(B_{\infty}(h_i,2a))} + a \max_{i=1,\ldots,M} \|\nabla w\|_{L^{\infty}(B_{\infty}(h_i,2a))} \right] .
$$
Explicit computations yield eventually that:
\begin{equation} \label{eq_contu-uc}
\|\nabla (u-u_c)\|_{L^2(\mathcal F)} \lesssim \sqrt{Ma^3}  \|w\|_{W^{1,\infty}}\,.
\end{equation}

\medskip

We turn to estimating $v := u_c - u_s.$ Due to the linearity of the Stokes equations, $v$ is the unique generalized solution to the Stokes equation
on $\mathcal F$ with boundary condition: 
\begin{equation} 
\left\{
\begin{array}{rcll}
v &=& w(h_i) - u_s \,, & \text{ on $\partial B_i\,,$ } \forall \, i =1,\ldots, M\,,\\[4pt]
v &=& -u_s \,, & \text{ on $\partial T\,.$}
\end{array}
\right.
\end{equation}
Hence, Proposition \ref{prop_crudeboundsvl} applies again to $v.$  By construction, we note that:
$$
\begin{array}{rcll}
 v(x) &=& - \sum_{j\neq i} U^{a}[w(h_j)](x-h_j),   & \text{ on $\partial B_i,$ for $i=1,\ldots,M$},\\[10pt]
 v(x) &=& -\sum_{j=1}^M  U^{a}[w(h_j)](x-h_j),  &  \text{ on $\partial T$}
\end{array}
$$
Hence, we may choose as extension of these boundary conditions, any divergence-free vector-field $\tilde{w} \in C^{\infty}_c(\mathbb R^3)$  that satisfies:
$$
\begin{array}{rcll}
 \tilde{w}(x) &=& - \sum_{j\neq i} U^{a}[w(h_j)](x-h_j),   & \text{ on $B_{\infty}(h_i,2a),$ for $i=1,\ldots,M$},\\[10pt]
 \tilde{w}(x) &=& -\sum_{j=1}^M  U^{a}[w(h_j)](x-h_j),  &  \text{ on $\mathbb R^3 \setminus B_{\infty}(x_0,[1-1/(4\delta)]\lambda/2)$}.
\end{array}
$$
We emphasize that it is possible to construt such an extension by adapting the ideas in the proof of Proposition \ref{prop_crudeboundsvl} since the $U^a$ prescribe no flux through hypersurfaces. 

In order to apply Proposition \ref{prop_crudeboundsvl}, we bound first
$$
\max_{i=1,\ldots,M} \|\tilde{w}\|_{L^{\infty}(B_{\infty}(h_i,2a))}\,,  \qquad \max_{i=1,\ldots,M} \|\nabla \tilde{w}\|_{L^{\infty}(B_{\infty}(h_i,2a))}\,.
$$
Given $i\in \{1,\ldots,M\}$, thanks to the asymptotic expansion of the stokeslet \eqref{eq_Stokesexpansion} and because $|x-h_i| \geq |h_i-h_j|/2$ on $B(h_i,2a)$ (recall \eqref{eq_hypdm}), we have:
$$
\|\tilde{w}\|_{L^{\infty}(B_{\infty}(h_i,2a))} \lesssim \sum_{j\neq i} \dfrac{a|w(h_j)|}{|h_i-h_j|} 
\qquad  
\|\nabla \tilde{w}\|_{L^{\infty}(B_{\infty}(h_i,2a))} \lesssim \sum_{j\neq i} \dfrac{a|w(h_j)|}{|h_i-h_j|^2} . 
$$
Applying \cite[Lemma 2.1]{OJ} to bound the sum on $j$ appearing above entails:
\begin{equation} \label{eq_tw_1}
\|\tilde{w}\|_{L^{\infty}(B_{\infty}(h_i,2a))} \lesssim \dfrac{M^{2/3}a}{d_{m}} \qquad \|\nabla \tilde{w}\|_{L^{\infty}(B_{\infty}(h_i,2a))} \lesssim \dfrac{M^{1/3}a}{d_{m}^2}.
\end{equation}

We turn now to compute a bound for:
$$
\|\nabla \tilde{w}\|_{L^2(\mathbb R^3 \setminus B_{\infty}(x_0,[1-1/(4\delta)]\lambda/2))}.
$$ 
To this end, we note that, by definition of $\tilde{w},$ we have:
$$
\|\nabla \tilde{w}\|_{L^2(\mathbb R^3 \setminus B_{\infty}(x_0,[1-1/(4\delta)]\lambda/2))} \leq \sum_{i=1}^M \|\nabla U^a[w(h_i)](\cdot-h_i)\|_{L^2(\mathbb R^3 \setminus B_{\infty}(x_0,[1-1/(4\delta)]\lambda/2))} .
$$
Then, given $i \in \{1,\ldots,M\},$ because of assumption \eqref{eq_hypdm}, we have 
 that ${\rm dist}(h_i, T \setminus B_{\infty}(x_0,[1-1/(4\delta)]\lambda/2)) \geq \lambda/8\delta.$ 
Replacing the stokeslet with its explicit value, we obtain thus:
\begin{eqnarray*}
\|\nabla U^a[w(h_i)](\cdot-h_i)\|_{L^2(\mathbb R^3 \setminus B_{\infty}(x_0,[1-1/(4\delta)]\lambda/2))}
&\lesssim & \left( \int^{\infty}_{\lambda/8\delta} \dfrac{a^2 |w(h_i)|^2}{r^2} {\rm d}r\right)^{1/2} \\
& \leq & \dfrac{C_{\delta}a|w(h_i)|}{\sqrt{\lambda}}
\end{eqnarray*}
After combination, we derive finally:
\begin{equation} \label{eq_tw_2}
\|\tilde{w}\|_{H^1(T \setminus B_{\infty}(x_0,[1-1/(4\delta)]\lambda/2))} \leq \dfrac{C_{\delta} M a \|w\|_{L^{\infty}}}{\sqrt{\lambda}}.
\end{equation}
Hence, applying Proposition \ref{prop_crudeboundsvl} to $v$ yields,  with the computations \eqref{eq_tw_1} and \eqref{eq_tw_2}, 
that:
\begin{equation} \label{eq_contuc-us}
\|\nabla (u_c-u_s)\|_{L^2(\mathcal F)} \lesssim \|w\|_{L^{\infty}} \left( \sqrt{Ma} \left[  \dfrac{aM^{2/3}}{d_{m}} +  \dfrac{a^2 M^{1/3}}{d_{m}^2} \right]+ \dfrac{C_{\delta} M a}{\sqrt{\lambda}} \right).
\end{equation}

This ends up the proof.
\end{proof}
%
%
%
%

Choosing $w_i = w(h_i),$ the combination of stokeslet that we obtain (namely $u_s$) is not a sufficiently good approximation of $u$ for our later purpose. It turns out that the error term $aM^{2/3}/d_m$ is too large. Adapting the method of reflection of \cite{OJ} (see also \cite{HofVelpp}) we find a  better choice that yield an approximation error without this term.
The result of this analysis is the content of the following proposition:
\begin{proposition} \label{prop_comparestks2} 
Let denote $\xi := {M^{2/3}a}/d_{m} $ and assume that $\xi \leq \xi_{max} < 1.$
If $w \in C^{\infty}_c(T),$ there exists a constant $K_{max}$ depending only on $\xi_{max}$ and a constant  $C_{\delta}$ depending only on $\delta$  for which the following statements holds true:
\begin{itemize}
\item[i)] There exists $(w^{(\infty)}_1,\ldots,w^{(\infty)}_M) \in \mathbb R^3$ so that:
$$\max_{i=1,\ldots,M} | w^{(\infty)}_i - w(h_i)| \leq K_{max}\dfrac{M^{2/3}a}{d_{m}}
$$
\item[ii)] Denoting 
$
\bar{u}_s = \sum_{i=1}^M U^a[w^{(\infty)}_i](\cdot-h_i)
$ 
we have:
\begin{multline*}
\|\nabla  (u - \bar{u}_s)\|_{L^2(T)}  + \|u - \bar{u}_s\|_{L^6(T)}  \\ \leq K_{max}\|w\|_{W^{1,\infty}} \left( \sqrt{Ma^3} + \sqrt{Ma} \dfrac{M^{1/3}a^2}{d_{m}^2}+    \dfrac{C_{\delta} Ma}{\sqrt{\lambda}}\right) 
\end{multline*}
\end{itemize} 

\end{proposition}
\begin{proof}
We first remark that we may restrict to constant boundary conditions by introducing the solution $u_c$ as in the previous proof.
This yields an error term of size $\sqrt{Ma^3}\|w\|_{W^{1,\infty}}.$ Thus, our proof reduces to computing an approximation  
for the generalized solution $u_c$ to the Stokes problem with boundary conditions
\begin{equation} \label{cab_stokesletrois_bis}
\left\{
\begin{array}{rcll}
u_c &=& w(h_i)\,, & \text{ on $\partial B_i\,,$ } \forall \, i =1,\ldots, M\,,\\[4pt]
u_c &=& 0\,, & \text{ on $\partial T\,.$}
\end{array}
\right.
\end{equation}

{\em Step 1: Method of reflection.} 
Following the ideas of the method of reflection, we remark that to construct $\bar{u}_s$ the first natural try would be:
$$
u_s(x) = \sum_{i=1}^{M} U^a[w(h_i)](x-h_i).
$$
However, doing so, we create a solution to the Stokes equations  which does not match the right boundary conditions. 
Indeed, we have:
$$
u_s(x) = w(h_i) + \sum_{j\neq i} U^a[w(h_i)](x-h_j) \text{ on $\partial B_i$ for $i=1,\ldots,M$}.
$$
As in the case of non-constant boundary conditions, the idea is to approximate the error by a constant in order to improve
the approximation. This is the motivation for the following iterative process.
We define 
$$
w_i^{(0)} = w(h_i),  \quad  \text{ for $i=1,\ldots,M,$} \\
$$
Given $k\in \mathbb N,$ assuming that $(w_i^{(l)})_{i=1,\ldots,M}$ are constructed for $l \in \{0,\ldots,k\}$, we set:
$$
\left\{
\begin{array}{rcl}
s_i^{(k)}			&=& \displaystyle\sum_{l=0}^k w_{i}^{(l)}\,,\\[8pt]
w^{(k+1)}_i &=&   w(h_i) -  \displaystyle \sum_{i=1}^{M} U^a\left[s_i^{(k)} \right](h_i-h_j) \,,
\end{array}
\right.
\quad 
\text{ for $i=1,\ldots,M.$}
$$
This yields a sequence of $M-$uplets of vectors.

\medskip

Correspondingly, given $k \in \mathbb N,$ we set 
$$
u_s^{(k)}(x) = \sum_{i=1}^M U^a \left[s_{i}^{(k)}\right](x-h_i)\,,  \quad \forall \, x \in \mathbb R^3.
$$
We remark that, the above recursion formula also reads:
\begin{equation} \label{eq_recform}
w^{(k+1)}_i  = w(h_i)   - u_{s}^{(k)}(h_i)\,, \quad \text{ for $i=1,\ldots,M.$}
\end{equation}

{\em Step 2: Convergence.} We show now that the sequences  $(s_i^{(k)})_{k\in \mathbb N}$ converge for $i \in \{1,\ldots,M\}.$
This amounts to proving that $(w_i^{(k)})_{k\in \mathbb N}$ converges sufficiently fast to $0.$ 
To this end, we remark that, for $k \in \mathbb N,$ we have:
$$
u_{s}^{(k+1)}(x) = u_{s}^{(k)}(x) + \sum_{i=1}^M U^a \left[w_{i}^{(k)}\right](x-h_i) 
$$
Plugging this identity in the recursion formula \eqref{eq_recform} yields that, for $k\geq 1$ and $i=1,\ldots,M$ there holds:
\begin{eqnarray*}
w_{i}^{(k+1)} &=& (w(h_i) - u_{s}^{(k-1)}(h_i)) -  \sum_{j=1}^{M} U^{a}[w_{j}^{(k)}](h_i-h_j)  \\
						&=& w_i^{(k)} - \sum_{j=1}^{M} U^{a}[w_{j}^{(k)}](h_i-h_j)\\
						&=& -\sum_{j\neq i} U^{a}[w_{j}^{(k)}](h_i-h_j) .
\end{eqnarray*}
Applying again the asymptotics of the stokeslet with \cite[Lemma 2.1]{OJ} yields:
$$
\max_{i=1,\ldots,M} |w_{i}^{(k+1)}| \leq \dfrac{M^{2/3}a}{d_{m}} \max_{i=1,\ldots,M}|w_{i}^{(k)}|.
$$
Since, by assumption, we have $\xi = M^{2/3}a/d_{m} <1$, we obtain that $(s^{(k)}_i)_{k\in \mathbb N}$ converges and
that, denoting $w^{(\infty)}_i$ the limits, we have:
$$
|w_{i}^{(\infty)} - w_{i}^{(0)}| = |w_{i}^{(\infty)} - w(h_i)| \leq \|w\|_{L^{\infty}} \dfrac{\xi}{(1-\xi)} \leq \|w\|_{L^{\infty}} \dfrac{\xi}{(1-\xi_{max})}.
$$
We obtain $i).$

\medskip

{\em Step 3: Error estimate.} Since $(s_{i}^{(k)})$ converges, and $U^{a}[w] \in H^1_{loc}(\mathbb R^3)$ (whatever the value of $w \in \mathbb R^3$)  we also have that $u_{s}^{(k)}$ converges to $\bar{u}_s$ in $H^1(T),$ where:
$$
\bar{u}_{s}(x) = \sum_{i=1}^M U^a[w_{i}^{(\infty)}](x-h_i) .
$$
In particular, we have that
$$
\|\nabla (u_c - \bar{u}_s)\|_{L^2(T)} = \lim_{k \to \infty} \|\nabla (u_c - u_s^{(k)})\|_{L^2(T)} .
$$

Then, given $k \in \mathbb N,$ we remark that $v := u_c-u_s^{(k)}$ is the unique generalized solution to the Stokes problem with boundary conditions:
\begin{equation} 
\left\{
\begin{array}{rcll}
v &=& w(h_i) - u_s^{(k)}\,, & \text{ on $\partial B_i\,,$ } \forall \, i =1,\ldots, M\,,\\[4pt]
v &=& -u_s^{(k)}\,, & \text{ on $\partial T\,.$}
\end{array}
\right.
\end{equation}
Consequently, we apply once again {Proposition \ref{prop_crudeboundsvl}} 	to bound $v.$ For this, we remark that, since $U_a[s_i^{(k)}](x-h_i)$ is constant on $B_i$, there holds:
$$
v(x) = w(h_i) - u_s^{(k)}(h_i) - \sum_{j \neq i} \left[ U^a[s_j^{(k)}](x-h_j) - U^a[s_j^{(k)}](h_i-h_j)\right] \quad \forall \, x \in \partial B_i,
$$
where $w(h_i) - u_s^{(k)}(h_i)  = w_i^{(k+1)}.$ We keep notations $v$ for the extension of the right-hand side above. 
We can bound:
$$
\|v\|_{L^{\infty}(B_{\infty}(h_i,2a))} \lesssim |w_{i}^{(k+1)}| + a \|\sum_{j\neq i} \nabla U^a[s_j^{(k)}](\cdot-h_j)\|_{L^{\infty}(B_{\infty}(h_i,2a))}.
$$ 
To control the second term appearing above, we apply again the asymptotics of the stokeslets with \cite[Lemma 2.1]{OJ}
to obtain:
$$
\|\sum_{j\neq i} \nabla U^a[s_j^{(k)}](\cdot-h_j) \|_{L^{\infty}(B_{\infty}(h_i,2a))} \leq    K_{max} \|w\|_{L^{\infty}} \dfrac{M^{1/3}a}{d_{m}^2},
$$
and thus:
$$
\|v\|_{L^{\infty}(B_{\infty}(h_i,2a))} \lesssim |w_{i}^{(k+1)}| + K_{max}  \|w\|_{L^{\infty}} \dfrac{M^{1/3}a^2}{d_{m}^2}.
$$
As for the term on the boundary $\partial T$, we obtain with similar computations as the one to obtain \eqref{eq_tw_2} that:
\begin{equation} \label{labelmanquant}
\|\nabla u_s^{(k)}\|_{L^2(\mathbb R^3\setminus B_{\infty}(x_0,[1-1/(4\delta)]\lambda/2))} \leq \dfrac{C_{\delta}K_{max} Ma \|w\|_{L^{\infty}}}{\sqrt{\lambda}} .
\end{equation}
So, Proposition \ref{prop_crudeboundsvl} yields that:
\begin{eqnarray*}
\|\nabla (u_c-u_s^{(k)}) \|_{L^2(T)} &=& \|\nabla (u_c-u_s^{(k)}) \|_{L^2(\mathcal F)} \\
			&\leq & \sqrt{Ma} \left( \max_{i=1,\ldots,M} |w_{i}^{(k+1)}| +  K_{max}  \|w\|_{L^{\infty}} \dfrac{M^{1/3}a^2}{d_{m}^2}\right)  \\ && \quad +   \dfrac{C_{\delta}K_{max} Ma \|w\|_{L^{\infty}}}{\sqrt{\lambda}}.
\end{eqnarray*}
We extend then $u_c$ by $0$ outside $T.$ We have that 
$\nabla (u_c-u_s^{(k)}) \in L^2(\mathbb R^3).$ Classical Sobolev embedding yield that:
$$
\|u_c-u_s^{(k)}\|_{L^6(\mathbb R^3)} \lesssim  \|\nabla (u_c-u_s^{(k)}) \|_{L^2(\mathbb R^3)}\,,
$$
and, consequently:
$$
\|u_c-u_s^{(k)}\|_{L^6(T)} \lesssim 
\left( \|\nabla (u_c-u_s^{(k)}) \|_{L^2(T)}^2 +  \|\nabla u_s^{(k)} \|^2_{L^2(\mathbb R^3 \setminus T)}\right).
$$
Bounding the last term on the right-hand side as in \eqref{labelmanquant} (we note that only gradient terms appear), we obtain the similar bound for the $L^6$-norm:
\begin{eqnarray*}
\|u_c-u_s^{(k)}\|_{L^6(T)} &\lesssim& \sqrt{Ma} \left( \max_{i=1,\ldots,M} |w_{i}^{(k+1)}| +  K_{max}  \|w\|_{L^{\infty}} \dfrac{M^{1/3}a^2}{d_{m}^2}\right)  \\ && \quad +   \dfrac{C_{\delta}K_{max} Ma \|w\|_{L^{\infty}}}{\sqrt{\lambda}}.
\end{eqnarray*}
We conclude the proof of $ii)$ by taking the limit $k \to \infty.$

\end{proof}

\section{Proof of Theorem \ref{thm_main} -- Plan of the proof} \label{sec_uniform}

From now on, we fix a sequence of data $(v_i^{N},h_i^N)_{i=1,\ldots,N}$ associated with $(B_i^{N})_{i=1,\ldots,N}$ that satisfy \eqref{eq_ass0} for arbitrary $N \in \mathbb N$
and such that \eqref{eq_ass1}--\eqref{eq_ass5} hold true with
$$
j \in L^2({\Omega})  \,, \quad \rho \in L^{\infty}({\Omega})\,.
$$ 
Because of assumption \eqref{eq_ass0}, the existence result of the previous section applies so that there exists a unique generalized solution $u^N \in H^1(\mathcal F^N)$ 
to  \eqref{eq_stokesN}-\eqref{cab_stokesN}. In what follows, we extend implicitly $u^N$ by its boundary values on the $\partial B_i^N$:
$$
u^N = 
\left\{
\begin{array}{rl}
v_i^N\,, & \text{in $B_i^N,$ for $i=1,\ldots,N$}\,,\\[8pt]
u^N\,, & \text{in $\mathcal F^N.$}
\end{array}
\right.
$$
As the $B_i^N$ do not overlap and do not meet $\partial \Omega,$ it is straightforward that these velocity-fields
yield a sequence in $H^1_0(\Omega)$ of divergence-free vector-fields. Moreover, we have the property:
$$
\| \nabla u^N\|_{L^2(\mathcal F^N)} = \|  \nabla u^N\|_{L^2(\Omega)}.
$$
We also assume that \eqref{eq_ass_dmin}-\eqref{eq_ass_lambdaN} are in force. %
We have then:
\begin{align}
&\sup_{N \in \mathbb N} {Na^N} = a^{\infty} \in (0,\infty), \label{eq_ainfini} \\
& \sup_{N \in \mathbb N} \dfrac{1}{N} \sum_{i=1}^N |v_i^N|^2 = |\mathcal E^{\infty}|^{2} \in (0,\infty). \label{eq_einfini}
\end{align}
 
Our target result reads:
\begin{theorem} \label{thm_DGR}
The sequence of extended generalized solutions $(u^N)_{N \in \mathbb N}$ converges weakly in $H^1_0(\Omega)$ to $\bar{u}$ satisfying
\begin{itemize}
\item[(B1)\,] $\bar{u} \in H^1_0(\Omega)\,,$\\[-10pt]
\item[(B2)\,] ${\rm div}\, \bar{u} = 0$ on $\Omega\,,$\\[-10pt]
\item[(B3)\,] for any divergence-free $w \in {C}^{\infty}_c(\Omega)$  we have:
\begin{equation} \label{eq_SBweak}
\int_{\Omega} \nabla \bar{u}: \nabla w = {6\pi a}  \int_{\Omega}   \left[j - \rho \bar{u} \right] \cdot w \,.
 \end{equation}
 \end{itemize}
 
\end{theorem}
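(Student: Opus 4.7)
The plan is to follow the outline given in Section 1.1, organized into two stages: first establish a uniform $H^1_0(\Omega)$ bound on the extended sequence $(u^N)_{N\in\mathbb N}$ (yielding (B1)--(B2) along a subsequence), then identify the equation satisfied by any weak-$H^1$ limit by testing against a generic divergence-free $w \in C^\infty_c(\Omega)$.

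For the uniform bound, I exploit the variational characterization in Theorem \ref{thm_varcar}: the extended $u^N$ is the unique minimizer of $\|\nabla \cdot\|_{L^2(\Omega)}^2$ among divergence-free vector fields in $H^1_0(\Omega)$ taking the boundary values $v_i^N$ on each $B_i^N$. It suffices to exhibit a competitor with controlled energy. Mimicking the construction of $\tilde w_i$ in the proof of Proposition \ref{prop_truncationprocess}, I set
$$
\tilde{u}^N = \sum_{i=1}^N \Bigl( \chi^N(\cdot - h_i^N)\, v_i^N \;-\; \mathfrak B_{h_i^N,1/N,2/N}\bigl[ v_i^N \cdot \nabla \chi^N(\cdot - h_i^N) \bigr] \Bigr).
$$
Under \eqref{eq_consass2} the supports $B_\infty(h_i^N,2/N)$ are pairwise disjoint and compactly contained in $\Omega$ for $N$ large, so $\tilde u^N \in H^1_0(\Omega)$ is divergence-free with the right trace, and the Bogovskii estimate gives $\|\nabla \tilde u^N\|_{L^2}^2 \lesssim N^{-1}\sum_i |v_i^N|^2$, which is bounded thanks to \eqref{eq_ass1}. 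Extracting a subsequence yields $u^N \rightharpoonup \bar u$ weakly in $H^1_0(\Omega)$ (hence strongly in $L^q$ for $q<6$), with $\bar u$ divergence-free.

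To prove (B3) I fix a divergence-free $w \in C^\infty_c(\Omega)$ and a large parameter $\delta$, and build a covering $(T_\kappa^N)_{\kappa \in \Kappa^N}$ of $\mathrm{Supp}(w)$ by essentially disjoint cubes of width $\lambda^N$, obtained by shifting a reference grid (a standard averaging argument, to be placed in the appendix, guarantees a choice for which only few hole centers lie in a $\lambda^N/\delta$-neighborhood of the faces $\partial T_\kappa^N$). For each $\kappa$ I retain only the indices $\mathcal I_\kappa^N$ of holes whose centers are at distance at least $\lambda^N/\delta$ from $\partial T_\kappa^N$; by \eqref{eq_ass3} this set has cardinality $\lesssim N|\lambda^N|^3$, and the deletion error is $O(1/\sqrt\delta)$. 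On each $T_\kappa^N$ I use Proposition \ref{prop_truncationprocess} to replace $w$ by the combination of stokeslets $u_s^{N,\kappa}(x)=\sum_{i\in\mathcal I_\kappa^N} U^N[w(h_i^N)](x-h_i^N)$; the hypotheses \eqref{eq_ass2}--\eqref{eq_ass3} are precisely what is needed to make the cumulative error from the right-hand side of Proposition \ref{prop_truncationprocess}, summed over $\kappa$, tend to $0$. Integration by parts against $u^N$, together with the Stokes-law identity on $\partial B_i^N$ and the dual identity on $\partial T_\kappa^N$ (from conservation of normal stress), and replacement of $u^N$ by its average $\bar u_\kappa^N$ on the clean annular strip of width $\lambda^N/(2\delta)$ inside $\partial T_\kappa^N$ (an operation whose cost is $O(1/\sqrt\delta)$ via a uniform Poincar\'e--Wirtinger inequality on rescaled model annuli), reduces $I_w^N$ to
$$
I_w^N \;\simeq\; \sum_{i=1}^N \frac{6\pi}{N} w(h_i^N)\cdot v_i^N \;-\; \int_\Omega \sigma^N \cdot u^N,
$$
with $\sigma^N$ defined as in the introduction.

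The two terms pass to the limit separately: the first converges to $6\pi\int_\Omega j\cdot w$ by \eqref{eq_ass5}; for the second, \eqref{eq_ass3} gives an $L^\infty$ bound on $\sigma^N$, \eqref{eq_ass4} identifies its weak limit as $\rho w$, and weak-$L^q$/strong-$L^{q'}$ duality against $u^N \to \bar u$ yields $6\pi\int_\Omega \rho\, w\cdot \bar u$. After letting $\delta \to \infty$ the remaining error vanishes, producing \eqref{eq_SBweak}. The main obstacle is not any single estimate but the careful bookkeeping of the localization-plus-deletion procedure: one must simultaneously (i) choose the grid shift so that the holes near $\partial T_\kappa^N$ contribute a controllable error, (ii) ensure that the truncated combination of stokeslets is divergence-free and Stokes-harmonic on the averaging shell so that boundary integration by parts is legitimate, and (iii) balance the deletion cost against the $O(1/\sqrt\delta)$ Poincar\'e error so that both are uniform in $N$ and vanish as $\delta \to \infty$.
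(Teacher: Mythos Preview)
Your proposal is correct and follows essentially the same route as the paper: the uniform $H^1_0$ bound via a disjointly-supported competitor, the grid covering with a pigeonhole shift (the paper's Appendix~\ref{app_coveringlemma}), deletion of boundary-near holes, replacement of $w$ by cellwise stokeslet combinations via Proposition~\ref{prop_truncationprocess}, the Stokes-law/normal-stress integration by parts, and the $\sigma^N$ argument for the Brinkman term are all exactly the paper's ingredients. The only difference is cosmetic---you build the competitor with a Bogovskii correction while the paper uses a curl potential $\nabla\times(\tfrac12\chi^N(\cdot-h_i^N)\,v_i^N\times(\cdot-h_i^N))$---and one small slip: the error from replacing $u^N$ by its annular average $\bar u_\kappa^N$ is in fact $O(C_\delta\lambda^N)$ (vanishing with $N$, see Lemma~\ref{lem_errkappa} and \eqref{eq_err}), whereas the $O(1/\sqrt\delta)$ contributions come from the deletion of the $\mathcal Z_\delta^N$ indices.
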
 

\bigskip

Theorem \ref{thm_main} is a corollary of this theorem as (B1)-(B2)-(B3) corresponds to the generalized formulation of the Stokes-Brinkman system
\eqref{eq_SB}-\eqref{cab_SB}. The proof of this result is developed in the end of this section and the two next ones. 

\medskip

The scheme of the complete proof for Theorem \ref{thm_DGR} is as follows. We first obtain that the sequence $(u^N)_{N\in\mathbb N^*}$
is bounded in $H^1_0(\Omega).$ A straightforward consequence is that the sequence $(u^{N})_{N\in\mathbb N^*}$ is weakly-compact and we denote by $\bar{u}$ a cluster-point for the weak topology.

In the weak limit, $\bar{u}$ satisfies ${\rm div} \, \bar{u} =0$ on $\Omega.$ So $\bar{u}$ satisfies (B1) and (B2) of our theorem. 
The remainder of the proof consists in showing that 
it  satisfies (B3) also. Indeed, we remark that ${\rho}$ is the density of a probability measure. Hence $\rho \geq 0$  on $\Omega.$ 
By a simple energy estimate one may then show that, given $j \in L^2(\Omega),$ there exists at most one $\bar{u} \in H^1_0(\Omega)$ that satisfies 
simultaneously (B1)-(B2)-(B3). A direct corollary of this remark is that, if we prove that (B3) is satisfied by $\bar{u}$ we have uniqueness 
of the  possible cluster point to the sequence $(u^N)_{N\in \mathbb N}$ and the whole sequence converges to this $\bar{u}$ in $H^1_0(\Omega)-w.$

A classical method for obtaining (B3) is to fix a test-function $w$ and apply that
$$
\int_{\Omega} \nabla \bar{u}: \nabla w = \lim_{N\to \infty}\int_{\Omega} \nabla u^N : \nabla w \medskip 
$$
{(up to the extraction of a suitable subsequence of $(u^N)$)}. One may then want to use the equation satisfied by $u^N$ in order to rewrite the $N$-th integral in a way that induces the empirical measures $S_N.$
It would then be possible to apply the assumptions on the convergence of these empirical measures.
For this purpose, we fix a integer $\delta \geq 4,$ arbitrary large and we construct, for fixed $N,$ a suitable test-function ${w}^s$ (depending actually on $\delta$ and $N$) so that
\begin{itemize}
\item  by replacing $w$ with $w^s$ in $I^N$, we have:
$$
\int_{\Omega} \nabla u^N : \nabla w =\int_{\Omega} \nabla u^N : \nabla w^s + error^N
$$ 
with $error^N$ of size $1/{\delta}^{1/3}$ (plus terms depending on $N$ that vanish when $N \to \infty$),
\item when $N\to \infty$ we prove that, 
$$
\int_{\Omega} \nabla {u}^N :  \nabla w^s \to 6 \pi  a\int_{\Omega} (j - \rho \bar{u}) \cdot w + error\,,
$$
with an $error$ of size $1/\sqrt{\delta}.$
\end{itemize}
As $\delta$ can be taken arbitrary large, this yields the expected result. 

\medskip

A proof that $(u^N)_{N\in\mathbb N}$ is bounded is given in the end of this section. 
The construction of the modified test-function $w^s$ and the computation of the size of the error terms $error^N$
are provided in the next section. We complete the proof by computing the asymptotics of the integrals involving
$w^s$ and the computation of the error term $error$ in Section \ref{sec_pfpart2}.

\medskip

We start with the boundedness lemma:
\begin{lemma}
Let $a^\infty$ and $\mathcal E^{\infty}$ be given by \eqref{eq_ainfini}-\eqref{eq_einfini}. For $N \in \mathbb N$ sufficiently large, there holds:
$$
\|u^N\|_{H^1_0(\Omega)} \lesssim \sqrt{a^{\infty}} \, {\mathcal E}^{\infty}\,.
$$
\end{lemma}

\begin{proof}
We provide a proof of this lemma by applying the variational characterization of solutions to the Stokes problem \eqref{eq_varcar}:
we construct a divergence-free lifting of boundary conditions on $\partial B_i^N$ and $\partial \Omega.$ Any such candidate yields a bound  above on $\|\nabla u^N\|_{L^2(\mathcal F^N)}.$

\medskip

Given $N \in \mathbb N,$ we set:
    $$
    v^{N}(x) = \sum_{i=1}^N \nabla \times \left( \dfrac{\chi_{a^N}(x-h^N_i)}{2}\,  v^N_i \times (x- h^N_i)\right)  =: \sum_{i=1}^{N} v_{i}(x).
    $$  
Then, $v^N \in C^{\infty}_c(\mathbb R^3)$ is the curl of a smooth potential vector so that ${\rm div} \, v^N = 0\,.$ Because of assumption \eqref{eq_ass_dmin}, we have that, for $N$ sufficiently large:
$$
d_{min}^N > 4a^N.
$$
Since $\chi_{a^N}$ has support in $B_{\infty}(0, 2a^N)$ we have that $\text{Supp}(v_i) \subset B_{\infty}(h_i^N,2a^N)$ and the $(v_i)_{i=1,\ldots,N}$ have disjoint supports. Because $\chi_{a^N} = 1$ on $B(0,a^N) \subset B_{\infty}(0,a^N) $ we derive further that, for $i \in \{1,\ldots,N\}:$
\begin{eqnarray*}
v_i(x)  &=& 0\,, \quad \text{on $\partial \Omega \cup \bigcup_{j\neq i} B_j^N $}\,,\\
v_i(x) &=& 0\,, \quad  \text{on $B_j^N $ for $j \neq i$}\,,\\[4pt]
v_i(x) &=& \nabla \times \left( \dfrac{1}{2}\,  v^N_i \times (x- h^N_i)\right) = v_i^N\,,     \quad \text{ on $B_i^N$}.
\end{eqnarray*}
By combination, we obtain:
$$
\begin{array}{rcll}
v^N(x) &=& v_i^N\,, & \text{ on $\partial B_i^N$}\,, \quad \forall \, i=1,\ldots,N\,, \\[4pt]
v^N(x) &=& 0\,, & \text{ on $\partial \Omega$}\,.
\end{array}
$$
We have then by {Theorem \ref{thm_varcar}} that:
\begin{equation} \label{eq_rajout1}
\|\nabla u^{N}\|_{L^2(\mathcal F^N)}  \leq \|\nabla v^{N}\|_{L^2(\mathcal F^N)} = \left(\sum_{i=1}^N \|\nabla v_i\|^2_{L^2(\mathbb R^3)} \right)^{\frac 12}.
\end{equation}

 \medskip

For arbitrary $N \in \mathbb N$ and $i \in \{1,\ldots,N\},$ there holds:
 \begin{eqnarray*}
|\nabla v_{i}(x)| &\lesssim&   |\nabla \chi_{a^N}(x-h_i^N)| |v^N_i| + |\nabla^2 \chi_{a^{N}}(x-h_i^N)| |v^N_i||x-h_i^N|  \\
			&\lesssim& \dfrac{1}{a^N} \left( \left|\nabla\chi\left(\frac{x-h_i^N}{a^N}\right)\right| + \left|\nabla^2 \chi\left(\frac{x-h_i^N}{a^N}\right)\right| \right)|v^N_i|\,\\
 \end{eqnarray*}
 Consequently, by a standard scaling argument:
 $$
 \int_{\mathbb R^3} |\nabla v_i(x)|^2{\rm d}x \lesssim |a^N| \left( \int_{\mathbb R^3} |\nabla \chi(|y|)|^2 + |\nabla^2 \chi(|y|)|^2 {\rm d}y \right) |v^N_i|^2\,.
 $$
Since $a^N \leq a^{\infty}/N$ uniformly, we combine the previous computation into:
$$
 \|\nabla v^{N}\|_{L^2(\mathcal F^N)}^2  \lesssim   \dfrac{a^{\infty}}{N} \sum_{i=1}^{N} |v^N_i|^2\, \lesssim a^{\infty} | \mathcal E^{\infty}|^2.
 $$
Note that $\chi$ is fixed a priori so that all constants depending on $\chi$ may be considered as non-significant.  
\end{proof}
 
\bigskip

\section{Proof of Theorem \ref{thm_main} -- Computations for finite $N$} \label{sec_prfpart1}

From now on, we assume that $u^N$ converges weakly to $\bar{u}$ in $H^1_0(\Omega)$ (we do not relabel the subsequence for simplicity) and we fix a divergence-free $w \in C^{\infty}_c(\Omega).$
We recall that our aim is to compute the scalar product:
$$
\int_{\Omega} \nabla \bar{u} :  \nabla w 
$$
by applying that:
$$
\int_{\Omega} \nabla \bar{u} :  \nabla w 
 = \lim_{N \to \infty}  I^N \text{ with } I^N = \int_{\Omega} \nabla {u}^N : \nabla w\,, \quad \forall \, N \in \mathbb N\,.
$$

\medskip

We explain now the construction of the modified test-function $w^s.$ The integers $\delta \geq 4$ and $N \in \mathbb N^*$ are fixed in the remainder of this section. 
Some restrictions on the values of these parameters may be added in due course. Applying the construction in Appendix \ref{app_coveringlemma}, we obtain 
$(T^N_{\kappa})_{\kappa \in \mathbb Z^3}$ a covering of $\mathbb R^3$ with cubes of width $\lambda^N$ such that
denoting: 
$$
\mathcal Z_{\delta}^N := \left\{i \in \{1,\ldots,N\} \text{ s.t. } \text{dist} \left( h_i^N\,, \bigcup_{\kappa \in \mathbb Z^3} \partial T^N_{\kappa} \right) < \dfrac{\lambda^N}{\delta} \right\}\,,
$$
there holds:
\begin{equation} \label{eq_couloir}
\dfrac{1}{N}\sum_{i \in \mathcal Z_{\delta}^N} (1+|v_i^N|^2) \leq \dfrac{12}{\delta} \dfrac{1}{N} \sum_{i=1}^N (1+|v_i^N|^2)   \leq \dfrac{12 (1+|\mathcal E^{\infty}|^2)}{\delta}\,.
\end{equation}
Moreover, since $\lambda^N \to 0$ and $\text{Supp}(w) \Subset \Omega,$ for large $N,$ keeping only the indices $\mathcal K^N$ such that $T_{\kappa}^N$ intersects $\text{Supp}(w),$ we obtain a covering $(T_{\kappa}^N)_{\kappa \in \Kappa^N}$ 
of $\text{Supp}(w)$ such that all the cubes are included in $\Omega$ (see the appendix for more details). 
We  do not make precise the set of indices $\Kappa^N$. 
The only relevant property to our computations is that 
\begin{equation} \label{eq_propKappaN}
\card \Kappa^N \leq {|\Omega|}/|\lambda^{N}|^3 \,.
\end{equation}
This inequality is derived by remarking that the $T^N_{\kappa},$ $\kappa \in \Kappa^N,$ are disjoint 
cubes of volume $|\lambda^N|^3$ that are all included in $\Omega.$  
Associated to this covering, we introduce the following notations. For arbitrary $\kappa \in \Kappa^N,$
we set 
\begin{align*}
& \mathcal I_{\kappa}^N := \{ i \in \{1,\ldots, N \} \text{ s.t. } h_{i}^N \in T_{\kappa}^N\}\,, \quad M^N_{\kappa} := \card \mathcal{I}_{\kappa}^N \,, \\
&\mathcal I^N := \bigcup_{\kappa \in \Kappa^N} \mathcal I_{\kappa}^N\,.
\end{align*}
We note that \eqref{eq_ass_concentration} implies:
\begin{equation} \label{eq_propMkappa}
 M_{\kappa}^N \lesssim |\lambda^N|^3 N \,, \quad \forall \, \kappa \in \Kappa^{N}\,,  \quad \forall  \, N \in \mathbb N\,.
\end{equation}
and also that, since the $(T_{\kappa}^N)_{\kappa \in \Kappa^N}$ are disjoint, we have:
\begin{equation} \label{eq_propMkappa2}
 \sum_{\kappa \in \Kappa^N} M_{\kappa}^N \leq  N \,,  \quad \forall  \, N \in \mathbb N\,.
\end{equation}
In brief, the set of indices $\{1,\ldots,N\}$ contains the two important subsets: 
\begin{itemize}
\item the subset $\mathcal I^N$ contains all the indices that are "activated" in our computations,
\item the subset $\mathcal Z^N_{\delta}$ contains the indices that are close to boundaries of the partition.
\end{itemize}
We emphasize that $\mathcal Z^N_{\delta}$ contains indices that can be in both $\mathcal I^N$ and its complement.
We also point out that, by assumption \eqref{eq_ass_dmin}-\eqref{eq_ass_lambdaN}, we have that $a^N$ decays like $1/N$ while $\lambda^N$ decays slowlier than $1/N^{1/3}$. 
In particular, for $N$ sufficiently large, given any $i \in \{1,\ldots,N\} \setminus \mathcal Z^{N}_{\delta}$ the center of $B_i^N = B(h_i^N,a^N)$ is contained in the $\lambda^N/\delta$-core of $T_{\kappa}^N$ which implies
that $B_i^N \subset T_{\kappa}^N$ for some $\kappa$ and  $B_i^N \cap T_{\kappa'}^N = \emptyset$ for $\kappa' \neq \kappa.$ On the other hand, if $i \in \mathcal Z^{N}_{\delta},$ since $a^N$ 
is much smaller than the width of the cubes $T_{\kappa}^N,$ we have that $B(h_i^N,2a^N)$ intersects $T_{\kappa}^N$ for at most $8$ distinct values of $\kappa \in \mathbb Z^3.$ This properties will be recalled in due course below.

\medskip

We construct then $w^{s}$ piecewisely on the covering of $\text{Supp}(w).$ Given $\kappa \in \mathcal \Kappa^N,$ we set:
\begin{equation} \label{eq_wsN}
w^s_{\kappa}(x) = \sum_{i \in \mathcal I_{\kappa}^N \setminus \mathcal Z_{\delta}^N} U^{a^N}[w(h_i^N)](x-h_i^N)\,, \quad \forall \, x \in \mathbb R^3\,,
\end{equation}
and 
$$
w^{s} = \sum_{\kappa \in \Kappa^N} w^s_{\kappa}  \mathbf{1}_{T_{\kappa}^N}.
$$
We  note that $w^{s} \notin H^1_0(\mathcal F^N)$ because of jumps at interfaces $\partial T_{\kappa}^N.$ 
 It will be sufficient for our purpose that $w^s \in H^1(\mathring{T}_{\kappa}^N)$ 
for arbitrary $\kappa \in \Kappa^{N}.$ In a cube $\mathring{T}_{\kappa}^N$ the test function $w^s$ is thus a combination of stokeslets centered in the $h_i^N$ that are contained
in the cell. We delete from this combination the centers that are too close to $\partial T_{\kappa}^N$ (namely $\lambda^N/\delta$-close to $\partial T_{\kappa}^N$).

\medskip

We proceed by proving that we make a small error by replacing $w$ with $w^s$ in $I^N.$ We emphasize that, in the next statement as in the following ones, we use the symbol $\lesssim$ for inequalities in which, on the right-hand side, we keep only the terms depending on the parameters 
$\delta$ and $N$ (unless other quantities are required to make the computations more clear): 
\begin{proposition} \label{prop_decomposition}
There exists a constant $C_{\delta}$ such that, for $N$ sufficiently large,  there holds:
\begin{multline} \label{eq_decomposition}
\left| 
\int_{\Omega} \nabla u^N : \nabla w - \sum_{\kappa \in \Kappa^N} \int_{T_{\kappa}^N} \nabla u^N : \nabla w^s  
\right| \\
\lesssim \left( \frac 1{\delta^{\frac 13}}   + \dfrac{1}{|Nd_{min}^N|^{\frac 13}} + C_{\delta}  \left(\lambda^N + |N^{1/6}\lambda^N|^2 \right)  \right)\,.
\end{multline}
\end{proposition}
\begin{proof}
We split the proof in several steps by introducing different intermediate 
test-functions.

\medskip

\paragraph{\bf \em First step: Construction of auxiliary test-functions} Let fix $\kappa \in \Kappa^N,$ and consider the Stokes problem 
on $\mathring{T}_{\kappa}^N \setminus \bigcup_{i \in \mathcal I^N_{\kappa}\setminus \mathcal Z_{\delta}^N} \overline{B_i^N}$ with boundary conditions:
\begin{equation} \label{cab_stokesenplus}
\left\{
\begin{array}{rcll}
u(x) &=& w(x)  \,, & \text{ on $\partial B_{i}^N$ for $i \in  \mathcal I^N_{\kappa}\setminus \mathcal Z_{\delta}^N$}\,, \\[6pt]
u(x) &=& 0 \,, & \text{ on $\partial T_{\kappa}^N$}\,.
\end{array}
\right.
\end{equation}
We note that this problem enters the framework of Section \ref{sec_truncationprocess}. Indeed, let denote:
$$
d_{m}^{\kappa} := \min_{i \neq j \in  \mathcal I^N_{\kappa}\setminus \mathcal Z_{\delta}^N}  |h_i^N - h_j^N|, 
$$ 
and remark that, since we deleted the indices of $\mathcal Z_{\delta}^N,$ we have:
$$
\min_{i \in \mathcal I^N_{\kappa}\setminus \mathcal Z_{\delta}^N} {\rm dist}(h_i^N,\partial T_{\kappa}^N) \geq \dfrac{\lambda^N}{\delta}. 
$$ 
Moreover, since $d_{m}^{\kappa}$ and $\lambda^N$ converge to $0$ much slowlier than the radius $a^N$ of the particles, there holds for $N$ sufficiently large:
\begin{equation} \label{eq_boundbelowdkappam}
\min \left( d_{m}^{\kappa}, \frac{\lambda^N}{\delta} \right) \geq 4a^N\,.
\end{equation}
whatever the value of $\kappa \in \Kappa^N.$

\medskip

So, for $N$ sufficiently large, assumption \eqref{eq_hypdm} is satisfied and   the arguments developed in {Section \ref{sec_truncationprocess}} entail that there exists a unique generalized
solution to the Stokes problem on $\mathring{T}_{\kappa}^N \setminus \bigcup_{i \in \mathcal I^N_{\kappa}\setminus \mathcal Z_{\delta}^N} \overline{B_i^N}$
with boundary condition \eqref{cab_stokesenplus}. We denote this solution by $\bar{w}_{\kappa}$.  We keep notation $\bar{w}_{\kappa}$ to denote its extension to $\Omega$ (by $w$ on the holes and
by $0$ outside $\mathring{T}_{\kappa}^N$). As $\mathring{T}_{\kappa}^N \subset \Omega,$  we obtain a divergence-free $\bar{w}_{\kappa} \in H^1_0(\Omega).$ We then add the $\bar{w}_{\kappa}$ into:
$$
\bar{w} = \sum_{\kappa \in \Kappa^N} \bar{w}_{\kappa}\,.
$$  
This vector-field satisfies:
\begin{itemize}
\item $\bar{w} \in H^1_0(\Omega),$
\item ${\rm div} \, \bar{w} = 0$ on $\Omega,$
\item $\bar{w} = w$ on $B_i^N$ for all $i \in \{1,\ldots,N\} \setminus \mathcal Z_{\delta}^N.$
\end{itemize}
The only statement that needs further explanation is the last one. By construction, we have clearly that $\bar{w} = w$ on $B_{i}^N$ for all $i \in \mathcal I^N \setminus \mathcal Z_{\delta}^N.$
When $i \in \{1,\ldots,N\} \setminus ( \mathcal I^N \cup \mathcal Z^N_{\delta})$ we have on the one hand that 
$h_i^N\notin \mathcal I^N$ so that the only index $\kappa$ satisfying $h_i^N \in T_{\kappa}^N$ is not in  $\Kappa^N.$ This entails that $w = 0$ on $B_i^N.$ On the other hand we have that $i \notin \mathcal Z_{\delta}^N$ so that $h_i^N$ is in the $\lambda^N/\delta$ core of this $T_{\kappa}^N$ and $B(h_i^N,a^N) \subset  T_{\kappa}^N.$ This entails that $\bar{w} = 0$ on $B_i^N.$ Finally, we obtain that $\bar{w} = w = 0$ on $B_i^N.$ 

\medskip

We correct now the value of $\bar{w}$ on the $B_{i}^N$ when $i \in \mathcal Z_{\delta}^N$ in order that it fits the same boundary
conditions as $w$ on $\mathcal F^N.$ We set:
\begin{eqnarray*}
\tilde{w} &=& \sum_{i \in \mathcal Z_{\delta}^N} \left[ \chi_{a^N}(\cdot -h_i^N) w -  \mathfrak B_{h_i^N,a^N,2a^N} [x \mapsto w(x) \cdot \nabla \chi_{a^N}(x-h_i^N)] \right]  \\
		&& \; + \prod_{i \in \mathcal Z_{\delta}^N} (1- \chi_{a^N}(\cdot -h_i^N)) \bar{w} + \sum_{i \in \mathcal Z_{\delta}^N} \mathfrak B_{h_i^N,a^N,2a^N} [x \mapsto \bar{w}(x) \cdot \nabla \chi_{a^N}(x-h_i^N)]\,.
\end{eqnarray*}
One may interpret the construction of $\tilde{w}$ as follows. The sum on the first line creates a divergence-free lifiting of the boundary conditions prescribed by $w$ on the $\partial B_i^N$   for $i \in \mathcal Z_{\delta}^N$.
On the second line is a divergence-free truncation of $\bar{w}$ that creates a vector-field vanishing on $\cup_{i \in \mathcal Z_{\delta}^N} B_i^N.$  We remark that this vector-field is well-defined since, by straightforward integration by parts, we have:
$$
\int_{A(h_i^N,a^N,2a^N)} \bar{w}(x) \cdot \nabla \chi^N(x-h_i^N) {\rm d}x = \int_{A(h_i^N,a^N,2a^N)}  w(x) \cdot \nabla \chi^N(x-h_i^N) {\rm d}x = 0\,, \quad \forall \, i \in \mathcal Z_{\delta}^N.
$$
Hence, we may apply the Bogovskii operator which lifts the divergence term in the brackets with a vector-field vanishing on the boundaries of $A(h_i^N,a^N,2a^N)$
that we extend by $0$ on $\mathbb R^3 \setminus A(h_i^N,a^N,2a^N).$

\medskip

For $N$ sufficiently large, the family of balls $(B_{\infty}(h_{i}^N,2a^N))_{i=1,\ldots,N}$ are disjoint and included in $\Omega.$
Consequently, direct computations show that ${\rm div}\, \tilde{w} = 0$ on $\Omega,$ and that the truncations that we perform in $\tilde{w}$ do not perturb the value of $\bar{w}$ neither on the $B_i^N$ for $i \in \{1,\ldots,N\} \setminus \mathcal Z_{\delta}^N$ nor on $\partial \Omega.$ This remark entails that
\begin{itemize}
\item for  $i \in \{1,\ldots,N\} \setminus \mathcal Z_{\delta}^N$:  
$$
\tilde{w}(x) = \bar{w}(x) = w(x)\,, \quad  \text{on $B_{i}^N$}\,,
$$
\item for $i \in  \mathcal Z_{\delta}^N$: 
$$
\tilde{w}(x)= \chi_{a^N}(x -h_i^N) w(x)  = w(x)\,, \quad  \text{on $B_{i}^N$}\,,
$$ 
\item $\tilde{w}(x) = 0\,,$ on $\partial \Omega.$
\end{itemize} 

Consequently, by restriction, there holds that $w - \tilde{w} \in H^1_0(\mathcal F^N)$ is divergence-free. 
As $u^N$ is a generalized solution to a Stokes problem on $\mathcal F^N$ we have thus:
$$
\int_{\mathcal F^N} \nabla u^N : \nabla (w- \tilde{w}) = 0.
$$
We rewrite this identity as follows:
\begin{equation} \label{eq_cpreskesa}
\int_{\Omega} \nabla u^N : \nabla w = \sum_{\kappa \in \Kappa^N} \int_{T^N_{\kappa}} \nabla u^N : \nabla w^s - E_1 - E_2\,,
\end{equation}
with :
\begin{eqnarray*}
E_1 &=& \sum_{\kappa \in \Kappa^N} \int_{T^N_{\kappa}} \nabla u^N : \nabla (w^s_{\kappa} - \bar{w}_{\kappa})  \,, \\
E_2 &=&  \int_{\Omega} \nabla u^N : \nabla (\bar{w} - \tilde{w})\,.  
\end{eqnarray*}

\paragraph{\bf \em Second step: Control of error term $E_1$.}
 For arbitrary $\kappa \in \Kappa^N,$ we apply {Proposition \ref{prop_truncationprocess}} to $\bar{w}_{\kappa}$
 and its corresponding combination of stokeslets (namely, the restriction $w^s_{\kappa}$ of $w^s$ to $\mathring{T}^N_{\kappa}$). 
 We have thus (keeping only the largest terms):
 \begin{eqnarray*}
 \|\nabla (w^s_{\kappa} - \bar{w}_{\kappa})\|_{L^2(T^N_{\kappa})} & \lesssim  &
   \sqrt{M_{\kappa}^N|a^N|^{3}}\left[  1 +  \dfrac{|M_{\kappa}^{N}|^{2/3}}{d^{\kappa}_{m}} +  \dfrac{a^N |M_{\kappa}^N|^{1/3}}{|d^{\kappa}_{m}|^2}    \right] +C_{\delta}M_{\kappa}^Na^N \left[ \lambda^N  + \frac 1{\lambda^N}\right]^{\frac 12}\\
  & \lesssim & \sqrt{\frac{M_{\kappa}^N}{N}} \left[ \frac{1}{N} + \dfrac{|M_{\kappa}^{N}|^{2/3}}{Nd^{\kappa}_{m}} + C_{\delta} \sqrt{\dfrac{M_{\kappa}^N}{N{\lambda^N}}} \right]
 \end{eqnarray*}
We applied here that $\card ( \mathcal I_{\kappa}^N \setminus \mathcal Z_{\delta}^N) \leq \card \mathcal I_{\kappa}^N = M^N_{\kappa} \geq 1$ and that $Nd_{m}^{\kappa} \geq N d_{min}^N >> 1$ which entail: 
$$
\dfrac{|a^N|^2|M_{\kappa}^N|^{1/3}}{|d^{\kappa}_{m}|^2} \lesssim \dfrac{1}{Nd_{m}^{\kappa}|M_{\kappa}^N|^{1/3}}\dfrac{|M_{\kappa}^{N}|^{2/3}}{Nd^{\kappa}_{m}}  << \dfrac{|M_{\kappa}^{N}|^{2/3}}{Nd^{\kappa}_{m}} .
$$
Adding \eqref{eq_ainfini} and applying a standard Cauchy-Schwarz inequality together with \eqref{eq_propMkappa}-\eqref{eq_propMkappa2} yields:
\begin{eqnarray*}
|E_1 | &\lesssim&  \sum_{\kappa \in \Kappa^N } \|\nabla u^N\|_{L^2(T^N_{\kappa})} \sqrt{\frac{M_{\kappa}^N}{N}} \left[ \frac{1}{N} + \dfrac{|M_{\kappa}^{N}|^{2/3}}{Nd^{\kappa}_{m}} + C_{\delta} \sqrt{\dfrac{M_{\kappa}^N}{N{\lambda^N}}} \right]\,,\\
	& \lesssim &  \left( \sum_{\kappa \in \Kappa^N} \|\nabla u^N\|^2_{L^2(T^N_{\kappa})}\right)^{\frac 12} \left( \dfrac{1}{{N}} + \dfrac{|\lambda^N|^2}{N^{1/3}d_{min}^N} + C_{\delta} |\lambda^N|\right)\,.
\end{eqnarray*}
Here, we note again that, by construction, the $T^N_{\kappa}$ are disjoint and included in $\Omega$ so that
$$ 
\left( \sum_{\kappa \in \Kappa^N} \|\nabla u^N\|^2_{L^2(T^N_{\kappa})}\right)^{\frac 12} \leq \|\nabla u^N\|_{L^2(\Omega)} .
$$
Applying the uniform bound for $u^N$ in $H^1_0(\Omega)$  and  introducing \eqref{eq_boundbelowdkappam} and \eqref{eq_ass_lambdaN}, we conclude then that:
\begin{equation} \label{eq_boundE1}
|E_1 | \lesssim  \left( \dfrac{1}{|Nd_{min}^N|^{1/3}} + C_{\delta} |\lambda^N|\right)
\,.
\end{equation}

\paragraph{\bf \em Third step: Control of error term $E_2$.} 
As for the second term, we replace $\tilde{w}$ by its explicit construction. We remark that, because the supports of the $(\chi_{a^N}(\cdot - h_i^N))_{i \in \{1,\ldots,N\}}$ are disjoint (as $d_{min}^N > 4a^N$), we have:
$$
1 - \prod_{i \in \mathcal Z_{\delta}^N} (1- \chi_{a^N}(x-h_i^N)) =   \sum_{i \in \mathcal Z_{\delta}^N} \chi_{a^N}(x-h_i^N)\,, \quad \forall \, x \in \Omega.
$$
Consequently, we  split:
\begin{eqnarray*}
\bar{w} - \tilde{w} &=& \sum_{i \in  \mathcal Z_{\delta}^N} \left[ \chi_{a^N}(\cdot -h_i^N) \bar{w} -  \mathfrak B_{h_i^N,a^N,2a^N} [x \mapsto \bar{w}(x) \cdot \nabla \chi_{a^N}(x-h_i^N)] \right]  \\
		&& \; -  \sum_{i \in \mathcal Z_{\delta}^N} \left[ \chi_{a^N}(\cdot - h_i^N) {w} -   \mathfrak B_{h_i^N,a^N,2a^N} [x \mapsto {w}(x) \cdot \nabla \chi_{a^N}(x-h_i^N)]\right]\,.
\end{eqnarray*}
We note in particular that:
$$
\text{Supp} ( \bar{w} - \tilde{w}) \subset \bigcup_{i \in \mathcal Z^N_{\delta}} B_{\infty}(h_i^N,2a^N).
$$
Since the balls appearing on the right-hand side of this identity are disjoint, we have:
\begin{eqnarray*}
|E_2| &\leq&  \sum_{i \in \mathcal Z_{\delta}^N} \int_{B_{\infty}(h_i^N,2a^N)} |\nabla u^N|  | \nabla  ( \bar{w} - \tilde{w})|  \,, \\
	& \leq & \left( \sum_{i \in \mathcal Z_{\delta}^N} \|\nabla u^N\|^2_{L^2(B_{\infty}(h_i^N,2a^N))}\right)^{\frac 12}\left( \sum_{i\in \mathcal Z_{\delta}^N} \| \nabla (\bar{w} - \tilde{w})\|^2_{L^2(B_{\infty}(h_i^N,2a^N)))}\right)^{\frac 12} \,,\\
	&\lesssim & \left( \sum_{i\in \mathcal Z_{\delta}^N} \| \nabla ( \bar{w} -  \tilde{w})\|^2_{L^2(B_{\infty}(h_i^N,2a^N)))}\right)^{\frac 12}\,.
\end{eqnarray*}
Given $i \in \mathcal Z^N_{\delta},$ direct computations and application of {Lemma \ref{lem_div}} to the Bogovskii operator $\mathfrak B_{h_i^N,a^N,2a^N}$ entail that (recalling \eqref{eq_ainfini} to bound $a^N$):
$$
\| \nabla\left(  \bar{w} - \tilde{w}\right) \|^2_{L^2(B_{\infty}(h_i^N,2a^N))}  \lesssim \frac 1N \|w\|^2_{W^{1,\infty}} + N^2 \|\bar{w}\|^2_{L^2(B_{\infty}(h_i^N,2a^N))}  + \|\nabla \bar{w}\|^2_{L^2(B_{\infty}(h_i^N,2a^N))}\,.
$$
Consequently, we have the bound:
\begin{equation} \label{eq_tobound}
\sum_{i\in \mathcal Z_{\delta}^N} \| \nabla (\bar{w} - \tilde{w}) \|^2_{L^2(B_{\infty}(h_i^N,2a^N)))} \lesssim E_{2,\infty} + N^2 E_{2,0} + E_{2,1}
\end{equation}
with :
$$
E_{2,\infty} := \sum_{i\in \mathcal Z_{\delta}^N}\frac 1N \|w\|^2_{W^{1,\infty}} \quad  E_{2,0} := \sum_{i\in \mathcal Z_{\delta}^N} \|\bar{w}\|^2_{L^2(B_{\infty}(h_i^N,2a^N))} \quad 
E_{2,1} := \sum_{i\in  \mathcal Z_{\delta}^N}\|\nabla \bar{w}\|^2_{L^2(B_{\infty}(h_i^N,2a^N))}.
$$
We bound now these three terms independantly.

\medskip

First, we recall that, by choice of the covering (see \eqref{eq_couloir}), we have:
\begin{equation} \label{eq_propdelta}
\sharp \mathcal Z_{\delta}^N  \lesssim \dfrac{N}{\delta}.
\end{equation}
Consequently, there holds:
\begin{equation}\label{eq_E2infini}
E_{2,\infty} = \sum_{i\in \mathcal Z_{\delta}^N} \dfrac{1}{N} \|w\|^2_{W^{1,\infty}} \lesssim \dfrac{ 1 }{\delta}\,.
\end{equation}

\medskip

Second, we remark that $(T_{\kappa}^N)_{\kappa \in \Kappa^N}$ is also a covering of Supp$(\bar{w}),$ so that we have:
$$
E_{2,0} =  \sum_{i\in \mathcal Z_{\delta}^N}  \|\bar{w}\|^2_{L^2(B_{\infty}(h_i^N,2a^N))} = \sum_{i \in \mathcal Z_{\delta}^N} \sum_{\kappa \in \Kappa^N} \|\bar{w}\|^2_{L^2(B_{\infty}(h_i^N,2a^N) \cap T_{\kappa}^N)} \, .
$$
Given $\kappa \in \Kappa^N,$ we apply now {Proposition \ref{prop_comparestks2}} to approximate $\bar{w}_{\kappa}$ by a combination of stokeslet $\bar{w}_{\kappa}^s.$
We remark here that because of \eqref{eq_ass_lambdaN} we have that:
$$
\dfrac{|M_{\kappa}^N|^{2/3} a^N}{d_{min}^N} \lesssim \dfrac{|\lambda^N|^2}{N^{1/3} d_{min}^N} \lesssim \dfrac{1}{|Nd_{min}^N|^{1/3}} << 1.
$$
Consequently, the assumptions of {Proposition \ref{prop_comparestks2}} are satisfied for all $\kappa \in \Kappa^N$ for $N$ sufficiently large.
We have then:
$$ 
E_{2,0} \lesssim  \sum_{\kappa \in \Kappa^N}  \sum_{i \in \mathcal Z_{\delta}^N}  \|\bar{w}_{\kappa} - \bar{w}^s_{\kappa} \|^2_{L^2(B_{\infty}(h_i^N,2a^N) \cap T_{\kappa}^N)} + 
		\sum_{i \in \mathcal Z_{\delta}^N} \sum_{\kappa \in \Kappa^N} \|\bar{w}_{\kappa}^s\|^2_{L^2(B_{\infty}(h_i^N,2a^N) \cap T_{\kappa}^N)}\,.
$$

We compute the terms involving $\bar{w}^s_{\kappa}$ by using the explicit formula \eqref{eq_wsN} and the expansion of stokeslet \eqref{eq_Stokesexpansion}. 
To this end, we remind that $B_{\infty}(h_i^N,2a^N) \cap T_{\kappa}^N \neq \emptyset$ implies that $h_i^N$ is in the $2a^N$-neighborhood of $T_{\kappa}^N.$ 
As the width of a $T_{\kappa}^N$ is much larger than $2a^N$ this implies that this property is satisfied by at most $8$ cubes. We have thus:
$$
\sum_{\kappa \in \Kappa^N} \|\bar{w}_{\kappa}^s\|^2_{L^2(B_{\infty}(h_i^N,2a^N) \cap T_{\kappa}^N)} \leq 8 \sup_{\kappa \in \Kappa^N} \|\bar{w}_{\kappa}^s\|^2_{L^2(B_{\infty}(h_i^N,2a^N))}\,, \quad \forall \, i \in \mathcal Z_\delta^N\,.
$$
Given $i\in \mathcal Z_{\delta}^N,$  $\kappa \in \Kappa^N$ and $j \in \mathcal I^N_{\kappa} \setminus \mathcal Z_{\delta}^N$ the distance between $h_{j}^N$
and $B_{\infty}(h_i^N,2a^N)$ is larger than $d^N_{min}/2.$ Recalling that there are at most $M^N_{\kappa}$ indices in $\mathcal I_{\kappa}^N \setminus \mathcal Z^{N}_{\delta}$ 
-- and that, by the first item of proposition \ref{prop_comparestks2}, the coefficients of the combination of stokeslet $\bar{w}_{\kappa}^s$ are close to the $w(h_i)$ -- we derive the bound:  
$$
|\bar{w}^{s}_{\kappa}(x)| \lesssim \dfrac{M^N_{\kappa}a^N}{d^N_{min}} \,, \quad \forall \, x \in B_{\infty}(h_i^N,2a^N)\,.
$$ 
Consequently, there holds
\begin{eqnarray*}
\| \bar{w}^s_{\kappa}\|^2_{L^2(B_{\infty}(h_i^N,2a^N))} &\lesssim&  \dfrac{|M^N_{\kappa}|^2|a^N|^5}{|d_{min}^N|^2}  \lesssim  \dfrac{|\lambda^N|^6}{N^3 |d_{min}^N|^2}  \lesssim \dfrac{1}{N^{3}} \,,				
\end{eqnarray*}
where we applied  \eqref{eq_propMkappa} in the before last inequality and assumption \eqref{eq_ass_lambdaN} for the last one. This yields, due to our choice of covering:
\begin{eqnarray}
\notag \sum_{i \in \mathcal Z_{\delta}^N} \sum_{\kappa \in \Kappa^N} \|\bar{w}_{\kappa}^s\|^2_{L^2(B_{\infty}(h_i^N,2a^N) \cap T_{\kappa}^N)} &\lesssim&  \dfrac{1}{N^2}\sum_{i \in \mathcal Z_{\delta}^N}\dfrac{1}{N} \\
																									& \lesssim &\dfrac{ 1}{\delta N^2 } \label{eq_E201}	\,.
\end{eqnarray}

For the remainder terms, we proceed by applying a H\"older inequality. For arbitrary $\kappa \in \Kappa^N,$ we introduce   $\mathcal Z_{\delta,\kappa}^N$ the set of indices $i \in \mathcal Z_{\delta}^N$ such that $B_{\infty}(h_i^N,2a^N)\cap T_{\kappa}^N \neq \emptyset,$ 
we infer by a H\"older inequality that:
\begin{eqnarray*}
\sum_{i \in \mathcal Z_{\delta,\kappa}^N} \| \bar{w}_{\kappa} - w^s_{\kappa}\|^2_{L^2(B_{\infty}(h_i^N,2a^N) \cap T_{\kappa}^N)} &\lesssim&  \dfrac{|\sharp \mathcal Z_{\delta,\kappa}^N|^{\frac 23}}{N^2}  \| (\bar{w}_{\kappa} - w^s_{\kappa})\|^2_{L^6(T_{\kappa}^N)} \, , 
\end{eqnarray*}
and, by applying again a H\"older inequality:
\begin{multline*}
\sum_{\kappa \in \Kappa^N}  \sum_{i \in \mathcal Z_{\delta}^N}  \|\bar{w}_{\kappa} - w^s_{\kappa} \|^2_{L^2(B_{\infty}(h_i^N,2a^N) \cap T_{\kappa}^N)}   
 \lesssim   \dfrac{1}{N^2}\left[ \sum_{\kappa \in \Kappa^N} \sharp \mathcal Z_{\delta,\kappa}^N\right]^{\frac 23}  \left[ \sum_{\kappa \in \Kappa^N} \| (\bar{w}_{\kappa} - w^s_{\kappa})\|^6_{L^6(T_{\kappa}^N)}\right]^{1/3}.
\end{multline*}
At this point, we remark again that $\mathcal Z^N_{\delta,\kappa} \subset \mathcal Z^N_{\delta}$ and that one index $i\in \mathcal Z^N_{\delta}$ belongs to at most $8$ distinct sets $\mathcal Z_{\delta,\kappa}^N$ so that:
$$
\left[ \sum_{\kappa \in \Kappa^N} \sharp \mathcal Z_{\delta,\kappa}^N\right]^{\frac 23} \lesssim \left[ \sharp \mathcal Z_{\delta}^N \right]^{\frac 23} \lesssim \left|\dfrac{N}{\delta} \right|^{\frac 23} \,.
$$
We  also remark that Proposition \ref{prop_comparestks2},  combined with \eqref{eq_ainfini}, \eqref{eq_ass_lambdaN}, \eqref{eq_propMkappa} and \eqref{eq_boundbelowdkappam}
entails:
\begin{eqnarray*}
 \| (\bar{w}_{\kappa} - \bar{w}^s_{\kappa})\|^6_{L^6(T_{\kappa}^N)}  &\lesssim&  \left(\dfrac{M_{\kappa}^N}{N^3} +   \dfrac{|M_{\kappa}^N|^{5/3}}{N^5|d_{min}^N|^4}+   C_{\delta}\dfrac{|M_{\kappa}^N|^2}{N^2\lambda^N} \right)^3 
%
\end{eqnarray*}
 and, recalling \eqref{eq_propMkappa2}: 
 $$
 \left(\sum_{\kappa \in \Kappa^N} \| (\bar{w}_{\kappa} - w^s_{\kappa})\|^6_{L^6(T_{\kappa}^N)} \right)^{1/3} \lesssim  \left(\dfrac{|\lambda^N|^2}{N^2} +   \dfrac{|\lambda^N|^4}{N^{10/3}|d_{min}^N|^4}+   C_{\delta}|\lambda^N|^{4}   \right) 
 $$
This entails:
\begin{multline*}
\sum_{\kappa \in \Kappa^N} \sum_{i \in \mathcal Z_{\delta}^N} \| (\bar{w}_{\kappa} - w^s_{\kappa})\|^2_{L^2(B_{\infty}(h_i^N,2/N) \cap T_{\kappa}^N)} \\
\lesssim \dfrac{1}{\delta^{2/3}N^2}
\left(\dfrac{|\lambda^N|^2}{N^{4/3}} +   \dfrac{|\lambda^N|^4}{|N^{2/3}d_{min}^N|^4}+   C_{\delta}(N^{2/3} |\lambda^N|^4)   \right) 
\end{multline*}
Finally, recalling  \eqref{eq_ass_lambdaN} to control the second term in parenthesis and combining with \eqref{eq_E201} we obtain:
\begin{equation} \label{eq_E20}
E_{2,0} \lesssim  \dfrac{1}{N^2} \left( \dfrac{1}{\delta^{\frac 2{3}}}+  C_{\delta}|N^{1/6}\lambda^N|^4  \right) .
\end{equation}
By decomposing again $\nabla \bar{w}$ into $\nabla (\bar{w} - \bar{w}^s_{\kappa}) +  \nabla \bar{w}^s_{\kappa}$, applying Proposition \ref{prop_comparestks2} $ii)$ to estimate the first term and the expansion 
\eqref{eq_Stokesexpansion} with Proposition \ref{prop_comparestks2} $i)$ for the second one, we obtain that
$$
E_{2,1} :=  \sum_{i\in \mathcal Z_{\delta}^N} \|\nabla \bar{w}\|^2_{L^2(B_{\infty}(h_i^N,2a^N))}\,.
$$
satisfies the similar bound:
\begin{equation} \label{eq_E21}
E_{2,1} \lesssim  \left( \dfrac{1}{\delta} +  \dfrac{1}{|Nd_{min}^N|^{10/3}} + C_{\delta} |\lambda^N|^2  \right) \,.
\end{equation}
Eventually, gathering \eqref{eq_E2infini}, \eqref{eq_E20} and \eqref{eq_E21} in \eqref{eq_tobound} entails:
\begin{equation} \label{eq_boundE2}
E_{2} \lesssim  \left(  \dfrac{1}{\delta^{\frac 23}} +   \dfrac{1}{|Nd_{min}^N|^{{10}/3}} + C_{\delta} \left(|\lambda^N|^2 +|N^{1/6}\lambda^N|^4  \right)  \right)^{\frac 12} .
\end{equation}
We complete the proof by combining \eqref{eq_boundE1} and \eqref{eq_boundE2} in \eqref{eq_cpreskesa}.
\end{proof}

\section{Proof of Theorem \ref{thm_main} -- Asymptotics $N \to \infty$} \label{sec_pfpart2}
In this section, we end the proof of Theorem \ref{thm_main} keeping the notations
introduced in the previous section. Under assumption \eqref{eq_ass_dmin}-\eqref{eq_ass_lambdaN}, a straightforward corollary of Proposition \ref{prop_decomposition}  reads:
\begin{corollary}
For arbitrary $\delta \geq 4,$ there holds:
$$
\limsup_{N\to \infty} \left|\int_{\Omega} \nabla u^N : \nabla w - \sum_{\kappa \in  \Kappa^N }\int_{T_{\kappa}^N} \nabla u^N : \nabla w^s \right| \lesssim \dfrac{1}{{\delta}^{\frac 13}}
$$
\end{corollary}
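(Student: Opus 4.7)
The plan is to apply Proposition~\ref{prop_decomposition} and take the $\limsup$ as $N \to \infty$ with $\delta \geq 4$ held fixed, then verify that every one of the five terms inside the square-root bracket is either bounded by $C/\delta^{2/3}$ uniformly in $N$ or vanishes in the limit; the square root then produces the claimed $\delta^{-1/3}$.

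Two terms handle themselves cleanly against $\delta$: the term $1/\delta$ is $\leq \delta^{-2/3}$ for $\delta \geq 1$, and the term $N^{2/3}|\lambda^N|^5/(\delta^{2/3} d_{min}^N)$ is uniformly bounded by $C/\delta^{2/3}$ by the second clause of \eqref{eq_ass2}. Two other terms vanish as $N \to \infty$ for fixed $\delta$: $\delta|\lambda^N|^2 \to 0$ by \eqref{eq_consass2_2} (which gives $\lambda^N \to 0$), and $|\lambda^N|^3/d_{min}^N \to 0$ by the first clause of \eqref{eq_ass2}.

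The only subtle contribution is $\delta^{1/3} N^{2/3}|\lambda^N|^4$. For this I would use the chain of inequalities stated just before the corollary, namely $N^{2/3}|\lambda^N|^4 \lesssim d_{min}^N/|\lambda^N|$, together with the pigeonhole observation that $N$ disjoint balls $B_i^N \Subset \Omega$ force $d_{min}^N \lesssim N^{-1/3}$. Combining this with $\lambda^N \geq c N^{-1/3}$ from \eqref{eq_consass2_2} and the second clause of \eqref{eq_ass2}, one gets $|\lambda^N|^5 \lesssim d_{min}^N/N^{2/3} \lesssim N^{-1}$, hence $|\lambda^N| \lesssim N^{-1/5}$, and finally $N^{2/3}|\lambda^N|^4 \lesssim N^{-2/15}$. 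For fixed $\delta$ this term therefore also vanishes. Collecting the five contributions inside the parentheses, the $\limsup$ is $\leq C/\delta^{2/3}$, and taking the square root yields the stated bound.

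The main obstacle is precisely the last step: a direct application of the chain $N^{2/3}|\lambda^N|^4 \lesssim d_{min}^N/|\lambda^N|$ only proves that this quantity is bounded (since $d_{min}^N/|\lambda^N|$ is bounded via the pigeonhole and \eqref{eq_consass2_2}), which would give an unwanted factor $\delta^{1/6}$ after the square root. Extracting the polynomial decay $N^{-2/15}$ really requires combining the pigeonhole bound $d_{min}^N \lesssim N^{-1/3}$ with the second half of \eqref{eq_ass2} to pin down the size of $|\lambda^N|$; this is the only place where the three-dimensional packing constraint on the holes enters explicitly.
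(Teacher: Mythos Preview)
Your proof is correct and follows the paper's approach: apply Proposition~\ref{prop_decomposition} and take the $\limsup$ term by term. The paper justifies the corollary with the single displayed chain $N^{2/3}|\lambda^N|^4 \lesssim d_{min}^N/|\lambda^N| \lesssim |\lambda^N|^2 N d_{min}^N$ and the word ``straightforward''; as you rightly observe, that chain by itself only yields boundedness of $N^{2/3}|\lambda^N|^4$ and hence a spurious $\delta^{1/6}$. Your extra ingredient --- the elementary packing bound $d_{min}^N \lesssim N^{-1/3}$, which combined with the second clause of \eqref{eq_ass2} forces $|\lambda^N|\lesssim N^{-1/5}$ and thus $N^{2/3}|\lambda^N|^4\lesssim N^{-2/15}\to 0$ --- is exactly what is needed to make the ``straightforward'' deduction actually go through, and is a detail the paper leaves implicit.
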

So in this section, we prove the following proposition:
\begin{proposition} \label{prop_ItildeN}
For arbitrary  $\delta \geq 4,$ there holds:
$$
\limsup_{N\to \infty} \left|\sum_{\kappa \in \mathcal K^N }\int_{T_{\kappa}^N} \nabla u^N : \nabla w^s - 6\pi a \int_{\Omega} (j - \rho \bar{u}) \cdot w \right| \lesssim \dfrac{1}{\sqrt{\delta}}  \,.
$$ 
\end{proposition}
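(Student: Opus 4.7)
The approach is to integrate by parts on each cube $T^N_\kappa$, exploiting that $w^s_\kappa = \sum_{i \in \mathcal I^N_\kappa \setminus \mathcal Z^N_\delta} U^N[w(h_i^N)](\cdot - h_i^N)$, together with the associated combination of pressures $p^s_\kappa$, is a classical Stokes solution on $T^N_\kappa$ outside the active balls $B_i^N$. Since $u^N$ is divergence-free and constant on each $B_i^N$, a standard integration by parts together with the force formula \eqref{eq_stokesformula} gives, for each $\kappa$,
\[
\int_{T^N_\kappa}\nabla u^N:\nabla w^s \;=\; \frac{6\pi}{N}\!\!\sum_{i\in\mathcal I^N_\kappa\setminus\mathcal Z^N_\delta}\!\!w(h_i^N)\cdot v_i^N \;+\; \int_{\partial T^N_\kappa} u^N\cdot(\partial_n w^s_\kappa - p^s_\kappa n)\,{\rm d}\sigma.
\]
Stress conservation for each stokeslet gives $\int_{\partial T^N_\kappa}(\partial_n w^s_\kappa-p^s_\kappa n)\,{\rm d}\sigma = -\tfrac{6\pi}{N}\sum_{i\in\mathcal I^N_\kappa\setminus\mathcal Z^N_\delta}w(h_i^N)$, so formally replacing $u^N$ by a suitable constant $\bar u^N_\kappa$ in the $\partial T^N_\kappa$-integral produces the desired Brinkman-type term $-\bar u^N_\kappa\cdot\tfrac{6\pi}{N}\sum_i w(h_i^N)$. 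The whole difficulty lies in defining $\bar u^N_\kappa$ so that the error of this replacement is controllable.

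I would take $\bar u^N_\kappa$ to be the mean of $u^N$ on the annular shell $A^{N,\delta}_\kappa:=T^N_\kappa\setminus\tilde T^N_\kappa$, where $\tilde T^N_\kappa$ is the cube concentric to $T^N_\kappa$ of width $\lambda^N(1-1/\delta)$. By definition of $\mathcal Z^N_\delta$, every active center $h_i^N$ lies at distance at least $\lambda^N/\delta$ from $\partial T^N_\kappa$, hence at least $\lambda^N/(2\delta)$ from this shell, so $(w^s_\kappa,p^s_\kappa)$ is a classical Stokes solution on $A^{N,\delta}_\kappa$. Moreover all $A^{N,\delta}_\kappa$ are obtained by translation and dilation by $\lambda^N$ from a single (parameter-dependent) model shell, and since the long dimensions of this model shell remain of order one, its first Neumann eigenvalue is bounded away from zero uniformly in $\delta$. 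This yields the Poincar\'e--Wirtinger bound $\|u^N-\bar u^N_\kappa\|_{L^2(A^{N,\delta}_\kappa)}\lesssim\lambda^N\|\nabla u^N\|_{L^2(A^{N,\delta}_\kappa)}$ with constant independent of $N,\kappa,\delta$. Introducing a cutoff $\phi_\kappa$ supported in $A^{N,\delta}_\kappa$ with $\phi_\kappa\equiv 1$ on $\partial T^N_\kappa$ and $|\nabla\phi_\kappa|\lesssim\delta/\lambda^N$, integrating by parts on $A^{N,\delta}_\kappa$, the error $E_\kappa=\int_{\partial T^N_\kappa}(u^N-\bar u^N_\kappa)\cdot(\partial_n w^s_\kappa-p^s_\kappa n)\,{\rm d}\sigma$ is converted into a volume integral bounded by $\delta\,\|\nabla u^N\|_{L^2(A^{N,\delta}_\kappa)}\cdot(\|\nabla w^s_\kappa\|_{L^2(A^{N,\delta}_\kappa)}+\|p^s_\kappa\|_{L^2(A^{N,\delta}_\kappa)})$. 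The stokeslet decay \eqref{eq_Stokesexpansion} combined with the distance lower bound controls the stokeslet norms by $\lesssim M^\infty|\lambda^N|^{5/2}\sqrt\delta\,\|w\|_\infty$, and a Cauchy--Schwarz summation over $\kappa$ (using $|\Kappa^N|\lesssim|\Omega|/|\lambda^N|^3$ and $\|\nabla u^N\|_{L^2(\Omega)}\le\mathcal E^\infty$) yields $\sum_\kappa|E_\kappa|=O(\delta^{3/2}\lambda^N)$, which vanishes as $N\to\infty$ for any fixed $\delta$.

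To pass to the limit in the main terms, the Stokes-force sum $\tfrac{6\pi}{N}\sum_i w(h_i^N)\cdot v_i^N$ converges to $6\pi\int_\Omega j\cdot w$ by \eqref{eq_ass5}, and the contribution of the removed indices is $O(1/\sqrt\delta)$ by Cauchy--Schwarz together with \eqref{eq_couloir}. For the Brinkman-type term, define
\[
\sigma^N_\delta(x) := \frac{6\pi}{N|\lambda^N|^3}\sum_{\kappa\in\Kappa^N}\Bigl[\sum_{i\in\mathcal I^N_\kappa\setminus\mathcal Z^N_\delta}w(h_i^N)\Bigr]\mathbf{1}_{T^N_\kappa}(x),
\]
which is bounded in $L^\infty(\Omega)$ uniformly in $N,\delta$ by \eqref{eq_ass3} and converges weakly-$\ast$ to $6\pi\rho w$ up to an $O(1/\sqrt\delta)$ correction coming from the deletion. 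The sum $\sum_\kappa\bar u^N_\kappa\cdot\tfrac{6\pi}{N}\sum_{i\in\mathcal I^N_\kappa\setminus\mathcal Z^N_\delta}w(h_i^N)$ is precisely $\int_\Omega\sigma^N_\delta\cdot\bar u^N_{pc}\,{\rm d}x$ with $\bar u^N_{pc}=\sum_\kappa\bar u^N_\kappa\mathbf{1}_{T^N_\kappa}$, and combining the weak-$\ast$ convergence of $\sigma^N_\delta$ with the strong compactness of $u^N$ in $L^q(\Omega)$ ($q<6$), plus a Poincar\'e-type estimate $\|\bar u^N_{pc}-u^N\|_{L^1(\Omega)}\to 0$ obtained by comparing the shell-average with the full cube-average, delivers the limit $6\pi\int_\Omega\rho w\cdot\bar u$.

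The principal obstacle is the error analysis in the second paragraph: one must establish that the Poincar\'e--Wirtinger constant on the shell $A^{N,\delta}_\kappa$ does not degenerate as $\delta\to\infty$, and one must book-keep precisely the competing $\delta$-dependencies between the cutoff gradient, the stokeslet norms on $A^{N,\delta}_\kappa$ and the volume of the shell so as to keep the final bound of order $1/\sqrt\delta$ in the limit $N\to\infty$, with the stated powers of $\mathcal E^\infty$ and $M^\infty$ arising from the Cauchy--Schwarz/H\"older chain used when summing over $\kappa$ and pairing $\sigma^N_\delta$ with $\bar u^N_{pc}$.
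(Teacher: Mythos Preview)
Your strategy is essentially the paper's: integrate by parts on each cell, use Stokes' law for the interior contributions and stress conservation for the boundary term, replace $u^N$ by a shell-average $\bar u^N_\kappa$, and pass to the limit via weak compactness of a density $\sigma^N$ paired with strong compactness of $u^N$. The result is correct.

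Two minor points of comparison are worth recording. First, your claim that the Poincar\'e--Wirtinger constant on the thin shell $A^{N,\delta}_\kappa$ is bounded uniformly in $\delta$ is true but unnecessary: the paper simply allows this constant to depend on $\delta$ (writing $C_\delta$ in Lemma~\ref{lem_errkappa}), because the total error $\sum_\kappa E_\kappa$ still carries a factor $\lambda^N$ and vanishes as $N\to\infty$ for any fixed $\delta$; the surviving $1/\sqrt\delta$ in the final estimate comes entirely from the deletion of the $\mathcal Z^N_\delta$ indices in the two main limit terms, not from this step. Second, for the Brinkman term you put $\sigma^N_\delta$ on the full cubes and pair it with the piecewise-constant $\bar u^N_{pc}$, which then forces you to compare shell-averages with cube-averages. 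The paper takes a slightly shorter route: it places $\sigma^N$ on the shells $[T^N_\kappa]_{2\delta}$ themselves (with the appropriate normalization $(1-(1-1/\delta)^3)^{-1}$), so that the sum is \emph{exactly} $\int_\Omega\sigma^N\cdot u^N$ and one may invoke weak convergence of $\sigma^N$ against strong convergence of $u^N$ directly, with no intermediate $\bar u^N_{pc}$. The deletion error is then isolated as a separate $\tilde\sigma^N$, controlled in $L^{4/3}$ by interpolation between its $L^1$ bound (of order $1/\delta$) and its $L^\infty$ bound (of order $\delta$), and paired with $u^N\in L^4$; this is where the powers $|M^\infty|^{1/4}$ and $(1+|\mathcal E^\infty|)^{5/2}$ in the statement originate.
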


This will end the proof of {Theorem \ref{thm_main}}. Indeed, combining the above corollary and proposition, we obtain that 
there exists $K$ which does not depend on $\delta$ such that, for arbitrary  $\delta \geq 4$:
$$
\limsup_{N\to \infty} \left|\int_{\Omega} \nabla u^N : \nabla w -6\pi a \int_{\Omega} (j - \rho \bar{u}) \cdot w \right| \leq \dfrac{K}{\sqrt{\delta}}\,.
$$ 
As 
$$
\lim_{N \to \infty} \int_{\Omega} \nabla u^N : \nabla w = \int_{\Omega} \nabla \bar{u} : \nabla w\,,
$$
and $\delta$ can be made arbitrary large, this entails that 
$$
\int_{\Omega} \nabla \bar{u} : \nabla w = 6\pi a \int_{\Omega} (j - \rho \bar{u}) \cdot w\,,
$$
and we obtain that $\bar{u}$ satisfies (B3).

\medskip

We give now a proof of Proposition \ref{prop_ItildeN}. For $\delta \geq 4$ and $N$ sufficiently large, we denote:
$$
\tilde{I}^N = \sum_{\kappa \in \Kappa^N} \int_{T_{\kappa}^N} \nabla u^N : \nabla w^s =  \sum_{\kappa \in \Kappa^N} \int_{T_{\kappa}^N} \nabla u^N : \nabla w^s_{\kappa}.
$$
Let fix $\kappa \in \Kappa^N$ and denote
 $$
 \tilde{I}^N_{\kappa} := \int_{T_{\kappa}^N} \nabla u^N : \nabla w^{s}_{\kappa}\,.
 $$
At first, we give a simpler expression for this integral. By definition, we have that:
$$
w^{s}_{\kappa} (x) = \sum_{i \in \mathcal I_{\kappa}^N \setminus \mathcal Z_{\delta}^N} U^{a^N}[w(h_i^N)](x-h_i^N)\,, \quad \forall \, x \in \mathbb R^3\,,
$$
so that, introducing the associated pressures $ x \mapsto P^{a^N}[w(h_i^N)](x-h_i^N),$ we obtain

\begin{eqnarray*}
\tilde{I}_{\kappa}^N &=& \int_{T_{\kappa}^N} \nabla u^N : \nabla w^{s}_{\kappa}\,, \\
							&=& \sum_{i \in \mathcal I^{N}_{\kappa} \setminus \mathcal Z_{\delta}^N} \int_{T_{\kappa}^N \setminus B_i^N} \nabla u^N(x) : [ \nabla  U^{a^N}[w(h_i^N)](x-h_i^N) - P^{a^N}[w(h_i^N)](x-h_i^N)\mathbb I_3] {\rm d}x\,, 
\end{eqnarray*}
Since $u^N$ is divergence-free, we integrate by parts. This yields:
\begin{eqnarray*}
	\tilde{I}_{\kappa}^N						&=&\sum_{i \in \mathcal I^{N}_{\kappa} \setminus \mathcal Z_{\delta}^N}   \Bigg(  \int_{\partial T_{\kappa}^N} \big\{ \partial_n  U^{a^N}[w(h_i^N)](x-h_i^N) - P^{a^N}[w(h_i^N)](x-h_i^N)n\big\} \cdot u^N(x) {\rm d}\sigma  \\
						&& \; -							\int_{\partial B_i^N} \big\{ \partial_n  U^{a^N}[w(h_i^N)](x-h_i^N) - P^{a^N}[w(h_i^N)](x-h_i^N)n\big\} \cdot v_i^N {\rm d}\sigma \Bigg) \,, \\
						&=& 		\sum_{i \in \mathcal I^{N}_{\kappa} \setminus \mathcal Z_{\delta}^N}  	I_{i,ext}^N - I_{i,int}^N	\,,			
\end{eqnarray*}
where we denoted:
\begin{eqnarray*}
 I_{i,int}^N &=& \int_{\partial B_i^N} \big\{ \partial_n  U^{a^N}[w(h_i^N)](x-h_i^N) - P^{a^N}[w(h_i^N)](x-h_i^N)n\big\} \cdot v_i^N {\rm d}\sigma\,,  \\
I_{i,ext}^N	&:=&  \int_{\partial T_{\kappa}^N} \big\{ \partial_n  U^{a^N}[w(h_i^N)](x-h_i^N) - P^{a^N}[w(h_i^N)](x-h_i^N)n\big\} \cdot u^N(x) {\rm d}\sigma \,.
\end{eqnarray*} 
Recalling that $(U^{a^N},P^{a^N})$ is the solution to the Stokes problem in the exterior of a ball of radius $a^N$, and that $v_i^N$
is constant on $\partial B_i^N,$ we have an explicit value for the interior integral whatever the value of the index $i$ (see \eqref{eq_stokesformula}):
$$
I_{i,int}^N = -{6\pi}a^N w(h_i^N) \cdot v_i^N\,.
$$
For the other term, we apply that the diameter of $T^{N}_{\kappa}$ is small so that we may approximate $u^N$ on $\partial T^{N}_{\kappa}$ by a constant. Namely, we choose:
$$
\bar{u}^N_{\kappa} = \dfrac{1}{|[T^{N}_{\kappa}]_{2\delta}|}\int_{[T^{N}_{\kappa}]_{2\delta}} u^N(x){\rm d}x,
$$
where $[T^{N}_{\kappa}]_{2\delta}$ is the $\lambda^N/(2\delta)$-neighborhood of $\partial T_{\kappa}^N$
inside $\mathring{T}_{\kappa}^N.$ At this point, we remark that we have actually two notations for the same quantity. Indeed, a simple draw shows that introducing
$x_{\kappa}^N$ the center of $T_{\kappa}^N,$ we have:
$$
\mathring{T}^{N}_{\kappa} = B_{\infty}\left(x_{\kappa}^N,\frac{\lambda^N}{2}\right) \quad \text{ while } \quad [T^{N}_{\kappa}]_{2\delta} = A\left(x_{\kappa}^N,\left[1-\frac 1{\delta}\right]\frac{\lambda^N}{2},\frac{\lambda^N}{2}\right)\,.
$$
So, we replace:
\begin{eqnarray*}
I^N_{i,ext} &=& \int_{\partial T_{\kappa}^N} \big\{ \partial_n  U^{a^N}[w(h_i^N)](x-h_i^N) - P^{a^N}[w(h_i^N)](x-h_i^N)n\big\} \cdot \bar{u}^N_{\kappa} {\rm d}\sigma   \Bigg)\\
			&& \; + \int_{\partial T_{\kappa}^N} \big\{ \partial_n  U^{a^N}[w(h_i^N)](x-h_i^N) - P^{a^N}[w(h_i^N)](x-h_i^N)n\big\} \cdot (u^N(x) - \bar{u}^N_{\kappa}) {\rm d}\sigma\,.
\end{eqnarray*}

For the first term on the right-hand side of this last identity, we apply that the flux through hypersurfaces of the normal stress tensor is conserved by solutions to the Stokes problem so that, applying \eqref{eq_stokesformula}, we have:
\begin{multline*}
\int_{\partial T_{\kappa}^N} \big\{ \partial_n  U^{a^N}[w(h_i^N)](x-h_i^N) - P^{a^N}[w(h_i^N)](x-h_i^N)n\big\}{\rm d}\sigma   \Bigg) \\
\begin{array}{rcl}
&=&  \displaystyle \int_{\partial B_i^N} \big\{ \partial_n  U^{a^N}[w(h_i^N)](x-h_i^N) - P^{a^N}[w(h_i^N)](x-h_i^N)n\big\}  {\rm d}\sigma \\[12pt]
&=& - \displaystyle {6\pi} a^N w(h_i^N).
\end{array}
\end{multline*}
Finally, we obtain:
\begin{equation} \label{eq_INkappa}
\tilde{I}^N_{\kappa} = 6\pi Na^N \left[ \dfrac{1}{N} \sum_{i \in \mathcal I^N_{\kappa} \setminus \mathcal Z^N_{\delta}} ( w(h_i^N) \cdot v_i^N - w(h_i^N) \cdot \bar{u}_{\kappa}^N )\right]+ Err_{\kappa}
\end{equation}
with: 
$$
Err_{\kappa} = \int_{\partial T_{\kappa}^N}  \left\{ \sum_{i \in \mathcal I^{N}_{\kappa} \setminus \mathcal Z_{\delta}^N} 
 \partial_n  U^{a^N}[w(h_i^N)](\cdot-h_i^N) - P^{a^N}[w(h_i^N)](\cdot-h_i^N)n\right\} \cdot (u^N - \bar{u}^N_{\kappa}) {\rm d}\sigma\,.
$$
We control this error term with the following lemma:
\begin{lemma} \label{lem_errkappa}
There exists a constant $C_{\delta}$ depending only on $\delta$ such that,
$$
|Err_{\kappa}| \lesssim {C_{\delta}}|\lambda^N|^{\frac 52} \|\nabla u^N\|_{L^2(T_{\kappa}^N)} \,, \quad \forall \, \kappa \in \Kappa^N\,.
$$
\end{lemma}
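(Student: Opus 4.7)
The plan is to bound $Err_\kappa$ by Cauchy--Schwarz on $\partial T^N_\kappa$, estimating separately the normal stress of the combined stokeslets and the deviation $u^N - \bar u^N_\kappa$. Write
$$
w^s_\kappa(x) = \sum_{i \in \mathcal I^N_\kappa \setminus \mathcal Z^N_\delta} U^N[w(h_i^N)](x-h_i^N), \qquad p^s_\kappa(x) = \sum_{i \in \mathcal I^N_\kappa \setminus \mathcal Z^N_\delta} P^N[w(h_i^N)](x-h_i^N).
$$
The crucial geometric observation is that, because we removed the indices in $\mathcal Z^N_\delta$, every $h_i^N$ entering the sum lies at distance at least $\lambda^N/\delta$ from $\partial T^N_\kappa$, and hence at distance at least $\lambda^N/(2\delta)$ from the annulus $A_\kappa := [T^N_\kappa]_{2\delta}$. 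Thus $(w^s_\kappa, p^s_\kappa)$ is a smooth classical Stokes pair on a neighborhood of $\partial T^N_\kappa$ that contains $A_\kappa$, and no hole interferes with the analysis near $\partial T^N_\kappa$.

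For the stress factor, the pointwise decay \eqref{eq_Stokesexpansion} applied at each $x \in \partial T^N_\kappa$ gives
$$
|\nabla U^N[w(h_i^N)](x-h_i^N)| + |P^N[w(h_i^N)](x-h_i^N)| \lesssim \frac{\delta^2 \|w\|_{L^\infty}}{N|\lambda^N|^2}.
$$
Summing the at most $M^N_\kappa$ contributions, invoking $M^N_\kappa \leq M^\infty N |\lambda^N|^3$ from \eqref{eq_propMkappa}, and integrating over $\partial T^N_\kappa$ (whose area is of order $|\lambda^N|^2$) yields
$$
\|\partial_n w^s_\kappa - p^s_\kappa\, n\|_{L^2(\partial T^N_\kappa)} \lesssim \delta^2 M^\infty \|w\|_{L^\infty} |\lambda^N|^2.
$$
For the second factor, I use a trace inequality across the thin direction of $A_\kappa$ (thickness $\lambda^N/(2\delta)$), namely
$$
\|v\|_{L^2(\partial T^N_\kappa)}^2 \lesssim \frac{\delta}{\lambda^N}\,\|v\|_{L^2(A_\kappa)}^2 + \frac{\lambda^N}{\delta}\,\|\nabla v\|_{L^2(A_\kappa)}^2,
$$
combined with a Poincaré--Wirtinger inequality on $A_\kappa$. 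Rescaling $A_\kappa$ onto the fixed model annulus $A(0,(1-1/\delta)/2,1/2)$ shows that the Poincaré--Wirtinger constant for mean-zero functions on $A_\kappa$ is bounded by $C(\delta)\lambda^N$. Applied to $v = u^N - \bar u^N_\kappa$, which belongs to $H^1(A_\kappa)$ because $u^N$ has been extended by the constants $v_i^N$ on the holes (preserving the $H^1$ regularity and the divergence-free condition), these inequalities combine to give
$$
\|u^N - \bar u^N_\kappa\|_{L^2(\partial T^N_\kappa)} \leq C_\delta \sqrt{\lambda^N}\,\|\nabla u^N\|_{L^2(T^N_\kappa)}.
$$

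Multiplying this estimate by the stress bound via Cauchy--Schwarz produces the target inequality, with all $\delta$-dependent prefactors absorbed into $C_\delta$. The only genuine obstacle is justifying the correct $\lambda^N$-scaling of both inequalities on the thin annular domain $A_\kappa$; since the lemma permits a $\delta$-dependent constant, normalizing $A_\kappa$ to a reference geometry that depends on $\delta$ but not on $N$ reduces both estimates to standard functional inequalities on that fixed geometry. This leaves a clean $|\lambda^N|^{5/2}$ factor and delivers the stated bound.
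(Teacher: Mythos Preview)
Your proof is correct and arrives at the same $|\lambda^N|^{5/2}$ scaling, but by a genuinely different route than the paper. The paper does \emph{not} apply Cauchy--Schwarz on the surface $\partial T^N_\kappa$. Instead, it exploits that $(w^s_\kappa,p^s_\kappa)$ solves the Stokes system on the whole annulus $[T^N_\kappa]_{2\delta}$: it constructs (via a cutoff and a Bogovskii correction) a divergence-free lifting $v\in H^1([T^N_\kappa]_{2\delta})$ with $v=u^N-\bar u^N_\kappa$ on $\partial T^N_\kappa$ and $v=0$ on the inner boundary, and then integrates by parts to rewrite $Err_\kappa$ as the \emph{volume} integral $\int_{[T^N_\kappa]_{2\delta}} \nabla w^s_\kappa : \nabla v$. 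Cauchy--Schwarz is applied there, and the two factors are bounded by $\|\nabla w^s_\kappa\|_{L^2([T^N_\kappa]_{2\delta})}\lesssim \sqrt{\delta}\,M^\infty\|w\|_{L^\infty}|\lambda^N|^{5/2}$ (from the stokeslet decay integrated over the annulus) and $\|\nabla v\|_{L^2}\le C_\delta\|\nabla u^N\|_{L^2(T^N_\kappa)}$ (from the Bogovskii/Poincar\'e--Wirtinger estimates). Your approach is more elementary in that it avoids the Bogovskii lifting entirely and does not actually use the Stokes structure of $(w^s_\kappa,p^s_\kappa)$ on the annulus---pointwise decay plus a scaled trace inequality suffice. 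The paper's approach, by contrast, packages everything into $H^1$ volume norms and never touches trace spaces, which keeps all the functional-analytic machinery (Bogovskii, Poincar\'e--Wirtinger) uniform with the rest of the argument.
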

\begin{proof}
For large $N$, we have that 
$$
[T^{N}_{\kappa}]_{2\delta} \subset T_{\kappa}^N \setminus  \bigcup_{i \in \mathcal I_{\kappa}^N \setminus \mathcal Z_{\delta}^N} \overline{B_{i}^N}.  
$$ 
Indeed, $B_{i}^N = B(h_{i}^N,a^N)$ and, for $i \in \mathcal I_{\kappa}^N \setminus \mathcal Z_{\delta}^N$ we have
that $h_{i}^N$ is $\lambda^N/\delta $ far from $\partial T_{\kappa}^N.$ These centers are thus $\lambda^N/(2\delta)$ 
far from $[T_{N}^{\kappa}]_{2\delta}$ which is larger than $a^N$ for large $N$. In particular all the stokeslets
in $w^s_{\kappa}$ satisfy: 
\begin{equation} \label{eq_remarkstokes}
\left\{ \begin{array}{rcl}
-\Delta U^{a^N}[w(h_i^N)](x-h_i^N) + \nabla P^{a^N}[w(h_i^N)](x-h_i^N) &=& 0\,, \\[8pt]
{\rm div}\, U^{a^N}[w(h_i^N)](x-h_i^N) &=& 0\,,
\end{array}
\right.
\quad \text{ on $[T_{N}^{\kappa}]_{2\delta}\,.$} 
\end{equation}
Consequently, we split 
$$
\partial [T^{N}_{\kappa}]_{2\delta} = \partial T_{\kappa}^N \cup \partial T^N_{\kappa,\delta}
$$
where 
$$
\partial T^{N}_{\kappa,\delta} = \{x \in T_{\kappa}^N  \text{ s.t. } \text{dist} (x, \partial T_{\kappa}^N) = \lambda^N/(2\delta)\}\,.
$$
We remark then that for any divergence-free $v \in H^1([T_{\kappa}^N]_{2\delta})$ satisfying 
$$
\left\{
\begin{array}{rcll}
v &=& u^N - \bar{u}^N_{\kappa} \,, & \text{ on $\partial T_{\kappa}^N$}\,,\\[8pt]
v &=& 0  \,, & \text{ on $\partial T^N_{\kappa,\delta}$}\,, 
\end{array}
\right.
$$
integrating by parts $Err_{\kappa}$ and applying \eqref{eq_remarkstokes},  we have:
$$
Err_{\kappa} = \int_{[T_{\kappa}^N]_{2\delta}}  \left\{ \sum_{i \in \mathcal I^{N}_{\kappa} \setminus \mathcal Z_{\delta}^N} 
 \nabla  U^N[w(h_i^N)](\cdot-h_i^N) \right\} : \nabla v \,,
$$
so that:
\begin{equation} \label{eq_preskekappa}
|Err_{\kappa}| \leq  \left\{\sum_{i \in \mathcal I^{N}_{\kappa} \setminus \mathcal Z_{\delta}^N} 
 \|\nabla  U^N[w(h_i^N)](\cdot-h_i^N)\|_{L^2([T_{\kappa}^N]_{2\delta})} \right\}  \|\nabla v\|_{L^2([T_{\kappa}^N]_{2\delta})}\,.
\end{equation}
Let choose a suitable $v$ in order to apply this estimate. We recall that we introduced $x^N_{\kappa}$ the center
of $T^N_{\kappa}$ and that we remarked  that
$$
T^{N}_{\kappa} = B_{\infty}\left(x^N_{\kappa},\dfrac {\lambda^N}{2} \right) \,, \quad [T^N_{\kappa}]_{2\delta} = A \left( x^N_{\kappa},\left[1-\dfrac{1}{\delta} \right]\frac{\lambda^N}{2},\frac {\lambda^N}{2}\right)\,. 
$$
We set
\begin{multline*}
v(x) = \zeta_{\delta}((x-x^N_{\kappa})/\lambda^N) (u^N(x)-\bar{u}_{\kappa}^N) \\
 - \mathfrak B_{x^N_{\kappa},(1-1/\delta)\lambda^N/2,\lambda^N/2}[ x \mapsto (u^N(x)-\bar{u}_{\kappa}^N) \cdot \nabla [\zeta_{\delta}((x-x^N_{\kappa})/\lambda^N)]]\,.
\end{multline*}
Again $v$ is well-defined as one shows by direct computations that the argument of the Bogovskii operator has mean zero on 
$ A(x^N_{\kappa},(1-1/\delta)\lambda^N/2,\lambda^N/2)).$ Applying Lemma \ref{lem_div}, we have then that there exists a constant
$C_{\delta}$ depending only on $\delta$ for which:
$$
\|\nabla v\|_{L^2([T^N_{\kappa}]_{2\delta})} \leq C_{\delta} \left[  \|\nabla u^N\|_{L^2([T^N_{\kappa}]_{2\delta})} + \frac 1{\lambda^N}\|u^N(x)-\bar{u}_{\kappa}^N\|_{L^2([T^N_{\kappa}]_{2\delta})} \right].
$$
Here we note that the $\bar{u}_{\kappa}^N$ is exactly the average of $u^N$ on $[T^N_{\kappa}]_{2\delta}.$ Consequently, applying the Poincar\'e-Wirtinger inequality in the annulus $[T^N_{\kappa}]_{2\delta}$ with the remark on the best constant as in Lemma \ref{lem_PW} we obtain finally that:
\begin{equation} \label{eq_controlliftukappa}
\|\nabla v\|_{L^2([T^N_{\kappa}]_{2\delta})} \leq C_{\delta}  \|\nabla u^N\|_{L^2([T^N_{\kappa}]_{2\delta})}\,.
\end{equation}
As for the stokeslet, we remark again that for any $i \in \mathcal I_{\kappa}^N \setminus \mathcal Z_{\delta}^N$ the minimum distance between  $h_{i}^N$ and $[T_{\kappa}^N]_{2\delta}$ is larger than $\lambda^N/(2\delta).$ Hence, applying the expansion \eqref{eq_Stokesexpansion} of the stokeslet $U^{a^N}[w(h_i^N)]$ we obtain that
\begin{eqnarray*}
\|\nabla U^{a^N}[w(h_i^N)](\cdot -h_{i}^N)\|_{L^2([T_{\kappa}^N]_{2\delta})} &\leq&  \left(\int_{\lambda^N/(2\delta)}^{\infty} \dfrac{|a^N|^2{\rm d}r}{r^2} \right)^{\frac 12}|w(h_i^N)| \\
&\lesssim & \dfrac{\sqrt{2\delta}}{N\sqrt{\lambda^N}}\,.
\end{eqnarray*} 
Combining these computations for the (at most) $M_{\kappa}^N$ indices $i \in \mathcal I^{N}_{\kappa} \setminus \mathcal Z_{\delta}^N$   entails by \eqref{eq_propMkappa} that:
\begin{equation} \label{eq_controlstokeslet}
\sum_{i \in \mathcal I^{N}_{\kappa} \setminus \mathcal Z_{\delta}^N} 
 \|\nabla  U^N[w(h_i^N)](\cdot-h_i^N)\|_{L^2([T_{\kappa}^N]_{2\delta})} \lesssim \sqrt{2\delta}|\lambda^N|^{\frac 52}\,.
\end{equation}
Combining \eqref{eq_controlliftukappa} and \eqref{eq_controlstokeslet} in \eqref{eq_preskekappa} yields the expected result.  
\end{proof}

\medskip

Summing \eqref{eq_INkappa} over $\kappa,$ we obtain that:
\begin{eqnarray}
\tilde{I}^N &=&  6\pi Na^N \left[ \dfrac{1}{N} \sum_{\kappa \in \Kappa^N} \sum_{i \in \mathcal I^N_{\kappa} \setminus \mathcal Z^N_{\delta}} ( w(h_i^N) \cdot v_i^N - w(h_i^N) \cdot \bar{u}_{\kappa}^N) \right] + Err \notag \\
			&=& 6\pi Na^N \left[	\dfrac{1}{N} \sum_{i \in \mathcal I^N \setminus \mathcal Z^N_{\delta}} w(h_i^N) \cdot v_i^N 
- \dfrac{1}{N} \sum_{\kappa \in \Kappa^N} \sum_{i \in \mathcal I^N_{\kappa} \setminus \mathcal Z^N_{\delta}} w(h_i^N) \cdot \bar{u}_{\kappa}^N \right] + Err \label{eq_INcomplet}
\end{eqnarray}
where 
$$
Err  = \sum_{\kappa \in \Kappa^N} Err_{\kappa}.
$$
Hence, applying Lemma \ref{lem_errkappa},  a Cauchy-Schwarz inequality and remarking again that the $(T^{N}_{\kappa})_{\kappa \in \Kappa^N}$  form a partition of a subset of $\Omega$ with a number of elements satisfying \eqref{eq_propKappaN}, we have: 
\begin{eqnarray}
\notag
|Err| &\lesssim &C_{\delta} \sum_{\kappa \in \Kappa^N} {\|\nabla u^N\|_{L^2(T^N_{\kappa})}} |\lambda^N|^{\frac 52}  
		 \lesssim  {C_{\delta}} \lambda^N \|\nabla u^N\|_{L^2(\Omega)} \\
		&\lesssim&   {C_{\delta}} \lambda^N  \,.\label{eq_err}
\end{eqnarray}
As $\lambda^N \to 0,$ the asymptotics of $\tilde{I}^N$ is given by the two first terms on the right-hand side of \eqref{eq_INcomplet}.
We know by assumption that $Na^N \to a.$ So, we make precise now the asymptotics of the two remaining terms in the two following lemmas:
\begin{lemma}
Given $\delta \geq 4,$ there holds:
$$
\limsup_{N\to \infty} \left|\dfrac{1}{N} \sum_{i \in \mathcal I^N \setminus \mathcal Z^N_{\delta}} w(h_i^N) \cdot v_i^N  - \int_{\Omega} j(x) \cdot w(x) {\rm d}x \right| \lesssim \dfrac{1}{\delta}\,. 
$$
\end{lemma}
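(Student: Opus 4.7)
The plan is to split the sum into a principal part, which converges to the target integral by the empiric measure assumption \eqref{eq_ass5}, and a small remainder supported on the ``bad'' indices $\mathcal Z^N_\delta$, which is controlled by the covering inequality \eqref{eq_couloir}.

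First I would observe that, by construction of $\Kappa^N$, if $i \notin \mathcal I^N$ then $h_i^N$ lies in some cube $T_\kappa^N$ with $\kappa \notin \Kappa^N$, which does not meet $\text{Supp}(w)$; hence $w(h_i^N) = 0$ for every such $i$. Consequently
\begin{equation*}
\frac{6\pi}{N}\sum_{i \in \mathcal I^N \setminus \mathcal Z^N_\delta} w(h_i^N)\cdot v_i^N
= \frac{6\pi}{N}\sum_{i=1}^N w(h_i^N)\cdot v_i^N \; - \; \frac{6\pi}{N}\sum_{i \in \mathcal Z^N_\delta} w(h_i^N)\cdot v_i^N,
\end{equation*}
where in the second sum I have used again that $w(h_i^N) = 0$ whenever $i \in \mathcal Z^N_\delta \setminus \mathcal I^N$.

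Next, I would treat the two terms separately. For the principal term, note that by definition of the empiric measure $S_N$,
\begin{equation*}
\frac{6\pi}{N}\sum_{i=1}^N w(h_i^N)\cdot v_i^N = 6\pi \int_{\mathbb R^3\times\mathbb R^3} w(x)\cdot v \, S_N({\rm d}x,{\rm d}v) = 6\pi \int_{\mathbb R^3} w(x)\cdot \Bigl(\int_{\mathbb R^3} v\, S_N({\rm d}v)\Bigr)(x).
\end{equation*}
Since $w \in C^\infty_c(\Omega)$ is a valid test-function and assumption \eqref{eq_ass5} states that the vector measure $\int_{\mathbb R^3} v\, S_N({\rm d}v)$ converges weakly to $j(x){\rm d}x$ on $\mathbb R^3$, the right-hand side converges to $6\pi\int_\Omega j(x)\cdot w(x){\rm d}x$ as $N\to\infty$. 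This handles the main term with no leftover error.

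For the remainder, I would apply the trivial bound $|w(h_i^N)\cdot v_i^N| \leq \|w\|_{L^\infty}|v_i^N| \leq \|w\|_{L^\infty}(1+|v_i^N|^2)$ and then use directly the fundamental property \eqref{eq_couloir} of the covering:
\begin{equation*}
\Bigl|\frac{6\pi}{N}\sum_{i \in \mathcal Z^N_\delta} w(h_i^N)\cdot v_i^N\Bigr| \leq 6\pi\|w\|_{L^\infty} \cdot \frac{1}{N}\sum_{i\in \mathcal Z^N_\delta}(1+|v_i^N|^2) \leq \frac{72\pi\,(1+|\mathcal E^\infty|^2)\|w\|_{L^\infty}}{\delta}.
\end{equation*}
Since this bound is uniform in $N$, combining it with the convergence of the principal term yields the claim. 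No step looks genuinely delicate here: the whole content of the lemma is the combination of the weak convergence \eqref{eq_ass5} with the quantitative control \eqref{eq_couloir} that was built into the choice of covering precisely for this purpose.
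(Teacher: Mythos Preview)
Your proof is correct and follows essentially the same approach as the paper: split off the full sum, pass to the limit using \eqref{eq_ass5}, and control the remainder on $\mathcal Z^N_\delta$ via \eqref{eq_couloir}. The only cosmetic difference is that the paper bounds the remainder by Cauchy--Schwarz (splitting into $\bigl(\tfrac{1}{N}\sum_{\mathcal Z^N_\delta}|v_i^N|^2\bigr)^{1/2}\bigl(\tfrac{1}{N}\sum_{\mathcal Z^N_\delta}|w(h_i^N)|^2\bigr)^{1/2}$ and applying \eqref{eq_couloir} to each factor), whereas you use the elementary inequality $|v|\leq 1+|v|^2$ directly; both yield the same $O(1/\delta)$ bound.
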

\begin{proof}
As $w \in C^{\infty}_c(\Omega)$ and $(T_{\kappa}^N)_{\kappa \in \Kappa^N}$ is a covering of $\text{Supp}(w)$ we have by assumption \eqref{eq_ass5} that:
$$
\int_{\Omega} j(x) \cdot w(x){\rm d}x =\lim_{N \to \infty} \dfrac 1N \sum_{i=1}^N w(h_i^N) \cdot v_i^N =  \lim_{N \to \infty}  \dfrac 1N \sum_{i \in \mathcal I^N} w(h_i^N) \cdot v_i^N \,.
$$
Hence, our proof reduces to find a uniform bound on 
$$
\dfrac 1N \left[ \sum_{i \in \mathcal I^N} w(h_i^N) \cdot v_i^N - \sum_{i \in \mathcal I^N \setminus \mathcal Z^N_{\delta} } w(h_i^N) \cdot v_i^N\right] = 
\dfrac 1N \left[ \sum_{i \in \mathcal Z^N_{\delta} \cap \, \mathcal I^N} w(h_i^N) \cdot v_i^N \right]\,.
$$
However, for large $N,$ there holds:
$$
\left|\dfrac{1}{N} \sum_{i \in \mathcal Z_{\delta}^N \cap \, \mathcal I^N} w(h_i^N) \cdot v_i^N \right| \leq    \left( \dfrac{1}{N} \sum_{i\in\mathcal Z_{\delta}^N} |v_i^N|^2 \right)^{\frac 12} \left(\dfrac{1}{N} \sum_{i \in \mathcal Z_{\delta}^N } |w(h_i^N)|^2 \right)^{\frac 12}\,.
$$
Here, we apply \eqref{eq_couloir} that has guided our choice for the covering $(T_{\kappa}^N)_{\kappa \in \Kappa^{N}}:$
\begin{eqnarray*}
\left( \dfrac{1}{N} \sum_{i\in\mathcal Z_{\delta}^N} |v_i^N|^2 \right) & \leq  & \dfrac{12}{\delta}  \left( 1 + | \mathcal E^{\infty} |^2\right)\,,  \\ 
\left( \dfrac{1}{N} \sum_{i\in\mathcal Z_{\delta}^N} |w(h_i^N)|^2 \right) & \leq &  \dfrac{12}{\delta}   \left( 1 + | \mathcal E^{\infty} |^2\right) \|w\|^2_{L^{\infty}} \,. \\ 
\end{eqnarray*}
Combining these two estimates, we obtain:
$$
\limsup_{N \to \infty} \left|\dfrac{1}{N} \sum_{i \in \mathcal Z_{\delta}^N \cap \, \mathcal I^N } w(h_i^N) \cdot v_i^N \right| \leq 
\dfrac{12}{\delta}  \left( 1 + |\mathcal E^{\infty}|^2\right)\|w\|_{L^{\infty}}\,.
$$
\end{proof}

\begin{lemma}
For $\delta \geq 4$ there holds:
$$
\limsup_{N\to\infty} \left|\dfrac{1}{N} \sum_{\kappa \in \Kappa^N} \sum_{i \in \mathcal I^N_{\kappa} \setminus \mathcal Z^N_{\delta}} w(h_i^N) \cdot \bar{u}^N_{\kappa}   -  \int_{\Omega} \rho(x) \bar{u}(x) \cdot w(x) {\rm d}x\right| \lesssim\dfrac{1}{\sqrt{\delta}} \,.
$$
\end{lemma}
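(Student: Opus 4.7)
The plan is to recast the sum as the pairing of a piecewise-constant function with $u^N$ and then invoke a weak–strong convergence argument, following the outline in the introduction. Define on $\Omega$ the piecewise-constant fields
\begin{equation*}
\sigma^N = \frac{6\pi}{N|\lambda^N|^3}\sum_{\kappa\in\Kappa^N}\Bigl[\sum_{i\in\mathcal I_\kappa^N} w(h_i^N)\Bigr]\mathbf 1_{T_\kappa^N},\qquad
\tilde\sigma^N = \frac{6\pi}{N|\lambda^N|^3}\sum_{\kappa\in\Kappa^N}\Bigl[\sum_{i\in\mathcal I_\kappa^N\setminus\mathcal Z_\delta^N} w(h_i^N)\Bigr]\mathbf 1_{T_\kappa^N},
\end{equation*}
and set $\hat u_\kappa^N = |T_\kappa^N|^{-1}\int_{T_\kappa^N}u^N$. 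Then
\begin{equation*}
\frac{6\pi}{N}\sum_{\kappa\in\Kappa^N}\sum_{i\in\mathcal I_\kappa^N\setminus\mathcal Z_\delta^N} w(h_i^N)\cdot\bar u_\kappa^N = \int_\Omega\sigma^N\cdot u^N - \int_\Omega(\sigma^N-\tilde\sigma^N)\cdot u^N + E_B,
\end{equation*}
where $E_B = \frac{6\pi}{N}\sum_\kappa\sum_{i\in\mathcal I_\kappa^N\setminus\mathcal Z_\delta^N}w(h_i^N)\cdot(\bar u_\kappa^N-\hat u_\kappa^N)$. I plan to show that the first term converges to $6\pi\int_\Omega\rho w\cdot\bar u$, while the two remaining terms are controlled by the right-hand side of the lemma.

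For the main term, assumption \eqref{eq_ass3} (i.e.\ \eqref{eq_propMkappa}) yields $\|\sigma^N\|_{L^\infty}\lesssim M^\infty\|w\|_{L^\infty}$, so up to a subsequence $\sigma^N$ converges weakly-$*$ in $L^\infty(\Omega)$. Testing against $\phi\in C_c^\infty(\Omega)$ and replacing the cell-average $\hat\phi_\kappa$ by $\phi(h_i^N)$ up to an error $O(\lambda^N\|\nabla\phi\|_{L^\infty})$, assumption \eqref{eq_ass4} identifies the limit as $6\pi\rho w$. Since $u^N\to\bar u$ strongly in $L^{q}(\Omega)$ for every $q<6$ by Rellich, I conclude
\begin{equation*}
\int_\Omega \sigma^N\cdot u^N\longrightarrow 6\pi\int_\Omega \rho(x)w(x)\cdot\bar u(x)\,\mathrm dx.
\end{equation*}

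To handle the correction $\int_\Omega(\sigma^N-\tilde\sigma^N)\cdot u^N$, I combine two bounds on $\sigma^N-\tilde\sigma^N$: from \eqref{eq_couloir} the $L^1$ norm is at most $\frac{6\pi\|w\|_{L^\infty}|\mathcal Z_\delta^N|}{N}\lesssim \frac{(1+|\mathcal E^\infty|^2)}{\delta}\|w\|_{L^\infty}$, while \eqref{eq_propMkappa} still gives an $L^\infty$ bound $\lesssim M^\infty\|w\|_{L^\infty}$. Interpolating yields
\begin{equation*}
\|\sigma^N-\tilde\sigma^N\|_{L^{4/3}}\leq \|\sigma^N-\tilde\sigma^N\|_{L^1}^{3/4}\|\sigma^N-\tilde\sigma^N\|_{L^\infty}^{1/4}\lesssim |M^\infty|^{1/4}\Bigl[\tfrac{1+|\mathcal E^\infty|^2}{\delta}\Bigr]^{3/4}\|w\|_{L^\infty},
\end{equation*}
and pairing with $\|u^N\|_{L^4}\lesssim \mathcal E^\infty$ via H\"older produces a bound of the form $|M^\infty|^{1/4}(1+|\mathcal E^\infty|^{5/2})/\delta^{3/4}\|w\|_{L^\infty}$, which is dominated by the target $1/\sqrt\delta$.

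For $E_B$, a Cauchy--Schwarz together with \eqref{eq_propMkappa} gives $|E_B|\lesssim M^\infty|\lambda^N|^3\|w\|_{L^\infty}\sum_\kappa|\bar u_\kappa^N-\hat u_\kappa^N|$. Writing $\bar u_\kappa^N-\hat u_\kappa^N$ as an average of $u^N-\hat u_\kappa^N$ over $[T_\kappa^N]_{2\delta}$ and applying Poincar\'e--Wirtinger on the cube $T_\kappa^N$ (with constant $\lesssim \lambda^N$ by the scale invariance recalled in Lemma \ref{lem_PW}) yields $|\bar u_\kappa^N-\hat u_\kappa^N|^2\lesssim \frac{\delta}{\lambda^N}\|\nabla u^N\|_{L^2(T_\kappa^N)}^2$, using $|[T_\kappa^N]_{2\delta}|\gtrsim|\lambda^N|^3/\delta$. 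Summing in $\kappa$ and applying Cauchy--Schwarz with $\card\Kappa^N\lesssim |\lambda^N|^{-3}$ then the uniform $H^1$ bound $\|\nabla u^N\|_{L^2(\Omega)}\leq\mathcal E^\infty$ gives $|E_B|\lesssim \sqrt\delta\,\lambda^N M^\infty\mathcal E^\infty\|w\|_{L^\infty}\to 0$ as $N\to\infty$ for fixed $\delta$. The main technical point is verifying the distributional limit of $\sigma^N$ while juggling the deleted indices, and keeping track of the Poincar\'e constant so that no factor of $\lambda^N$ or $\delta$ is lost when summing cube-by-cube; everything else is bookkeeping once the interpolation strategy is fixed.
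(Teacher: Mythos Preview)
Your argument is correct, but it differs from the paper's in one organizational choice. The paper defines its $\sigma^N$ (and the correction $\tilde\sigma^N$) supported on the annuli $[T_\kappa^N]_{2\delta}$ rather than on the full cubes $T_\kappa^N$, precisely so that the pairing $\int_\Omega\sigma^N\cdot u^N$ reproduces the annulus averages $\bar u_\kappa^N$ directly; there is then no term like your $E_B$. The price is that the normalizing volume $|[T_\kappa^N]_{2\delta}|\sim|\lambda^N|^3/\delta$ inserts an extra factor $\delta$ into the $L^\infty$ bound for the correction $\tilde\sigma^N$, which after the $L^1$--$L^\infty$ interpolation yields exactly the $\delta^{-1/2}$ in the statement. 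Your route---full-cube $\sigma^N$ plus the Poincar\'e--Wirtinger correction $E_B$---avoids that $\delta$ in the $L^\infty$ bound and therefore actually produces $\delta^{-3/4}$ for the deleted-index term, slightly better than needed; the extra term $E_B$ is harmless since it vanishes with $\lambda^N$. Both approaches rely on the same weak--strong convergence mechanism for the principal term and the same $L^{4/3}$--$L^4$ pairing for the correction, so the difference is purely in how the mismatch between the annulus average $\bar u_\kappa^N$ and the integral representation is absorbed.
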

\begin{proof} As in the previous proof, let first complete the sum by reintroducing the $\mathcal Z_{\delta}^N$ indices:
\begin{equation} \label{eq_rewriterho}
\dfrac{1}{N} \sum_{\kappa \in \Kappa^N} \sum_{i \in \mathcal I^N_{\kappa} \setminus \mathcal Z^N_{\delta}} w(h_i^N) \cdot \bar{u}^N_{\kappa} 
= \dfrac{1}{N} \sum_{\kappa \in \Kappa^N} \sum_{i \in \mathcal{I}^N_{\kappa}} w(h_i^N) \cdot \bar{u}^N_{\kappa}  + \tilde{E}rr^{N}
\end{equation}
where:
$$
\tilde{E}rr^N = \dfrac{1}{N} \sum_{\kappa \in \Kappa^N} \sum_{i \in \mathcal I^N_{\kappa} \cap \mathcal Z^N_{\delta}} w(h_i^N) \cdot \bar{u}^N_{\kappa} .
$$
For the first term on the right-hand side of \eqref{eq_rewriterho},  we remark that:
$$
\dfrac{1}{N} \sum_{\kappa \in \Kappa^N} \sum_{i \in \mathcal I^N_{\kappa}} w(h_i^N) \cdot \bar{u}^N_{\kappa} 
=
\left(1 - \left(1-\frac 1\delta\right)^{3}\right)^{-1} \dfrac{1}{|\lambda^N|^3 N}\sum_{\kappa \in \Kappa^N} \int_{[T_{\kappa}^N]_{2\delta}} \left(\sum_{i \in \mathcal I^N_{\kappa}} w(h_i^N) \right) \cdot u^N\,.
$$
So, we introduce:
$$
\sigma^N= 
\left(1 - \left(1-\frac 1\delta\right)^{3}\right)^{-1} \dfrac{1}{N|\lambda^N|^3} \sum_{\kappa \in \Kappa^N}  \left(\sum_{i \in \mathcal I^N_{\kappa}} w(h_i^N) \right) \mathbf{1}_{[T_{\kappa}^N]_{2\delta}}\,,
$$
for which:
$$
\dfrac{1}{N} \sum_{\kappa \in \Kappa^N} \sum_{i \in \mathcal I^N_{\kappa}} w(h_i^N) \cdot \bar{u}^N_{\kappa} 
= \int_{\Omega} \sigma^N(x) \cdot u^N(x){\rm d}x. 
$$

\medskip

On the one-hand, we note that:
$$
\|\sigma^N\|_{L^1(\Omega)} \leq \dfrac{1}{N} \sum_{\kappa \in \Kappa^N} M^N_{\kappa} \|w\|_{L^{\infty}} \,,
$$
where $\sum_{\kappa \in \Kappa^N} M^N_{\kappa} \leq N,$ so that:  
$$
\|\sigma^N\|_{L^1(\Omega)} \leq  \|w\|_{L^{\infty}} \,.
$$
Complementarily, because of assumption \eqref{eq_ass_concentration}, we also have :
\begin{eqnarray*}
\|\sigma^N\|_{L^{\infty}(\Omega)} & \leq&  \left(1 - \left(1-\frac 1\delta\right)^{3}\right)^{-1} \sup_{\kappa \in \Kappa^N} \dfrac{M^N_{\kappa}}{N|\lambda^N|^3} \|w\|_{L^{\infty}} \\
									&\lesssim &  \left(1 - \left(1-\frac 1\delta\right)^{3}\right)^{-1} \,,
\end{eqnarray*} 
and $\sigma^N$ is bounded in all $L^q$-spaces. 

\medskip

On the other hand, for any $v \in C^{\infty}_c(\Omega)$ we have 
$$
\int_{\Omega} \sigma^N (x)\cdot v(x){\rm d}x =  \dfrac{1}{N} \sum_{\kappa \in \Kappa^N} \sum_{i \in \mathcal I_{\kappa}^N} w(h_i^N) \cdot \bar{v}_{\kappa}^N
$$
with 
$$
 \bar{v}_{\kappa}^N = \dfrac{1}{|[T^{N}_{\kappa}]_{2\delta}|} \int_{[T_{N}^{\kappa}]_{2\delta}} v(x){\rm d}x.
$$
We remark that, for any $i \in \mathcal I_{\kappa}^N,$ $h_i^N$ is inside $T^{N}_{\kappa}$ whose diameter is $\lambda^N.$
This entails:
$$
\left| \bar{v}_{\kappa}^N  - v(h_i^N) \right| \lesssim \lambda^N \|\nabla v\|_{L^{\infty}} .
$$
Gathering these identities for all indices $i$ in all the cubes $T_{\kappa}^N,$ we infer  : 
$$
\left| \int_{\Omega} \sigma^N(x) \cdot v(x){\rm d}x -   \dfrac{1}{N}\sum_{i \in \mathcal I^N} w(h_i^N) \cdot v(h_i^N) \right| \lesssim  \lambda^N \|\nabla v\|_{L^{\infty}}\|w\|_{L^{\infty}}\,.
$$
Consequently, assumption \eqref{eq_ass4} implies that:
$$
\lim_{N \to \infty} \int_{\Omega} \sigma^N(x) \cdot v(x){\rm d}x = \int_{\Omega} \rho(x) w(x) \cdot v(x){\rm d}x\,,
$$
and $\sigma^N \rightharpoonup \rho w$ weakly in $L^q(\Omega)$ for arbitrary $q \in (1,\infty)$. 
Combining then the weak convergence of $\sigma^N$ in $L^{2}(\Omega)$ and the strong convergence of $u^N$ in $L^2(\Omega)$
(up to the extraction of a subsequence), we have:
$$
\lim_{N \to \infty} \int_{\Omega} \sigma^N \cdot u^N = \int_{\Omega} \rho w \cdot \bar{u}.
$$

\medskip

As for the remainder term, we introduce:
$$
\tilde{\sigma}^N  = \left(1 - \left(1-\frac 1\delta\right)^{3}\right)^{-1} \dfrac{1}{N|\lambda^N|^3} \sum_{\kappa \in \Kappa^N}  \left(\sum_{i \in \mathcal I^N_{\kappa} \cap \mathcal Z^{N}_{\delta}} |w(h_i^N)| \right) \mathbf{1}_{[T_{\kappa}^N]_{2\delta}}\,.
$$
so that:
$$
|\tilde{E}rr^N| \leq \int_{\Omega} \tilde{\sigma}^N(x) |u^N(x)|{\rm d}x\,.
$$
With similar arguments as in the previous computations,  we have, applying \eqref{eq_couloir}:
$$
\|\tilde{\sigma}^N\|_{L^1(\Omega)} \leq \dfrac{1}{N} \card \mathcal Z_{\delta}^N \|w\|_{L^\infty} \lesssim \dfrac{1}{\delta}.
$$
Furthermore, we have:
$$
\|\tilde{\sigma}^N\|_{L^{\infty}(\Omega)} \lesssim  \delta .
$$
Consequently, by interpolation, we obtain:
$$
\|\tilde{\sigma}^N\|_{L^{\frac 43}(\Omega)} \lesssim  \dfrac{ 1}{\sqrt{\delta}} \,.
$$
As $u^N$ is bounded in $L^{4}(\Omega)$ by Sobolev embedding, this yields that:
$$
|\tilde{E}rr^N| \lesssim \dfrac{1}{\sqrt{\delta}}.
$$
This ends the proof.
\end{proof}

\section{Two (counter-)examples} 

In this paper, we derive the Stokes-Brinkman system by homogenizing the Stokes problem in a perforated domain. 
Our main result is valid only in the dilution regime specified by assumptions \eqref{eq_ass_dmin}-\eqref{eq_ass_concentration}. 

Assumption \eqref{eq_ass_dmin} is critical to our computation. It implies that, zooming around one of the holes, the solution to the $N$-hole problem looks alike the solution in the exterior of one hole. Then, the action of the holes on the flow can be computed by adding the contribution of all the holes as if they were alone in the fluid (but with a non-trivial speed at infinity). 
In the first part of this section, we discuss what happens when this assumption degenerates and $d_{min}^N$ decays like $1/N.$

Assumption \eqref{eq_ass_concentration} is motivated by the fact that we want to consider particle distribution functions 
$(x,v) \mapsto f(x,v)$  such that  the associated density $x \mapsto \rho(x)$ is bounded. This implies that, 
for arbitrary $\lambda > 0$ the density of  particles in balls of radius $\lambda$ satisfies
$$
\sup_{x \in \Omega}\, \langle \rho, \mathbf{1}_{B(x,\lambda)} \rangle \leq \|\rho\|_{L^{\infty}} \lambda^3.
$$
One may prove that under the sole assumption \eqref{eq_ass4}, {\em i.e.} the sequence of discrete density measures $\rho^N$ converges to $\rho(x) {\rm d}x$ with $\rho \in L^{\infty}(\Omega),$  implies  that there exists a sequence $(\lambda^N)_{N\in\mathbb N}$ converging to $0$  for which:
$$
\sup_{N\in \mathbb N}\dfrac{1}{|\lambda^N|^3}\sup_{x \in \Omega}\, \langle \rho^N, \mathbf{1}_{B(x,\lambda^N)} \rangle < \infty\,.
$$
Assumption \eqref{eq_ass_concentration} require this property for a particular sequence $(\lambda^N)_{N\in \mathbb N}.$ As mentioned in the introduction, the goal is to fix this assumption for the largest sequence $(\lambda^N)_{N\in \mathbb N}$ possible. So in the second part of this section, we discuss the optimality of the sequence given by \eqref{eq_ass_lambdaN}.

\medskip

\subsection{On assumption \eqref{eq_ass_dmin}}
If $d_{min}^N \sim 1/N$ or $d_{min}^N << 1/N$, 
 the distance between holes becomes comparable to their common radius and the influence of the holes on the solution is more intricate. 
In such a case, we expect that one can pack the holes into sub-groups containing holes between which the distance is smaller or comparable to their common radius. Then, each of these packs has to be considered  as one hole with a complicated shape instead of a group of holes. 

This remark applies in the following example.  Let divide the container $\Omega = [0,1]^3$ into $N/2$ cubes $(T_k^N)_{k=1,\ldots,N/2}$ of width $(2/N)^{1/3}.$ Each of the cubes contains $2$ holes so that the centers of these holes are diametrically symmetric on a sphere of radius  $(1+h)/N$ ($h$ is a positive parameter) centered in the center of the cube (see Figure \ref{FPC}). The geometry is then completly fixed by the set of vectors $(h_{k})_{k=1,\ldots,N/2}$ linking the centers of two spheres in the same cube.  Broadly, it comes from the proof in the previous sections that the Brinkman term in the limit problem can be computed by zooming in any of the elementary cells (with a scale $1/N$), computing the drag terms involved by the Stokes problem in the cells and summing them after rescaling. In this example, one cell corresponds to a cube $T_k^N$ which contains two spherical holes. Then, the drag term is computed by considering the Stokes problem in an exterior domain whose shape is the complement of two unit balls.  We expect that, summing these contributions, the resulting Brinkman term has a different structure than  "$6\pi(j - \rho u)$". Especially, it should depend nonlinearly on the  $(h_k)_{k=1,\ldots,N/2}$ and anisotropically on $u$.
Such computations are handled in \cite{MSH} to which we refer for more details.

 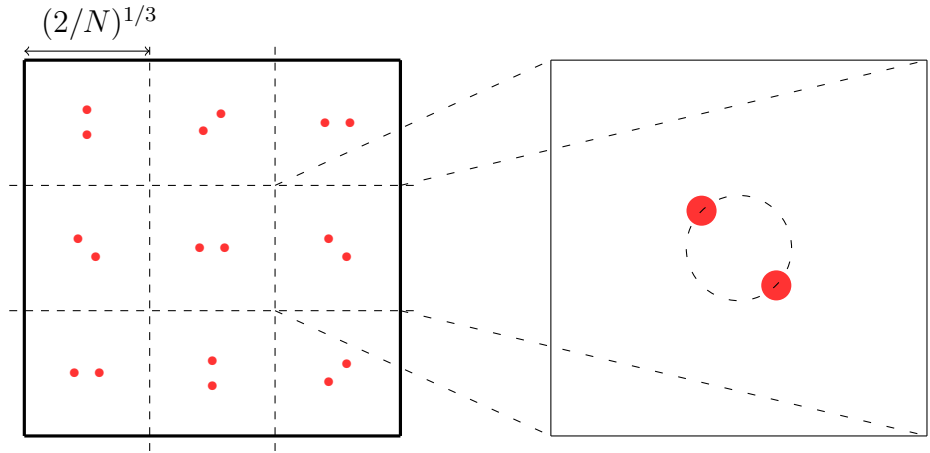
\begin{figure}[h]
 \begin{center}
\begin{tikzpicture}

\draw[very thick] (0,0) -- (5,0); 
\draw[very thick] (0,0) -- (0,5); 
\draw[very thick] (0,5) -- (5,5); 
\draw[very thick] (5,0) -- (5,5); 


\draw[color=red!80] (5/6+1/6*1,5/6+1/6*0) node{\tiny$\bullet$} ;
\draw[color=red!80] (5/6-1/6*1,5/6+1/6*0) node{\tiny$\bullet$} ;
\draw[color=red!80] (5/3+5/6+1/6*0,5/6+1/6*1) node{\tiny$\bullet$} ;
\draw[color=red!80] (5/3+5/6+1/6*0,5/6-1/6*1) node{\tiny$\bullet$} ;
\draw[color=red!80] (10/3+5/6+1/6*.71,5/6+1/6*.71) node{\tiny$\bullet$} ;
\draw[color=red!80] (10/3+5/6-1/6*.71,5/6-1/6*.71) node{\tiny$\bullet$} ;

\draw[color=red!80] (5/6-1/6*.71,5/3+5/6+1/6*.71) node{\tiny$\bullet$} ;
\draw[color=red!80] (5/6+1/6*.71,5/3+5/6-1/6*.71) node{\tiny$\bullet$} ;
\draw[color=red!80] (5/3+5/6+1/6*1,5/3+5/6+1/6*0) node{\tiny$\bullet$} ;
\draw[color=red!80] (5/3+5/6-1/6*1,5/3+5/6-1/6*0) node{\tiny$\bullet$} ;
\draw[color=red!80] (10/3+5/6+1/6*.71,5/3+5/6-1/6*.71) node{\tiny$\bullet$} ;
\draw[color=red!80] (10/3+5/6-1/6*.71,5/3+5/6+1/6*.71) node{\tiny$\bullet$} ;

\draw[color=red!80] (5/6+1/6*0,10/3+5/6+1/6*1) node{\tiny$\bullet$} ;
\draw[color=red!80] (5/6-1/6*0,10/3+5/6-1/6*1) node{\tiny$\bullet$} ;
\draw[color=red!80] (5/3+5/6+1/6*.71,10/3+5/6+1/6*.71) node{\tiny$\bullet$} ;
\draw[color=red!80] (5/3+5/6-1/6*.71,10/3+5/6-1/6*.71) node{\tiny$\bullet$} ;
\draw[color=red!80] (10/3+5/6+1/6*1,10/3+5/6+1/6*0) node{\tiny$\bullet$} ;
\draw[color=red!80] (10/3+5/6-1/6*1,10/3+5/6-1/6*0) node{\tiny$\bullet$} ;

\draw[dashed] (5/3,-0.2) -- (5/3,5.2) ;
\draw[dashed] (10/3,-0.2) -- (10/3,5.2) ;
\draw[dashed] (-0.2,10/3) -- (5.2,10/3) ;
\draw[dashed] (-0.2,5/3) -- (5.2,5/3) ;

\draw[<->] (0,5.12) -- (5/3,5.12) ; 
 
\draw (1,5.12) node [above] {$(2/N)^{1/3}$} ; 

\draw [loosely dashed] (5,5/3) -- (12,0) ; 
\draw [loosely dashed] (5,10/3) -- (12,5) ; 
\draw [loosely dashed] (10/3,5/3) -- (7,0) ; 
\draw [loosely dashed] (10/3,10/3) -- (7,5) ;

\draw (7,0) -- (12,0); 
\draw (7,0) -- (7,5); 
\draw (7,5) -- (12,5); 
\draw (12,0) -- (12,5); 

\fill[color = red!80] (19/2-.7*.71,5/2+.7*.71) circle (.2) ; 
\fill[color = red!80] (19/2+.7*.71,5/2-.7*.71) circle (.2) ; 
\draw[loosely dashed] (19/2,5/2) circle (.7) ; 

{
}
\end{tikzpicture}
\caption{First counter-example configuration \label{FPC}}  
\end{center}

\end{figure}

\subsection{On assumption \eqref{eq_ass_concentration}}
Our next example is a variant of the construction in \cite{Allaire}. In particular, 
 we go back to the case of a Stokes problem in a bounded perforated domain 
with a source term $f \in L^2(\Omega).$ We consider vanishing boundary conditions on the holes for simplicity. The holes will be distributed (almost) periodically so that their density converges to a uniform distribution in $[0,1]^3.$ In particular, if our main result were extending to this case, the homogenized system should read:
$$
\begin{array}{rcl}
-\Delta u + \nabla p &=& f- 6\pi \mathbf{1}_{[0,1]^3} u \\
{\rm div} u&=&  0 
\end{array}
\quad \text{ in $\Omega.$}
$$

Nevertheless, let consider $\Omega$ a smooth bounded domain containing $[0,1]^3$ and $(P^N)_{N\in\mathbb N}$ a diverging sequence of integers. We  assume that 
 \begin{equation} \label{eq_geom1}
\lim_{N \to \infty} \dfrac{P^N }{N}  = 0.
 \end{equation}
 Given $N \in \mathbb N$ we cover $\mathbb R^3$ with disjoint cubes $(T_k^N)_{k \in \mathbb Z^3}$ of width $\sigma^N = |P^N/N|^{1/3}.$ For $k \in \mathbb Z^3,$
 we denote by $x_k^N$ the center of $T_k^N$ so that $T_k^N = B_{\infty}(x_k^N,\sigma^N/2).$ For $N$
 sufficiently large, we extract a list 
 $\mathcal K^N$ containing $\lfloor N/P^N \rfloor +1$  indices of  cubes $T_k^N$ that are inside $\Omega.$ To do this, we first choose all the cubes that are included in $[0,1]^3$ and we complement the list by choosing at most
 one other cube that is included in $\Omega.$  
 For the $\lfloor N/P^N \rfloor$ first cubes of the list $\Kappa^N$ (including all the ones that are inside $[0,1]^3$), we perform $P^N$ holes in $T_k^N.$
 The holes are distributed concentrically  around the center $x_k^N$ of $T_k^N$ on an orthogonal grid of step $2d_m^N > 0.$ In particular, we center the grid
 so that all the  perforated sites are inside $B_{\infty}(x_k^N,  \lfloor (|P^N|^{1/3} +1) \rfloor d_m^N).$ We  assume below that:
 \begin{equation}\label{eq_geom2} 
 \lim_{N \to \infty} N^{\frac 13} d_m^N = 0\,, \qquad \lim_{N\to \infty} N d_{m}^N = +\infty\,. 
 \end{equation}
 The first part of this asumption entails that, for $N$ large, all the holes around $x_{k}^N$ are inside $T_{k}^N.$
 In the last cube, we perform $N - \lfloor N/P^N \rfloor P^N$ holes in the same way so that we have eventually $N$ holes of radius $1/N$
 in $\Omega$  that we label $(B(h_i^N,1/N))_{i=1,\ldots,N}$. See Figure \ref{SPC} for an illustration.

\medskip

\begin{figure}[h]

\begin{center}
\begin{tikzpicture}

\draw[very thick] (0,0) -- (10,0); 
\draw[very thick] (0,0) -- (0,10); 
\draw[very thick] (0,10) -- (10,10); 
\draw[very thick] (10,0) -- (10,10); 

\draw[color=red!80] (4/3,4/3) node{\tiny$\bullet$} ;
\draw[color=red!80] (4/3,5/3) node{\tiny$\bullet$} ;
\draw[color=red!80] (4/3,2) node{\tiny$\bullet$} ;
\draw[color=red!80] (5/3,+4/3) node{\tiny$\bullet$} ;
\draw[color=red!80] (5/3,+5/3) node{\tiny$\bullet$} ;
\draw[color=red!80] (5/3,+2) node{\tiny$\bullet$} ;
\draw[color=red!80] (2,+4/3) node{\tiny$\bullet$} ;
\draw[color=red!80] (2,+5/3) node{\tiny$\bullet$} ;
\draw[color=red!80] (2,+2) node{\tiny$\bullet$} ;
\draw[color=red!80] (10/3+4/3,+4/3) node{\tiny$\bullet$} ;
\draw[color=red!80] (10/3+4/3,+5/3) node{\tiny$\bullet$} ;
\draw[color=red!80] (10/3+4/3,+2) node{\tiny$\bullet$} ;
\draw[color=red!80] (10/3+5/3,+4/3) node{\tiny$\bullet$} ;
\draw[color=red!80] (10/3+5/3,+5/3) node{\tiny$\bullet$} ;
\draw[color=red!80] (10/3+5/3,+2) node{\tiny$\bullet$} ;
\draw[color=red!80] (10/3+2,+4/3) node{\tiny$\bullet$} ;
\draw[color=red!80] (10/3+2,+5/3) node{\tiny$\bullet$} ;
\draw[color=red!80] (10/3+2,+2) node{\tiny$\bullet$} ;
\draw[color=red!80] (20/3+4/3,+4/3) node{\tiny$\bullet$} ;
\draw[color=red!80] (20/3+4/3,+5/3) node{\tiny$\bullet$} ;
\draw[color=red!80] (20/3+4/3,+2) node{\tiny$\bullet$} ;
\draw[color=red!80] (20/3+5/3,+4/3) node{\tiny$\bullet$} ;
\draw[color=red!80] (20/3+5/3,+5/3) node{\tiny$\bullet$} ;
\draw[color=red!80] (20/3+5/3,+2) node{\tiny$\bullet$} ;
\draw[color=red!80] (20/3+2,+4/3) node{\tiny$\bullet$} ;
\draw[color=red!80] (20/3+2,+5/3) node{\tiny$\bullet$} ;
\draw[color=red!80] (20/3+2,+2) node{\tiny$\bullet$} ;

\draw[color=red!80] (4/3,10/3+4/3) node{\tiny$\bullet$} ;
\draw[color=red!80] (4/3,10/3+5/3) node{\tiny$\bullet$} ;
\draw[color=red!80] (4/3,10/3+2) node{\tiny$\bullet$} ;
\draw[color=red!80] (5/3,10/3+4/3) node{\tiny$\bullet$} ;
\draw[color=red!80] (5/3,10/3+5/3) node{\tiny$\bullet$} ;
\draw[color=red!80] (5/3,10/3+2) node{\tiny$\bullet$} ;
\draw[color=red!80] (2,10/3+4/3) node{\tiny$\bullet$} ;
\draw[color=red!80] (2,10/3+5/3) node{\tiny$\bullet$} ;
\draw[color=red!80] (2,10/3+2) node{\tiny$\bullet$} ;
\draw[color=red!80] (10/3+4/3,10/3+4/3) node{\tiny$\bullet$} ;
\draw[color=red!80] (10/3+4/3,10/3+5/3) node{\tiny$\bullet$} ;
\draw[color=red!80] (10/3+4/3,10/3+2) node{\tiny$\bullet$} ;
\draw[color=red!80] (10/3+5/3,10/3+4/3) node{\tiny$\bullet$} ;
\draw[color=red!80] (10/3+5/3,10/3+5/3) node{\tiny$\bullet$} ;
\draw[color=red!80] (10/3+5/3,10/3+2) node{\tiny$\bullet$} ;
\draw[color=red!80] (10/3+2,10/3+4/3) node{\tiny$\bullet$} ;
\draw[color=red!80] (10/3+2,10/3+5/3) node{\tiny$\bullet$} ;
\draw[color=red!80] (10/3+2,10/3+2) node{\tiny$\bullet$} ;
\draw[color=red!80] (20/3+4/3,10/3+4/3) node{\tiny$\bullet$} ;
\draw[color=red!80] (20/3+4/3,10/3+5/3) node{\tiny$\bullet$} ;
\draw[color=red!80] (20/3+4/3,10/3+2) node{\tiny$\bullet$} ;
\draw[color=red!80] (20/3+5/3,10/3+4/3) node{\tiny$\bullet$} ;
\draw[color=red!80] (20/3+5/3,10/3+5/3) node{\tiny$\bullet$} ;
\draw[color=red!80] (20/3+5/3,10/3+2) node{\tiny$\bullet$} ;
\draw[color=red!80] (20/3+2,10/3+4/3) node{\tiny$\bullet$} ;
\draw[color=red!80] (20/3+2,10/3+5/3) node{\tiny$\bullet$} ;
\draw[color=red!80] (20/3+2,10/3+2) node{\tiny$\bullet$} ;

\draw[color=red!80] (4/3,20/3+4/3) node{\tiny$\bullet$} ;
\draw[color=red!80] (4/3,20/3+5/3) node{\tiny$\bullet$} ;
\draw[color=red!80] (4/3,20/3+2) node{\tiny$\bullet$} ;
\draw[color=red!80] (5/3,20/3+4/3) node{\tiny$\bullet$} ;
\draw[color=red!80] (5/3,20/3+5/3) node{\tiny$\bullet$} ;
\draw[color=red!80] (5/3,20/3+2) node{\tiny$\bullet$} ;
\draw[color=red!80] (2,20/3+4/3) node{\tiny$\bullet$} ;
\draw[color=red!80] (2,20/3+5/3) node{\tiny$\bullet$} ;
\draw[color=red!80] (2,20/3+2) node{\tiny$\bullet$} ;
\draw[color=red!80] (10/3+4/3,20/3+4/3) node{\tiny$\bullet$} ;
\draw[color=red!80] (10/3+4/3,20/3+5/3) node{\tiny$\bullet$} ;
\draw[color=red!80] (10/3+4/3,20/3+2) node{\tiny$\bullet$} ;
\draw[color=red!80] (10/3+5/3,20/3+4/3) node{\tiny$\bullet$} ;
\draw[color=red!80] (10/3+5/3,20/3+5/3) node{\tiny$\bullet$} ;
\draw[color=red!80] (10/3+5/3,20/3+2) node{\tiny$\bullet$} ;
\draw[color=red!80] (10/3+2,20/3+4/3) node{\tiny$\bullet$} ;
\draw[color=red!80] (10/3+2,20/3+5/3) node{\tiny$\bullet$} ;
\draw[color=red!80] (10/3+2,20/3+2) node{\tiny$\bullet$} ;
\draw[color=red!80] (20/3+4/3,20/3+4/3) node{\tiny$\bullet$} ;
\draw[color=red!80] (20/3+4/3,20/3+5/3) node{\tiny$\bullet$} ;
\draw[color=red!80] (20/3+4/3,20/3+2) node{\tiny$\bullet$} ;
\draw[color=red!80] (20/3+5/3,20/3+4/3) node{\tiny$\bullet$} ;
\draw[color=red!80] (20/3+5/3,20/3+5/3) node{\tiny$\bullet$} ;
\draw[color=red!80] (20/3+5/3,20/3+2) node{\tiny$\bullet$} ;
\draw[color=red!80] (20/3+2,20/3+4/3) node{\tiny$\bullet$} ;
\draw[color=red!80] (20/3+2,20/3+5/3) node{\tiny$\bullet$} ;
\draw[color=red!80] (20/3+2,20/3+2) node{\tiny$\bullet$} ;

\draw[dashed] (10/3,-0.2) -- (10/3,10.2) ;
\draw[dashed] (20/3,-0.2) -- (20/3,10.2) ;
\draw[dashed] (-0.2,20/3) -- (10.2,20/3) ;
\draw[dashed] (-0.2,10/3) -- (10.2,10/3) ;

\draw[<->] (0,10.24) -- (10/3,10.24) ; 
\draw[<->] (10/3+4/3,10/3+2) -- (10/3+5/3,10/3+2) ;
 
\draw (2,10.24) node [above] {$(P^N/N)^{1/3}$} ; %
\draw (10/3+9/6,10/3+2) node [above] {$2d^N_m$} ; 


{
}
\end{tikzpicture}
\caption{Second counter-example configuration. \label{SPC}}
\end{center}

\end{figure}
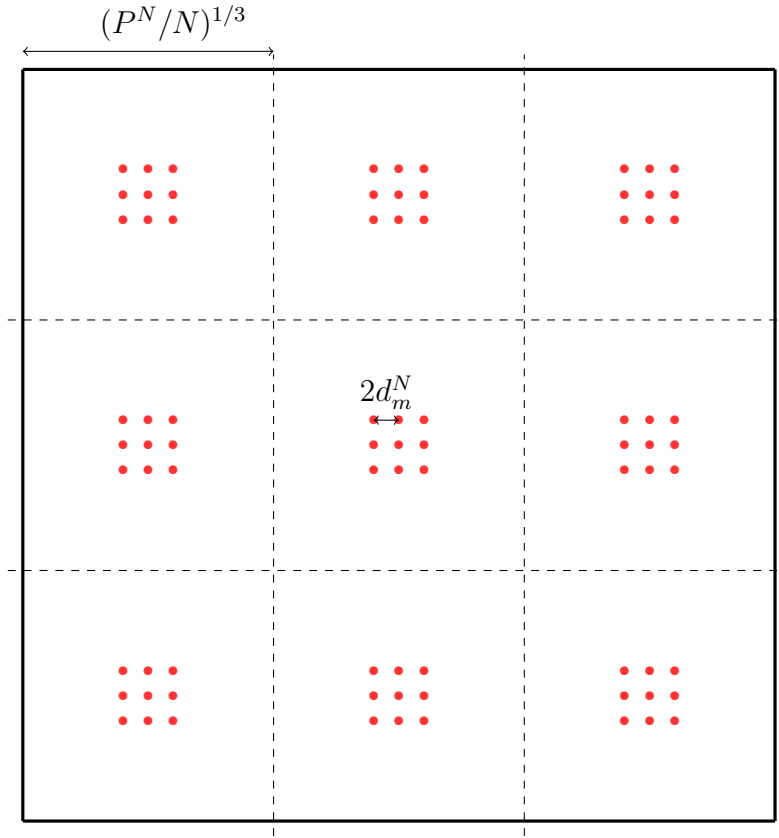

\medskip

With these conventions, we introduce $f \in L^2(\Omega)$ and are interested now in the asymptotic behavior of the unique
$u^N \in H^1_0(\Omega)$ such that there exists a pressure $p^N$ for which there holds:
 \begin{equation} \label{eq_stokesN_ccl}
\left\{
\begin{array}{rcl}
- \Delta u^N + \nabla p^N &=& f\,, \, \\
{\rm div} \, u^N &= & 0 \,,
\end{array}
\right.
\quad \text{ on $\mathcal F^{N} := \Omega \setminus \bigcup_{i=1}^N B(h_i^N,1/N)$}\,,
\end{equation}
completed with boundary conditions 
\begin{equation} \label{cab_stokesN_ccl}
\left\{
\begin{array}{rcll}
u^N &=& 0  \,, &  \text{on $\partial  B(h_i^{N},1/N)$} \,, \\
u^N &=& 0 \,, & \text{on $\partial \Omega$}\,.
\end{array}
\right.
\end{equation}

\medskip

We observe that the Stokes regime computed in \cite{Allaire} extends to this example:
\begin{proposition}
Assume that \eqref{eq_geom1}-\eqref{eq_geom2} are in force together with:
\begin{equation} \label{eq_geom3}
\lim_{N \to \infty} \dfrac{Nd_m^N}{|P^N|^{\frac 23}} = 0\,.
\end{equation}
Then, the sequence $u^N$ converges in $H^1_0(\Omega)-w$ to  the unique $\bar{u} \in H^1_0(\Omega)$
such that there exists $\bar{p} \in L^2(\Omega)$ for which:
$$
\begin{array}{rcl}
-\Delta \bar{u} + \nabla \bar{p} &=& f \\
{\rm div} \, \bar{u}&=&  0 
\end{array}
\quad \text{ in $\Omega.$}
$$
\end{proposition}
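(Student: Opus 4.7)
The plan is to combine a uniform energy estimate with a vanishing-capacity correction of the test functions, in the spirit of Cioranescu--Murat and Allaire. The main point is that by \eqref{eq_geom3} the total ``Stokes capacity'' of the clusters tends to $0$, so the would-be Brinkman term disappears in the limit. The main obstacle is the construction of a divergence-free corrector whose Dirichlet energy vanishes.

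\textbf{Step 1: Uniform bound and extraction.} Testing \eqref{eq_stokesN_ccl} against $u^N$ (which, extended by $0$ on the holes, belongs to $H^1_0(\Omega)$ and is divergence-free) and applying Poincar\'e's inequality on $\Omega$ yields $\|\nabla u^N\|_{L^2(\Omega)} \leq C_\Omega \|f\|_{L^2(\Omega)}$. Extract $u^N \rightharpoonup \bar u$ in $H^1_0(\Omega)$; then $\mathrm{div}\, \bar u = 0$.

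\textbf{Step 2: Corrector construction.} For any divergence-free $w \in C^{\infty}_c(\Omega)$, I would build $w^N \in H^1_0(\mathcal F^N)$, divergence-free, with $w^N \to w$ strongly in $H^1_0(\Omega)$. For each $k \in \mathcal K^N$, enclose its cluster in $C_k^N := B(x_k^N, R^N)$ with $R^N := 2(P^N)^{1/3}d_m^N$. By \eqref{eq_geom1}--\eqref{eq_geom2}, for $N$ large: $R^N \ll \sigma^N$, the $2C_k^N$ are pairwise disjoint and contained in $\Omega$, and each hole $B(h_i^N, 1/N)$ lies strictly inside its $C_k^N$. Pick scalar cut-offs $\phi_k^N \in C^{\infty}_c(2C_k^N)$ with $\phi_k^N \equiv 1$ on $C_k^N$ and $\|\nabla \phi_k^N\|_{L^2}^2 \lesssim R^N$, and set
\[
w^N := w - \sum_{k \in \mathcal K^N} \Bigl[\phi_k^N \, w - \mathfrak B_{x_k^N,R^N,2R^N}\bigl[x \mapsto w(x) \cdot \nabla \phi_k^N(x)\bigr]\Bigr].
\]
The Bogovskii lift is well defined because $\int_{2C_k^N \setminus C_k^N} w \cdot \nabla \phi_k^N \, dx = -\int_{\partial C_k^N} w \cdot n \, d\sigma = -\int_{C_k^N} \mathrm{div}\, w = 0$. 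By construction $w^N$ is divergence-free, vanishes on $\partial \Omega$, and equals $0$ on every hole (since $\phi_k^N = 1$ there while the Bogovskii term is supported in the annulus). Applying Lemma~\ref{lem_div} and the assumed bound on $\|\nabla \phi_k^N\|_{L^2}$ one obtains
\[
\|\nabla(w^N - w)\|_{L^2(\Omega)}^2 \lesssim \|w\|_{W^{1,\infty}}^2 \sum_{k \in \mathcal K^N} R^N \lesssim \|w\|_{W^{1,\infty}}^2 \, \frac{N}{P^N} (P^N)^{1/3} d_m^N = \|w\|_{W^{1,\infty}}^2 \, \frac{N d_m^N}{|P^N|^{2/3}} \xrightarrow[N\to\infty]{} 0
\]
by \eqref{eq_geom3}. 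This is the crux: treating the whole cluster as a single enclosing ball gives capacity $\sim R^N$, whereas a naive sum of the individual hole capacities would give $O(1)$ (the Brinkman contribution). Verifying the disjointness of the $2C_k^N$, the inclusion of all holes inside $C_k^N$, and the control of the Bogovskii remainder is the main technical difficulty.

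\textbf{Step 3: Passing to the limit.} Since both $u^N$ and $w^N$ vanish on the holes, the weak formulation of \eqref{eq_stokesN_ccl}--\eqref{cab_stokesN_ccl} reads
\[
\int_\Omega \nabla u^N : \nabla w^N \, dx = \int_\Omega f \cdot w^N \, dx.
\]
Weak-strong pairing (weak $u^N$, strong $w^N$) gives $\int_\Omega \nabla \bar u : \nabla w \, dx = \int_\Omega f \cdot w \, dx$ for every divergence-free $w \in C^{\infty}_c(\Omega)$. De Rham's theorem then produces $\bar p \in L^2(\Omega)$ with $-\Delta \bar u + \nabla \bar p = f$ in the sense of distributions, and the well-posedness of this Stokes problem in $H^1_0(\Omega)$ implies that the full sequence converges.
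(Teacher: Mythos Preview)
Your proof is correct and follows essentially the same route as the paper's: enclose each cluster in a ball of radius $R^N \sim (P^N)^{1/3} d_m^N$, truncate the test function there with a Bogovskii correction to preserve the divergence-free constraint, and use \eqref{eq_geom3} to show the corrector vanishes in $H^1_0$. The only cosmetic differences are that the paper works with $\ell^\infty$-balls and the fixed truncation family $\chi(\cdot/r^N)$ with $r^N = (|P^N|^{1/3}+1)d_m^N$, and obtains the uniform $H^1$ bound by referring to the variational construction of Section~\ref{sec_uniform} rather than testing directly against $u^N$ (your shortcut is legitimate here because the boundary data vanish).
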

\begin{proof}
First, as $d_{m}^N >> 1/N,$ we may reproduce the arguments in Section \ref{sec_uniform} to obtain that $u^N$ is bounded in $H^1_0(\Omega).$ 
We have thus a weak-cluster point in the weak-topology. We then show that any cluster point of the sequence $u^N$ for the weak topology
of $H^1_0(\Omega)$ is the above $\bar{u}.$

To prove this latter property, we introduce $r^N := (|P^N|^{\frac 13} +1)d_m^N.$ Then, given a divergence-free $w \in C^{\infty}_c(\Omega)$
we set :
$$
\bar{w}^N = w - \left[ \sum_{k \in \Kappa^N} \chi\left(\dfrac{(x-x_k^N)}{r^N}\right)w - \mathfrak B_{x^N_k,r^N,2r^N}\left[ x \mapsto \nabla \chi\left(\dfrac{(x-x^N_k)}{r^N}\right) \cdot w(x)\right]\right]
$$
As all the holes are contained in the $B_{\infty}(x_k^N,r^N)$ for $k \in \Kappa^N$, we have that $\bar{w}^N \in H^1_0(\mathcal F^N)$ and is divergence-free. Because $u^N$ is a solution to the Stokes system in $\mathcal F^N$ 
we obtain then that:
$$
\int_{\Omega} \nabla u^N : \nabla \bar{w}^N  = \int_{\Omega} f \cdot \bar{w}^N.
$$
Let denote 
$$
\delta_{w}^N = \left[ \sum_{k \in \Kappa^N}  \chi\left(\dfrac{(x-x_k^N)}{r^N} \right)w - \mathfrak B_{x^N_k,r^N,2r^N}\left[ x \mapsto \nabla \chi\left(\dfrac{(x-x^N_k)}{r^N}\right) \cdot w(x)\right]\right]
$$ 
Remarking that the vector-fields in the sum have disjoint supports (see \eqref{eq_geom2}) and applying the properties of the Bogovskii operator of the appendix together with the fact that $\sharp \Kappa^N \lesssim N/P^N,$ we obtain:
$$
\|\delta_w^N\|^2_{H^1_0(\Omega)} \lesssim \sharp \Kappa^N r^N {\|w\|^2_{W^{1,\infty}}} \lesssim \dfrac{Nd_{m}^N}{|P^N|^{\frac 23}} {\|w\|^2_{W^{1,\infty}}}
$$
Thanks to assumption \eqref{eq_geom3}, we have  that $\delta_w^N$ converges strongly to $0$ in $H^1_0(\Omega)$ so that:
$$
\lim_{N \to \infty} \int_{\Omega} \nabla u^N : \nabla \bar{w}^N = \int_{\Omega} \nabla \bar{u} : \nabla \bar{w} \qquad \lim_{N\to \infty} \int_{\Omega} f \cdot \bar{w}^N = \int_{\Omega} f \cdot {w}\,.
$$
This ends the proof.
\end{proof}      
 
To conclude, with this example, we obtain here that  homogenizing
the Stokes problem in perforated domain does not yield what is expected from the first part of the article when \eqref{eq_geom3}
holds true {\em i.e.}:
\begin{equation} \label{eq_opt_SB}
P^N >> (Nd_{m}^N)^{\frac 32}. 
\end{equation}
On the other hand, it is clear that the configurations of this example satisfy assumption \eqref{eq_ass_concentration} with $\lambda^N = |P^N/N|^{1/3}.$ We obtain thus that we might not assume only \eqref{eq_ass_concentration} in order to prove convergence to a Stokes-Brinkman problem. A bound above for $\lambda^N$ such as \eqref{eq_ass_lambdaN} is mandatory. 

\medskip

However, in terms of $\lambda^N,$ we remark that the counter-example above shows that we may not expect convergence to the Stoke-Brinkman problem when:
$$
\lambda^N >> {N^{1/6}}{\sqrt{d_{min}^N }}.
$$ 
When $d_{min}^N$ decays like $1/N$ the bound below on the right-hand side becomes comparable to $|d_{min}^N|^{1/3}.$
Thus, the condition $\lambda^N \lesssim |d_{min}^{N}|^{1/3}$ appearing in \eqref{eq_ass_lambdaN} seems necessary.
Nevertheless, in \eqref{eq_ass_lambdaN} the condition $\lambda^N << 1/N^{1/6}$ also appears. This condition prevails
when $d_{min}^N << 1/\sqrt{N}.$ So, our counter-example does not show the optimality of \eqref{eq_ass_concentration}. This latter
restriction  $\lambda^N << 1/N^{1/6}$ comes from the third step in the proof of Proposition \ref{prop_decomposition} where we estimate the cost
of the deletion process. We found no obvious example to show that this condition is also necessary.

\appendix

\section{Auxiliary technical lemmas} \label{app_aux}

We recall here several standard lemmas that help in the above proofs.

\medskip

First, we recall the Poincar\'e-Wirtinger inequality \cite[Theorem II.5.4]{Galdi} which states that for arbitrary lipschitz domain $\mathcal F$, there holds:
$$
\left\| u -  \dfrac{1}{|\mathcal F|}\int_{\mathcal F} u(x){\rm d}x\right\|_{L^2(\mathcal F)} \leq C_{PW} \| \nabla u\|_{L^2(\mathcal F)}\,.
$$ 
We extensively use this inequality when $\mathcal F$ is an annulus. In this case, a standard scaling argument entails the following remark on the constant $C_{PW}$:
\begin{lemma}  \label{lem_PW}
Given $(x,\lambda,a) \in \mathbb R^3 \times (0,\infty) \times (0,1)$ there exists a constant $C_a$ depending
{only} on $a$ (and expecially not on $(x,\lambda)$) for which :
$$
\left\| u -  \dfrac{1}{|A(x,a\lambda,\lambda)|}\int_{A(x,a\lambda,\lambda)} u(y){\rm d}y\right\|_{L^2(A(x,a\lambda,\lambda))} \leq C_a\lambda \| \nabla u\|_{L^2(A(x,a\lambda,\lambda))}\,.
$$ 
\end{lemma}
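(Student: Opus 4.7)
The plan is a straightforward scaling/translation argument: I would reduce everything to the model annulus $A(0,a,1)$ and then check that the change of variables produces exactly one factor of $\lambda$ on the right-hand side.

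First I would record that $A(0,a,1) = B_\infty(0,1) \setminus \overline{B_\infty(0,a)}$ is a fixed Lipschitz domain, depending only on $a \in (0,1)$, so the classical Poincar\'e-Wirtinger inequality \cite[Theorem II.5.4]{Galdi} applied to this reference domain yields a constant $C_a$ such that
\[
\left\| v - \frac{1}{|A(0,a,1)|}\int_{A(0,a,1)} v\right\|_{L^2(A(0,a,1))} \leq C_a \|\nabla v\|_{L^2(A(0,a,1))}
\]
for every $v \in H^1(A(0,a,1))$. No dependence on $x$ or $\lambda$ can enter at this stage since the model annulus does not see them.

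Next, given $(x,\lambda,a)$ as in the statement and $u \in H^1(A(x,a\lambda,\lambda))$, I would introduce the rescaled and translated function $v(y) := u(x+\lambda y)$ for $y \in A(0,a,1)$. The map $y \mapsto x + \lambda y$ is a bijection from $A(0,a,1)$ onto $A(x,a\lambda,\lambda)$, so by the change of variables formula
\[
\int_{A(0,a,1)} v(y)\,\mathrm{d}y = \lambda^{-3} \int_{A(x,a\lambda,\lambda)} u(z)\,\mathrm{d}z,
\]
which combined with $|A(x,a\lambda,\lambda)| = \lambda^3 |A(0,a,1)|$ shows that the mean of $v$ on $A(0,a,1)$ coincides with the mean $\bar u$ of $u$ on $A(x,a\lambda,\lambda)$. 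Similarly,
\[
\|v - \bar u\|_{L^2(A(0,a,1))}^2 = \lambda^{-3} \|u - \bar u\|_{L^2(A(x,a\lambda,\lambda))}^2,
\]
and since $\nabla v(y) = \lambda\, \nabla u(x+\lambda y)$,
\[
\|\nabla v\|_{L^2(A(0,a,1))}^2 = \lambda^{2-3} \|\nabla u\|_{L^2(A(x,a\lambda,\lambda))}^2.
\]

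Finally, I would plug $v$ into the model Poincar\'e-Wirtinger inequality, multiply through by $\lambda^{3/2}$, and read off
\[
\|u - \bar u\|_{L^2(A(x,a\lambda,\lambda))} \leq C_a\, \lambda\, \|\nabla u\|_{L^2(A(x,a\lambda,\lambda))},
\]
which is the claim. There is no genuine obstacle here: the whole content of the lemma is that the translation $x$ is absorbed by a trivial change of variables and that the scaling parameter $\lambda$ contributes exactly one power, due to the homogeneity degree $1$ difference between $\|u-\bar u\|_{L^2}$ and $\|\nabla u\|_{L^2}$ under the dilation $y \mapsto \lambda y$ in $\mathbb{R}^3$. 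The constant $C_a$ inherited from the reference annulus does not depend on $(x,\lambda)$, which is precisely the point of the lemma and the reason it can be invoked with uniform constants for all the cubes $T^N_\kappa$ in the main proof.
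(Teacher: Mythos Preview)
Your proof is correct and is exactly the ``standard scaling argument'' the paper invokes without detailing: apply the Poincar\'e--Wirtinger inequality on the fixed reference annulus $A(0,a,1)$ and transport it to $A(x,a\lambda,\lambda)$ via the change of variables $z = x + \lambda y$. The paper does not give any further proof beyond naming this scaling argument, so your write-up simply fills in the details faithfully.
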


\vskip 12pt

Second, we focus on the properties of the Bogovskii operators $\mathfrak B.$ This means we are interested in solving the divergence problem:
\begin{equation} \label{eq_divergence}
{\rm div} \, v = f\,, \quad \text{ on $\mathcal F\,,$}
\end{equation}
whose data is $f$ and unknown is $v.$ We recall the result due to M.E. Bogovskii (see \cite[Theorem III.3.1]{Galdi}):
\begin{lemma} 
Let $\mathcal F$ be a lipschitz bounded domain in  $\mathbb R^3.$
Given $f \in L^2(\mathcal F)$ such that 
$$
\int_{\mathcal F} f(x){\rm d}x = 0\,,
$$
there exists a solution $v := \mathfrak B_{\mathcal F}[f]  \in H^1_{0}(\mathcal F)$ to \eqref{eq_divergence} 
such that
$$
\|\nabla v\|_{L^2(\mathcal F)} \leq C \|f\|_{L^2(\mathcal F)}
$$
with a constant $C$ depending only on $\mathcal F.$ 
\end{lemma}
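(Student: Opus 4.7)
The plan is to follow the classical strategy due to Bogovskii, which proceeds in three stages: reduce the general Lipschitz case to a star-shaped setting, build an explicit right-inverse of the divergence on a star-shaped domain, and finally establish its $L^2 \to H^1_0$ continuity via Calder\'on--Zygmund theory. The lemma is the standard one, so I will only sketch the structural steps.

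First I would handle the star-shaped case, which is the technical heart of the proof. Assume $\mathcal F$ is star-shaped with respect to some ball $B \Subset \mathcal F$, and fix a cutoff $\omega \in C^\infty_c(B)$ with $\int \omega = 1$. For $f \in L^2(\mathcal F)$ of zero mean, extended by $0$ outside $\mathcal F$, define explicitly
$$
v(x) = \int_{\mathbb R^3} K(x,y) f(y) \, \mathrm{d}y,
\qquad
K(x,y) := \frac{x-y}{|x-y|^3} \int_{|x-y|}^{\infty} \omega\!\left( y + r\, \tfrac{x-y}{|x-y|}\right) r^{2}\, \mathrm{d}r.
$$
A direct change of variables shows that each component of $v$ vanishes outside $\mathcal F$ (hence $v \in H^1_0$ after the bound is obtained), and that $\div v = f - \omega \int_{\mathcal F} f = f$ because of the zero-mean condition on $f$. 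This part is a straightforward but delicate calculation using polar coordinates around $y$.

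Next I would derive the $H^1$ estimate. Differentiation of $v$ produces a convolution-type operator with a kernel that splits into a weakly singular (integrable) part plus a principal-value singular integral of Calder\'on--Zygmund type, with smooth angular profile depending on $\omega$. Boundedness of the weakly singular part is elementary by Young's inequality; for the singular part, one verifies the standard size and cancellation conditions and invokes Calder\'on--Zygmund theory to conclude
$$
\|\nabla v\|_{L^2(\mathcal F)} \leq C(\mathcal F) \|f\|_{L^2(\mathcal F)}.
$$
The constant depends on $\omega$ and the geometry of $\mathcal F$ (essentially the ratio $\mathrm{diam}(\mathcal F)/\mathrm{dist}(B,\partial \mathcal F)$ and $\|\omega\|_{L^\infty}$), and this is the step I expect to be the main obstacle, since everything else is algebraic while this relies on nontrivial harmonic analysis.

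Finally I would pass from star-shaped to general Lipschitz domains. By compactness and the Lipschitz assumption, $\mathcal F$ admits a finite open cover $(\mathcal F_k)_{k=1}^K$ by domains each star-shaped with respect to a ball, with $\mathcal F = \bigcup_k \mathcal F_k$. Using a partition of unity subordinate to this cover, write $f = \sum_k f_k$ where each $f_k$ is supported in $\mathcal F_k$. The individual $f_k$ need not have zero mean, so I would first adjust iteratively: replace $f_k$ by $f_k - c_k \omega_k$ for suitable constants $c_k$ and bump functions $\omega_k$ so as to propagate the zero-mean condition down through overlaps while keeping $\sum_k (f_k - c_k\omega_k) = f$ (this uses connectedness of $\mathcal F$). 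Apply the star-shaped construction to each adjusted piece to obtain $v_k \in H^1_0(\mathcal F_k)$ with $\div v_k$ equal to the adjusted piece, extend each $v_k$ by zero, and sum. The result is the desired $v = \mathfrak{B}_{\mathcal F}[f]$, with the norm bound inherited (up to a constant depending on the cover) from the star-shaped estimates.
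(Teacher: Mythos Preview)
Your sketch is correct and follows the classical Bogovskii construction (explicit kernel on star-shaped domains, Calder\'on--Zygmund estimates for $\nabla v$, decomposition of a general Lipschitz domain into finitely many star-shaped pieces with a mean-redistribution argument). This is essentially the proof given in Galdi's monograph.

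However, you should be aware that the paper does not prove this lemma at all: it is stated as a recalled result with a direct citation to \cite[Theorem III.3.1]{Galdi}. So there is no ``paper's own proof'' to compare against; the lemma is imported wholesale from the literature, and the only subsequent contribution in the appendix is the scaling observation (Lemma~\ref{lem_div}) that on annuli $A(x,a\lambda,\lambda)$ the Bogovskii constant depends only on the aspect ratio $a$ and not on $(x,\lambda)$. Your write-up is thus more than what the paper requires, but it is accurate.
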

In the case of annuli, the above result yields the following lemma by a standard scaling argument:
\begin{lemma} \label{lem_div}
Let $(x,\lambda,a) \in \mathbb R^3 \times (0,\infty) \times (0,1).$ 
Given $f \in L^2(A(x,a\lambda,\lambda))$ such that 
$$
\int_{A(x,a\lambda,\lambda)} f(x){\rm d}x = 0\,,
$$
there exists a solution $v:= \mathfrak B_{x,a\lambda,\lambda}[f]  \in H^1_{0}(A(x,a\lambda,\lambda))$ to \eqref{eq_divergence} 
such that
$$
\|\nabla v\|_{L^2(A(x,a\lambda,\lambda))} \leq C_a \|f\|_{L^2(A(x,a\lambda,\lambda))},
$$
with a constant $C_a$ depending only on $a$ (and especially neither on $f$ nor on $(x,\lambda)$) \,.
\end{lemma}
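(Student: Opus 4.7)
The plan is to deduce Lemma \ref{lem_div} from the preceding general Bogovskii lemma by a translation and dilation argument, checking that the constant can be made to depend only on the aspect ratio $a$. First I would fix, once and for all, the \emph{reference annulus} $\mathcal{F}_a := A(0,a,1)$. Since $a \in (0,1)$, this is a bounded Lipschitz domain, so the previous lemma supplies a Bogovskii operator $\mathfrak{B}_{\mathcal{F}_a}$ together with a constant $C_a$ (depending only on $\mathcal{F}_a$, hence only on $a$) such that, for every mean-zero $g \in L^2(\mathcal{F}_a)$, the vector-field $\tilde v = \mathfrak{B}_{\mathcal{F}_a}[g] \in H^1_0(\mathcal{F}_a)$ satisfies $\operatorname{div}\tilde v = g$ and $\|\nabla \tilde v\|_{L^2(\mathcal{F}_a)} \leq C_a \|g\|_{L^2(\mathcal{F}_a)}$.

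Next, given $(x,\lambda) \in \mathbb{R}^3 \times (0,\infty)$ and a mean-zero $f \in L^2(A(x,a\lambda,\lambda))$, I would transport the problem to $\mathcal{F}_a$ through the affine change of variable $y = x + \lambda \tilde y$. Setting $\tilde f(\tilde y) := \lambda\, f(x + \lambda \tilde y)$ produces an element of $L^2(\mathcal{F}_a)$ whose mean vanishes by change of variable, so I may set $\tilde v := \mathfrak{B}_{\mathcal{F}_a}[\tilde f]$ and then define
\begin{equation*}
v(y) := \tilde v\!\left(\frac{y - x}{\lambda}\right), \qquad y \in A(x,a\lambda,\lambda).
\end{equation*}
A direct computation shows $\operatorname{div}_y v(y) = \lambda^{-1}\, \tilde f((y-x)/\lambda) = f(y)$, and since $\tilde v \in H^1_0(\mathcal{F}_a)$ its rescaled version $v$ belongs to $H^1_0(A(x,a\lambda,\lambda))$.

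Finally I would track the scaling of the norms. The $L^2$ scaling yields $\|\tilde f\|_{L^2(\mathcal{F}_a)}^2 = \lambda^{-1}\|f\|_{L^2(A(x,a\lambda,\lambda))}^2$, while the chain rule combined with the Jacobian gives $\|\nabla v\|_{L^2(A(x,a\lambda,\lambda))}^2 = \lambda\, \|\nabla \tilde v\|_{L^2(\mathcal{F}_a)}^2$. Multiplying the estimate on $\mathcal{F}_a$ by the two compensating powers of $\lambda$ eliminates $\lambda$ entirely and yields
\begin{equation*}
\|\nabla v\|_{L^2(A(x,a\lambda,\lambda))} \leq C_a \|f\|_{L^2(A(x,a\lambda,\lambda))},
\end{equation*}
with the \emph{same} constant $C_a$, proving the lemma (and licensing the notation $\mathfrak{B}_{x,a\lambda,\lambda}$). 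There is no real obstacle here: the argument is routine once one has verified that the chosen scaling $\tilde f(\tilde y) = \lambda f(x+\lambda \tilde y)$ produces the correct cancellation between the powers of $\lambda$ coming from $L^2$ dilation and from $\nabla$. The only point worth emphasizing is that translation by $x$ is obviously isometric, so independence of the constant on $x$ is automatic, while independence on $\lambda$ is exactly the content of the computation above.
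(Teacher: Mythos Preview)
Your proposal is correct and is precisely the ``standard scaling argument'' the paper invokes without detail: reduce to the fixed reference annulus $A(0,a,1)$ via translation and dilation, and track the powers of $\lambda$ in the $L^2$ norms to see they cancel. There is nothing to add; your computation of the scaling factors $\lambda^{-1}$ and $\lambda$ is accurate and the cancellation is exactly what makes $C_a$ independent of $(x,\lambda)$.
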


\bigskip

\section{Proof of a covering lemma } \label{app_coveringlemma}
This appendix is devoted to the construction of coverings that are adapted to the empiric measures $S_N.$  
We prove the following general lemma:
\begin{lemma} \label{lem_coveringlemmaapp}
Let  $(d,\lambda) \in \mathbb N^* \times (0,\infty),$ $d \geq 2,$  and $\mu \in \mathcal M_+(\mathbb R^3)$ a positive bounded measure. 
There exists $(T_{\kappa})_{\kappa \in \mathbb Z^3}$ a covering of  $\mathbb R^3$ with disjoint cubes of width $\lambda$  such that 
denoting 
$$
\mathcal C^{\lambda}_{d} := \left\{ x \in \mathbb R^3 \text{ s.t. } \text{dist}\left(x, \bigcup_{\kappa \in \mathbb Z^3} \partial T_{\kappa}\right) < \dfrac{\lambda}{(d+1)}  \right\}
$$
there holds
\begin{equation} \label{eq_couloirapp}
\mu(\mathcal C_{d}^{\lambda}) \leq \dfrac{6}{d} \mu(\mathbb R^3).
\end{equation}
\end{lemma}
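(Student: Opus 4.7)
\medskip

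\textbf{Proof plan for Lemma \ref{lem_coveringlemmaapp}.} My plan is to use a classical averaging (translation) argument: parametrize the family of admissible grids by a single translation vector, show that a uniformly random translation puts each fixed point in the forbidden corridor with small probability, and conclude by a mean-value argument.

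Concretely, for each $t=(t_1,t_2,t_3)\in [0,\lambda)^3$ I consider the tiling
$$
T^t_\kappa := t + \lambda \kappa + [0,\lambda)^3 \,, \quad \kappa \in \mathbb Z^3\,,
$$
which is a disjoint covering of $\mathbb R^3$ by cubes of width $\lambda$. The union of the faces of these cubes is
$$
G_t := \bigcup_{\kappa \in \mathbb Z^3}\partial T^t_\kappa = \bigcup_{i=1}^3\{x \in \mathbb R^3 \text{ s.t. } x_i \in t_i + \lambda \mathbb Z\}\,,
$$
and, since the distance in the statement is the $\ell^\infty$ distance, the associated corridor reads
$$
\mathcal C^{\lambda}_{d}(t) = \Big\{x \in \mathbb R^3 \text{ s.t. } \min_{i=1,2,3} \operatorname{dist}(x_i, t_i + \lambda \mathbb Z) < \frac{\lambda}{d+1}\Big\}\,.
$$

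I would then compute, for every fixed $x \in \mathbb R^3$, the Lebesgue measure of the set of translations $t$ for which $x$ lies in the corridor. For each coordinate $i$, the map $t_i \mapsto \operatorname{dist}(x_i,t_i+\lambda \mathbb Z)$ is $\lambda$-periodic and triangular on $[0,\lambda)$, so the set $\{t_i \in [0,\lambda) \text{ s.t. } \operatorname{dist}(x_i,t_i+\lambda \mathbb Z)< \lambda/(d+1)\}$ has linear measure exactly $2\lambda/(d+1)$. A union bound over the three coordinates then gives
$$
\frac{1}{\lambda^3}\int_{[0,\lambda)^3}\mathbf 1_{\mathcal C^\lambda_d(t)}(x)\,{\rm d}t \leq \frac{6}{d+1} \leq \frac{6}{d}\,.
$$
Integrating this pointwise inequality against $\mu$ and applying Fubini (which is legitimate because $\mu$ is a bounded positive measure) yields
$$
\frac{1}{\lambda^3}\int_{[0,\lambda)^3}\mu(\mathcal C^\lambda_d(t))\,{\rm d}t \leq \frac{6}{d}\,\mu(\mathbb R^3)\,.
$$

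By the mean-value argument, there exists at least one $t^\star \in [0,\lambda)^3$ for which $\mu(\mathcal C^\lambda_d(t^\star)) \leq (6/d)\mu(\mathbb R^3)$, and the corresponding covering $(T^{t^\star}_\kappa)_{\kappa \in \mathbb Z^3}$ satisfies \eqref{eq_couloirapp}. I do not expect a serious obstacle here: the only mildly delicate point is to identify the correct one-dimensional measure $2\lambda/(d+1)$ and to notice that the three-dimensional corridor is really the union (not intersection) of three slabs, so that the union bound is the right tool. All the rest is elementary Fubini.
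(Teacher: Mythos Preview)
Your argument is correct, and the core idea---averaging over translations and applying the pigeonhole principle---is the same as the paper's. The execution differs, however: the paper parametrizes by a \emph{discrete} one-parameter family of $d$ grids, shifted along the diagonal by $\ell(1,1,1)\lambda/d$ for $\ell\in\{0,\dots,d-1\}$, and then argues combinatorially that each ``small cube'' of width $\lambda/d$ lies in at most $6$ of the $d$ corridors (one for each face direction), whence summing the $d$ corridor masses gives at most $6\mu(\mathbb R^3)$. You instead average over the full continuous family $t\in[0,\lambda)^3$ and compute directly, via a union bound and Fubini, the probability that a fixed point lies in the corridor. Your route is a little cleaner, avoids the auxiliary small-cube decomposition, and in fact yields the slightly sharper constant $6/(d+1)$; the paper's discrete version has the mild advantage that the chosen grid lies in an explicit finite list.
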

In Section \ref{sec_prfpart1}, we apply the previous lemma for arbitrary $N\in \mathbb N^*,$ with $\lambda= \lambda^N,$ $\, d = \delta-1$ and
$$
\mu := \dfrac{1}{N}\sum_{i=1}^N  (1+|v_i^N|^2) \delta_{h_i^N}\,.
$$
We obtain a covering $(T_{\kappa}^N)_{\kappa \in \mathbb Z^3}$ satisfying \eqref{eq_couloir}.  Assuming then $\lambda^N \leq [\text{dist}(\text{Supp}(w), \mathbb R^3 \setminus \Omega)/4]$ (this is possible as $\lambda^N \to 0$ ) we obtain that the subcovering $(T_{\kappa}^N)_{\kappa \in \Kappa^N}$  containing only the cubes that intersect $\text{Supp}(w)$ is made of cubes $T_{\kappa}^N$ that are included in the $\lambda^N$-neighborhood of $\text{Supp}(w).$  By direct computations, we obtain then that, for $\kappa \in \Kappa^N,$ the distance between $T_{\kappa}^N$ and $\mathbb R^3 \setminus \Omega$ 
is strictly positive so that $T_{\kappa}^N \subset \Omega.$

\begin{proof}
By a standard scaling argument, it suffices to prove the result for $\lambda=1.$
 Let  $d \geq 2.$ 
First, for arbitrary $k = (k_1,k_2,k_3) \in \mathbb Z^{3}$
we set:
$$
\tilde{T}_{k} = \left[ \dfrac{k_1}{d } , \dfrac{k_1+1}{d }\right[ \times  \left[ \dfrac{k_2}{d } , \dfrac{k_2+1}{d }\right[ \times \left[ \dfrac{k_3}{d } , \dfrac{k_3+1}{d }\right[
$$ 
These cubes with tildas and index $k$ are cubes of width $1/d.$ We call them "small cubes."
It is straightforward that $(\tilde{T}_{k})_{k\in \mathbb Z^3}$ forms a partition of $\mathbb R^3.$  For arbitrary 
$$
\kappa = (k_1,k_2,k_3) + \{0,\ldots,d-1\}^3\,, 
$$
we set then:
$$
T_{\kappa} = \bigcup_{k \in \kappa} \tilde{T}_{k} = \left[\dfrac{k_1}{d }, \dfrac{k_1}{d } +  1{}\right[
\times \left[\dfrac{k_2}{d }, \dfrac{k_2}{d } +  1{} \right[
 \times \left[\dfrac{k_3}{d }, \dfrac{k_3}{d } +  1{}\right[ .
$$
These cubes without tildas and with index $\kappa$ are cubes of width $1.$ We call them "large cubes".
We introduce then the $1/d$-neighborhood of the boundary of this large cube:
$$
[T_{\kappa}]_{d} :=   \bigcup_{k \in \partial \kappa} \tilde{T}_{k}\,.
$$
where
\begin{eqnarray*}
\partial \kappa &=& \left\{ k \in \{k_1,k_1+ d -1 \} \times \{k_2,\dots,k_2+ d -1 \} \times \{k_3,\dots,k_3+ d -1 \}\right\} \\
					&& \; 	\cup \left\{ k \in \{k_1,\dots,k_1+ d -1 \} \times \{k_2,k_2+ d -1 \} \times \{k_3,\dots,k_3+ d -1 \}\right\}\\
					&& \;	\cup \left\{ k \in \{k_1,\dots,k_1+ d -1 \} \times \{k_2,\dots,k_2+ d -1 \} \times \{k_3,k_3+ d -1 \}\right\}
\end{eqnarray*}
(which means taking the small cubes whose indices are in the boundary of $\kappa$).
We remark that we may split $[T_{\kappa}]_{d}$ into 6 subsets corresponding to the top, bottom, left, right, front and back faces
of the cube $T_{\kappa}.$ For instance, the bottom face of $[T_{\kappa}]_{d}$ reads:
$$
\bigcup_{k\in \{k_1,\dots,k_1+ d -1 \} \times \{k_2,\dots,k_2+ d -1 \} \times \{k_3\}} \tilde{T}_k\,. 
$$
For arbitrary $k^{\ell} =  \ell (1,1,1)\,,$ with $\ell \in \{0,\ldots,d-1\}$ we also denote
$$
\Kappa_{\ell}  = \Big\{ \kappa =  (k^{\ell} + \pi  + \{0,\ldots, d -1 \}^3)\,, \quad \pi \in d \mathbb Z^{3}\Big\} 
$$
We emphasize that $\Kappa_{\ell}$ is a set made of sets (corresponding to large cubes).
Any set $\Kappa_{\ell}$ corresponds to a partition of $\mathbb Z^3$ and then to a covering of $\mathbb R^3$ with disjoint large cubes.

\medskip

Given  $\ell \in \{0,\dots,d-1\}$ we consider now
$$
\mathcal C_{d}^{\ell} = \left\{ x \in \mathbb R^3 \text{ s.t. } \text{dist}\left(x, \bigcup_{\kappa \in \Kappa_{\ell}} \partial T_{\kappa}\right) < \dfrac{1}{(d+1)}  \right\}\,.
$$
We remark that, for fixed $\ell$ there holds:
$$
\mathcal C_{d}^{\ell} \subset \bigcup_{\kappa \in \Kappa_{\ell}} [T_{\kappa}]_{d}.
$$
We denote $\partial \Kappa_{\ell}$ the set of indices  $k$ such that $\tilde{T}_{k}$ contributes to this $1/d$-neigborhood, {\em i.e.},
$\partial \Kappa_{\ell} =  \bigcup \left\{ \partial \kappa,\; \kappa \in \Kappa_{\ell}\right\}.$ We have thus:
$$
\mathcal C_{d}^{\ell} \subset \bigcup_{k \in \partial \Kappa_{\ell}} \tilde{T}_k\,.
$$
We can decompose this union of small cubes by regrouping together the cubes that belong to left / right / top / bottom / front / back faces 
of large cubes. For instance, the indices $k$ of small cubes belonging to  bottom faces of large cubes satisfy
$$
k \in   \mathbb Z^2 \times \{\ell  + d\mathbb Z\}\,.
$$
For two different $\ell$ and $\ell'$ in $\{0,\ldots,d-1\}$ the same
index $k$ cannot belong to the bottom faces of two different cubes in the coverings $\Kappa_{\ell}$ and 
$\Kappa_{\ell'}$ of $\mathbb R^3.$ We have the same properties for top / right / left / front / back faces.
Consequently, in the family of coverings $(\Kappa_{\ell})_{\ell \in \{0,\ldots,d-1\}}$ one small cube $\tilde{T}_{k}$
belongs at most once to a top / bottom / right / left / front / back face of a large cube so that: 
\begin{equation} \label{eq_fmtlremark}
\text{ any $k \in \mathbb Z^3$ belongs to at most $6$ different $\partial \Kappa_{\ell}$}\,.
\end{equation}
 
\medskip

Let now introduce the measure $\mu.$ For any $k \in \mathbb Z^3,$ we denote:
$$
\tilde{\mu}_{k} = \mu(\tilde{T}_{k}),
$$
and we consider the sum:
$$
Rem := \sum_{\ell \in \{0,\ldots,d-1\}} \mu(\mathcal C_{d}^{\ell}).
$$
With the previous definitions, we have:
$$
Rem \leq \sum_{\ell \in \{0,\ldots,d-1\}} \sum_{k \in \partial \Kappa_{\ell}} \tilde{\mu}_k.
$$
Because of \eqref{eq_fmtlremark}, we have then that any $k \in \mathbb Z^3$ appears at most $6$ times in this sum. 
Consequently:
$$
Rem  \leq 6 \sum_{k \in \mathbb Z^3} \tilde{\mu}_k \leq 6 \mu(\mathbb R^3)\,. 
$$
The measure $\mu$ being positive and finite, this implies that one of the terms in the sum defining $Rem$
is less than $Rem/d$. In other words,  there exists at least one 
$\ell^0 \in  \{0,\ldots,d-1\}$ such that:
$$
\mu(\mathcal C_{d}^{\ell^0}) \leq  \dfrac{6}{d}\mu(\mathbb R^3)\,.
$$
The covering $(T_{\kappa})_{\kappa \in \Kappa_{\ell^0}}$ is then made of disjoint cubes 
of width $1$  satisfying \eqref{eq_couloirapp}. 
We have obtained the required covering of $\mathbb R^3.$
\end{proof}

\medskip

{\bf Acknowledgement.} The author  would like to thank Laurent Desvillettes, Ayman Moussa, Franck Sueur, Laure Saint-Raymond and Mark Wilkinson for many stimulating discussions on the topic.
The author is partially supported by the ANR projects ANR-13-BS01-0003-01 and ANR-15-CE40-0010.

\medskip

{\bf Conflict of interest.} The author declares that he has no conflict of interest.


\begin{thebibliography}{100}

\bibitem{Allaire}
G.~Allaire.
\newblock Homogenization of the {N}avier-{S}tokes equations in open sets
  perforated with tiny holes. {I}. {A}bstract framework, a volume distribution
  of holes.
\newblock {\em Arch. Rational Mech. Anal.}, 113(3):209--259, 1990.

\bibitem{BonLacMas15}
V.~Bonnaillie-No{\"e}l, C.~Lacave, and N.~Masmoudi.
\newblock Permeability through a perforated domain for the incompressible 2{D}
  {E}uler equations.
\newblock {\em Ann. Inst. H. Poincar\'e Anal. Non Lin\'eaire}, 32(1):159--182,
  2015.


\bibitem{CioranescuMurat}
D.~Cioranescu and F.~Murat.
\newblock Un terme \'etrange venu d'ailleurs.
\newblock In {\em Nonlinear partial differential equations and their
  applications. {C}oll\`ege de {F}rance {S}eminar, {V}ol. {II} ({P}aris,
  1979/1980)}, volume~60 of {\em Res. Notes in Math.}, pages 98--138, 389--390.
  Pitman, Boston, Mass.-London, 1982.

\bibitem{CarHill}
K.~Carrapatoso,  and M.~Hillairet.
\newblock On the derivation of a Stokes-Brinkman problem from Stokes equations
  around a random array of moving spheres.
\newblock arXiv:1804.10498, May 2018.

\bibitem{DGR}
L.~Desvillettes, F.~Golse, and V.~Ricci.
\newblock The mean-field limit for solid particles in a {N}avier-{S}tokes flow.
\newblock {\em J. Stat. Phys.}, 131(5):941--967, 2008.

\bibitem{FeireislYong}
E.~Feireisl and Y.~Lu.
\newblock Homogenization of stationary {N}avier-{S}tokes equations in domains
  with tiny holes.
\newblock {\em J. Math. Fluid Mech.}, 17(2):381--392, 2015.

\bibitem{Guazzelli}
E.~Guazzelli and J.~F. Morris.
\newblock {\em A physical introduction to suspension dynamics}.
\newblock Cambridge Texts in Applied Mathematics. Cambridge University Press,
  Cambridge, 2012.


\bibitem{Galdi}
G.~P. Galdi.
\newblock {\em An introduction to the mathematical theory of the
  {N}avier-{S}tokes equations}.
\newblock Springer Monographs in Mathematics. Springer, New York, second
  edition, 2011.
\newblock Steady-state problems.


\bibitem{HappelBrenner}
J.~Happel and H.~Brenner.
\newblock {\em Low {R}eynolds number hydrodynamics with special applications to
  particulate media}.
\newblock Prentice-Hall Inc., Englewood Cliffs, N.J., 1965.

\bibitem{LacMas16}
C.~Lacave and N.~Masmoudi.
\newblock Impermeability through a perforated domain for the incompressible two  dimensional {E}uler equations.
\newblock {\em Arch. Ration. Mech. Anal.}, 221(3):1117--1160, 2016.


\bibitem{MSH}
M. Hillairet, A. Moussa, F. Sueur
\newblock On the effect of polydispersity and rotation on the Brinkman force induced by a cloud of particles on a viscous incompressible flow
\newblock  arXiv:1705.08628, May 2017


\bibitem{HofVelpp}
H\"ofer, R. \and  J.~J.~L.~Vel\'azquez.
\newblock The method of reflections, homogenization and screening for poisson
  and stokes equations in perforated domains.
\newblock arXiv:1603.06750, March 2016.

\bibitem{Hofpp}
H\"ofer, R.
\newblock Sedimentation of Inertialess Particles in Stokes Flows
\newblock  arXiv:1610.03748, October 2016

\bibitem{NietVel06}
B.~Niethammer and J.~J.~L. Vel\'azquez.
\newblock Screening in interacting particle systems.
\newblock {\em Arch. Ration. Mech. Anal.}, 180(3):493--506, 2006.


\bibitem{OJ}
P.-E.~ Jabin and F. Otto.
\newblock Identification of the dilute regime in particle sedimentation.
\newblock {\em Comm. Math. Phys.}, 250(2):415--432, 2004.

\bibitem{JP}
P.-E. Jabin and B. Perthame.
\newblock Notes on mathematical problems on the dynamics of dispersed particles
  interacting through a fluid.
\newblock In {\em Modeling in applied sciences}, Model. Simul. Sci. Eng.
  Technol., pages 111--147. Birkh\"auser Boston, Boston, MA, 2000.

\bibitem{Rubinstein}
J. Rubinstein.
\newblock On the macroscopic description of slow viscous flow past a random
  array of spheres.
\newblock {\em J. Statist. Phys.}, 44(5-6):849--863, 1986.


\end{thebibliography}
\end{document}